\newtheorem{teorema}{Theorem}
\newtheorem{corolario}[teorema]{Corollary}
\newtheorem{defini}[teorema]{Definition}
\newtheorem{lema}[teorema]{Lemma}
\newtheorem{prop}[teorema]{Proposition}
\newtheorem{obs}[teorema]{Remark}
\newcommand{\whitebox}{$\Box$}
\newenvironment{prova}[1][Proof]{\textbf{#1.} }{\hfill\whitebox\bigskip}
\newenvironment{dem}[1][Proof]{\textbf{#1.} }{\hfill\whitebox\bigskip}
\numberwithin{equation}{section}
\newcommand{\R}{\mathbb{R}}%cria um novo comando de nome \R
\newcommand{\T}{\mathbb{T}}
\newcommand{\supp}{\mathrm{supp}}
\newcommand{\esssup}{\mathrm{esssup}}
\begin{document}

\title[On the Schrödinger-Debye System]{On the Schrödinger-Debye System in Compact Riemannian Manifolds}
\author{ Marcelo Nogueira}
\address{Department of Mathematics, State University of Campinas, 13083-859, Campinas, SP, Brazil}
\email{marcelonogueira19@gmail.com }
\thanks{M. Nogueira was supported by CNPq, Brazil.}
\author{Mahendra Panthee}
\address{Department of Mathematics, State University of Campinas, 13083-859, Campinas, SP,  Brazil}
\email{mpanthee@ime.unicamp.br}
\thanks{M. Panthee was partially supported by CNPq (308131/2017-7) and FAPESP (2016/25864-6) Brazil.}

\keywords{Schrödinger equation,  Schr\"odinger-Debye system, Initial value problem, local   and global well-posedness}
\subjclass[2000]{35Q35, 35Q53}

\begin{abstract}
We consider the initial value problem (IVP) associated to the Schr\"odinger-Debye system posed on a $d$-dimensional compact Riemannian manifold $M$ and  prove local well-posedness result for given data $(u_0, v_0)\in H^s(M)\times (H^s(M)\cap L^{\infty}(M))$ whenever $s>\frac{d}2-\frac12$, $d\geq 2$. For $d=2$, we apply a sharp version of the Gagliardo-Nirenberg inequality in compact manifold to derive an {\em a priori} estimate for the $H^1$-solution and use it to prove  the global well-posedness result in this space.
\end{abstract}

\maketitle

\section{Introduction}

In this work, we  consider the following initial value problem (IVP)  associated to the  Schrödinger-Debye (SD) system
\begin{equation}\label{SDE12}
\begin{cases}
i \partial_{t} u+ \Delta_{g} u = u v , \qquad{ } (t, x)  \in [0, \infty) \times M  \\
 \kappa \mbox{ } \partial_{t} v + v = \lambda |u|^{2}, \\
u(0,x) = u_{0}, \;\;v(0,x) = v_{0}, 
\end{cases}
\end{equation}
  where $M = (M^{d},g)$ is a complete Riemannian manifold of dimension $d \geq 2$, $\Delta_{g}$ denotes the Laplace-Beltrami operator on $M$,  $u=u(t,x)$ is a complex-valued function, $v=v(t,x)$ is a real valued  function, $\lambda = \pm 1$ and $\kappa > 0$ is a constant.  The cases $ \lambda = -1$  and $\lambda = 1$ are known as  focusing and defocusing nonlinearities, respectively.

This system models the propagation of  electromagnetic waves
through a nonresonant medium, whose nonlinear polarization lags behind the induced electric field
(see \cite{NEWELLMOLONEY} for more physical details).
In this context, a non-Euclidean metric would correspond
essentially to a medium with variable optical index (see \cite{GERARD2005EMS} for more details).
 Notice that, in the absence of the delay ($\kappa$ = 0) representing
an instantaneous polarization response, the system $(\ref{SDE12})$ reduces to the cubic nonlinear Schrödinger
 (NLS) equation
\begin{equation}\label{NLS}
\begin{cases}
i \partial_{t} u + \Delta_{g} u = \lambda |u|^{2}u , \qquad{ }  (t, x) \in \mathbb{R} \times M  \\
u(0,x) = u_{0}.
\end{cases}
\end{equation}

 For sufficiently regular data, the mass of the solution $u$ of the SD system $(\ref{SDE12})$ is conserved by the flow. More precisely, we have
\begin{equation}\label{MassConservation}
m(t):=\int_{M}|u(t, x)|^{2} dg = \int_{M}|u_{0}(x)|^{2} dg = m(0) .
\end{equation}

Unlike what happens with the NLS equation \eqref{NLS}
other conservation laws like energy  for the  SD system  \eqref{SDE12}  are not known. This is one of the main differences between the NLS equation and SD system.  However, we can prove (see Proposition \ref{PropertieOfSolution} below) the following relation involving the gradient term 
\begin{equation}\label{PseudoEnergy}
\frac{d}{dt}\left(\int_{M} |\nabla_{g} u(t,x)|_{g}^{2} + \int_{M} |u(t,x)|^{2} v(t,x)\right) = \frac{1}{\kappa} \left( - \int_{M} |u(t,x)|^{2} v(t,x) + \lambda \int_{M} |u(t,x)|^{4}\right),
\end{equation}
where $\nabla_{g}$ and $|\cdot|_{g}$ are respectively the gradient and the norm associated 
with the metric $g$, and $|\cdot|$ denotes the modulus of any complex number. 

Note that the second equation in \eqref{SDE12} is a simple ODE and can be solved explicitly to get
\begin{equation}\label{ODEvsolution}
v(t, x) = e^{-\frac{t}{\kappa}} v_{0}(x) + \frac{\lambda}{\kappa} \int_{0}^{t} e^{-\frac{(t - \ell)}{\kappa}} |u(\ell, x)|^{2} d\ell.
\end{equation}
Using the value of $v$ from \eqref{ODEvsolution}, the system \eqref{SDE12} can be decoupled to obtain an  integro-differential equation
\begin{equation}\label{IntegroDifferentialFormulation}
\begin{cases}
i \partial_{t} u + \Delta_{g} u = \displaystyle  e^{-\frac{t}{\kappa}} u v_{0}(x) + \frac{\lambda}{\kappa} u\int_{0}^{t} e^{-\frac{(t - \ell)}{\kappa}} |u(\ell, x)|^{2} d\ell, \qquad{ }   \\
u(0,x) = u_{0}(x).\\
\end{cases}
\end{equation}

Our interest here is to address the well-posedness issues for the IVP \eqref{SDE12} posed on  $d$-dimensional compact Riemannian manifold $M$ without boundary. For this, we will consider the integro-differential equation \eqref{IntegroDifferentialFormulation} and treat it as an evolution of $u$ from the NLS point of view.  In the following subsection  we review the previous results regarding the local and global well-posedness theory for $(\ref{SDE12})$  when $M = \mathbb{R}^{d}$ or $\mathbb{T}^{d}$. In Subsection 1.2 we describe the new results  obtained in this work.

\subsection{Overview of former results, when $M = \mathbb{R}^{d}$ or $\mathbb{T}^{d}$, $d \geq 1$.}
The well-posedness issues and other qualitative behavior for the IVP \eqref{NLS}  are extensively studied in the literature considering  $M$ as $ \R^d $ or $\T^d$, see for example \cite{GERARD2005EMS,BOURG93A,BOURG93B,GRIL00,KATO87,GIN96} and references therein. When $M$ is a compact manifold without boundary a pioneer work  for the  IVP \eqref{NLS} was carried out in  \cite{BGT} where the authors derived Strichartz estimate with loss to get local well-posedness result. At this point, we also mention the work of Blair et al \cite{BSS2008,BSS2012} where the authors studied similar problem considering compact manifold with boundary as well. There are further improvement and extension of these results  taking in account of  several geometrical considerations on $M$, see \cite{GERARD2005EMS,OIVAN2010,HTW2006} and references therein. 

The IVP \eqref{SDE12} is also widely studied in literature to deal with the well-posedness issues considering $M=\R^d$ or $\T^d$.  As far as we know, the first work that deals with the well-posedness of  the IVP \eqref{SDE12} for  initial data
$(u_{0}, v_{0})\in H^{s}(\mathbb{R}^{d}) \times H^{s}(\mathbb{R}^{d})$, $ d= 1, 2, 3$ was due to 
Bidégaray (\cite{BIDEGARAY1} and \cite{BIDEGARAY2}). More precisely, the author in \cite{BIDEGARAY2} proved the local well-posedness for data with regularity $s>\frac{d}2$. 
This result was obtained by a fixed-point procedure applied to the Duhamel
formulation for the integro-differential equation $(\ref{IntegroDifferentialFormulation})$. The author  also considered the cases $s=1$ and $s=0$  to obtain solution $u$  respectively in $ L^{\infty}([0, T],H^{1}(\mathbb{R}^{d}) )$ and  $C([0, T], L^{2}(\mathbb{R}^{d})) \cap L^{\frac{8}{d}}([0, T], L^{4}(\mathbb{R}^{d}))$ for sufficiently small $T$  using the Strichartz estimates for the unitary group $S(t) = e^{i t \Delta_{\mathbb{R}^{d}}}$ associated to the linear Schrödinger equation.   Later, Corcho and Linares \cite{CORLIN} improved these results by obtaining global well-posedness for $H^1(\R^d)\times H^1(\R^d) $ and $L^2(\R^d)\times L^2(\R^d)$ data. In one dimensional case they  also obtained local well-posedness for data $(u_{0}, v_{0})\in H^{s}(\mathbb{R}^{d}) \times H^{s}(\mathbb{R}^{d})$, $0<s<1$ with  an optimal application of the Strichartz's and Kato's smoothing  estimates.  When $\frac12\leq s\leq 1$, they extended this later result lowering the regularity requirement on $v_0$. Moreover, the authors in \cite{CORLIN} proved persistence property for the evolution of $v$ too.

 In dimensions $2$ and $3$ the local well-posedness result was further improved by Corcho,  Oliveira and Drumond Silva  \cite{CORCHOOLIVEIRASILVA}  using
   Bourgain's spaces for any data $(u_{0}, v_{0}) \in H^{s}(\mathbb{R}^{d}) \times H^{\ell}(\mathbb{R}^{d})$, with $s$ and $\ell$ satisfying
$$
\max \{0, s- 1 \} \leq \ell \leq \min\{2s, s + 1 \}.$$

 It should be noted that the authors in \cite{CORCHOOLIVEIRASILVA} considered  the original system \eqref{SDE12} in an equivalent  formulation 
\begin{equation}\label{SDS2}
\begin{cases}
\displaystyle  u(t,\cdot) = S(t)u_{0} - i \int_{0}^{t} S(t - t') u(t',\cdot) v(t', \cdot) dt',\\
\displaystyle v(t,\cdot) = v_{0} +\frac{\lambda}{\kappa} \int_{0}^{t} \big(|u(t', \cdot)|^{2} - v(t', \cdot)\big)  dt',
\end{cases}
\end{equation}
 and modified arguments used for the Zakharov system in \cite{GINITSUVELO} to adapt in their case to get local well-posedness result.
In the same work, considering $d = 2$, they also derived an \textit{a priori} estimate in the space $H^{1}(\R^d) \times L^{2}(\R^d)$ for the both focusing and
defocusing cases of $(\ref{SDE12})$, which allows them to extend the local solution  to all positive times. Furthermore, it is  commented that a possible
blow-up in $H^{1}(\R^d)\times H^{1}(\R^d)$ could occur for $\|\nabla v\|_{L^{2}}$. Recently in \cite{GAMBOACARVAJAL}
global well-posedness  result has been established for
data in $H^{s}(\R^d)\times L^{2}(\R^d)$, $ \frac{2}{3} < s \leq 1$ and for data in $H^{1}(\R^d)\times H^{1}(\R^d)$ $(d = 2)$. 

The first work that deals the well-posedness issues for the IVP \eqref{SDE12}  when $M$ is a compact manifold  was done in  \cite{ARBIETOMATHEUS} considering $M = \mathbb{T}^{d}$, the $d$-dimensional torus. In fact, the authors in \cite{ARBIETOMATHEUS}  considered a general non-linearity of the form $\lambda |u|^{\gamma-1}$ with $\gamma \geq 3$ in the second equation of \eqref{SDE12} and  proved  that the IVP associated to the integro-differential equation
\begin{equation}\label{Int-Diff-2}
\begin{cases}
i \partial_{t} u + \Delta_{\T^d} u = \displaystyle  e^{-\frac{t}{\kappa}} u v_{0}(x) + \frac{\lambda}{\kappa} u\int_{0}^{t} e^{-\frac{(t - \ell)}{\kappa}} |u(\ell, x)|^{\gamma-1} d\ell, \qquad{ }   \\
u(0,x) = u_{0}(x)
\end{cases}
\end{equation} 
is globally well-posed for data  $H^{s}(\mathbb{T}) \times H^{s}(\mathbb{T})$, $s\geq 0$, and for $d\geq 2$,  locally
well-posed in $H^{s}(\T^d) \times H^{s}(\T^d)$ whenever $s \geq0$ and $2 \leq \gamma - 1 < \frac{4}{d - 2s}$. Moreover, for cubic nonlinearity $\gamma = 3$,  they also proved that the IVP (\ref{SDE12}) is globally well-posed in $H^{1}(\T^2) \times H^{1}(\T^2)$.

%%%%%%%%%%%%%%

\subsection{Main results in a general compact Riemannian manifold $(M, g)$, $d \geq 2$.}

  In this subsection, we present some results on local and global well-posedness to the IVP $(\ref{SDE12})$ posed in compact Riemannian manifolds. We will consider the integro-differential equation $(\ref{IntegroDifferentialFormulation})$ and use the
 fixed point argument in a suitable Banach space. For $s > \frac{d}{2}$, one can use  the Sobolev embedding $H^{s}(M) \hookrightarrow L^{\infty}(M)$ and obtain local well-posedness of the IVP  \eqref{IntegroDifferentialFormulation} for given data  $(u_{0}, v_{0}) \in H^{s}(M) \times H^{s}(M)$. Therefore, our interest here is to consider the well-posedness issues for the IVP  \eqref{IntegroDifferentialFormulation} for given data with regularity below $\frac{d}2$. Recall that, if the Sobolev regularity of the given data is below $\frac{d}2$, one generally uses Strichartz inequality to perform contraction mapping principle. However, there is an extra difficulty in our case because the exact analogue of the Euclidean Strichartz
inequality does not hold in the compact Riemannian manifold (in fact, there is a loss of $\frac{1}{p}$ derivatives). Also, the Strichartz-like inequalities holds only  locally in time (see Section $2.2$ and \cite{BGT} for more details). We will follow an approach used in \cite{BGT} to obtain some well-posedness results for the  $(\ref{IntegroDifferentialFormulation})$ imposing some extra condition on the initial data $v_0$. 
As far as we know, the well-posedness results obtained in this work are the first  for the IVP \eqref{SDE12} associated to the  SD system  posed on a general compact manifold of dimension $d \geq 2$.  Before announcing the main results we record definitions of  the $d$-admissible pair and well-posedness of the IVP that  will be used throughout this work.

\begin{defini}
A pair $(p, q)$ is  called $d$-admissible ($d \geq 2$) if
\begin{equation}\label{pqPairCond}
\frac{2}{p} + \frac{d}{q} = \frac{d}{2}
\end{equation}
with $2\leq p, q\leq \infty $ and $(p, q, d) \neq (2, \infty, 2)$.
\end{defini}

\begin{defini}\label{LWPDef}(\cite{GERARD2005EMS})
We say that the IVP $(\ref{IntegroDifferentialFormulation})$ is locally well-posed in the space $H^{s}(M)$ if,
for all bounded subset $B$ of $H^{s}(M)$, there exist $T > 0$ and a Banach space $X_{T}$ continuously embedded
in $C([0, T], H^{s}(M))$ such that:
\begin{enumerate}
\item[$i)$] For any initial data $u_{0} \in B$, $(\ref{IntegroDifferentialFormulation})$ has a unique solution
$u \in X_{T}$ such that $u(0) = u_{0}$.

\item[$ii)$] If $u_{0} \in H^{r}(M)$ for $r > s$, then $u \in C([0, T], H^{r}(M))$.

\item[$iii)$] The map $u_{0} \in B \mapsto u \in X_{T}$ is continuous.
\end{enumerate}

If the properties  $i),ii)$ and $iii)$ above hold true for any time $T > 0$, we say that the IVP $(\ref{IntegroDifferentialFormulation})$ is globally well-posed in $H^{s}(M)$.
\end{defini}

%&&&&&&&&&&&&&&&&&&&&&&&&&&&&&&&&&&&&&&&&&&&&&&&&&&&&&&&&&&&&&&&&&&&&&&&&&&&&&&&&&&&&&&&&&&&&&&&&&&&&&&&&&&&&&&&&&&&
%&&&&&&&&&&&&&&&&&&&&&&&&&&&&&&&&&&&&&&&&&&&&&&&&&&&&&&&&&&&&&&&&&&&&&&&&&&&&&&&&&&&&&&&&&&&&&&&&&&&&&&&&&&&&&&&&&&&&&&&&&

Now we move to state the first main result of this work which deals with  the local well-posedness of the IVP \eqref{SDE12}.
\begin{teorema}\label{TeorIntro5}
Let $M$ be a compact Riemannian manifold of dimension $d \geq2$, $p > 2$ satisfying \eqref{pqPairCond} and 
 $s > \frac{d}{2} - \frac{1}{p}$. Let $v_{0} \in H^{s}(M) \cap L^{\infty}(M)$ be fixed. Then for any and $u_0\in B\subset  H^s(M)$, $B$ bounded,  the IVP \eqref{IntegroDifferentialFormulation} is locally well-posed, i.e., there exist a time
$T = T(\kappa,\|v_{0}\|_{H^{s} \cap L^{\infty}}, \|u_{0}\|_{H^{s}}) > 0,$
and a Banach space
\[X_{T} := C([0,T]; H^{s}(M)) \cap L^{p}([0,T];L^{\infty}(M)),
\]
 such that the conditions i), ii) and iii) of Definition \ref{LWPDef} are satisfied. Moreover, the application $u_{0} \in B \mapsto u \in X_{T}$ is Lipschitz continuous.
\end{teorema}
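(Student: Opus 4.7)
The plan is to apply the Banach contraction principle to the Duhamel map
\begin{equation*}
\Phi(u)(t)=S(t)u_{0}-i\int_{0}^{t}S(t-t')F(u)(t')\,dt',
\end{equation*}
where $S(t)=e^{it\Delta_{g}}$ and $F(u)(t')=e^{-t'/\kappa}u(t')v_{0}+\tfrac{\lambda}{\kappa}u(t')\int_{0}^{t'}e^{-(t'-\ell)/\kappa}|u(\ell)|^{2}\,d\ell$ is the nonlinearity of \eqref{IntegroDifferentialFormulation}, on a closed ball $B_{R}=\{u\in X_{T}:\|u\|_{X_{T}}\le R\}$ endowed with the norm $\|u\|_{X_{T}}=\|u\|_{C_{T}H^{s}}+\|u\|_{L^{p}_{T}L^{\infty}}$. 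For the linear part, the unitarity of $S(t)$ takes care of the $C_{T}H^{s}$ component, while for the $L^{p}_{T}L^{\infty}$ component I would invoke the manifold Strichartz estimate with loss of $1/p$ derivatives from \cite{BGT}: for a $d$-admissible pair $(p,q)$ with $q<\infty$, $\|S(t)u_{0}\|_{L^{p}_{T}L^{q}(M)}\lesssim\|u_{0}\|_{H^{1/p}(M)}$. Composing with the Sobolev embedding $W^{\alpha,q}(M)\hookrightarrow L^{\infty}(M)$ valid for any $\alpha>d/q=d/2-2/p$, one obtains $\|S(t)u_{0}\|_{L^{p}_{T}L^{\infty}}\lesssim\|u_{0}\|_{H^{s}}$ exactly under the hypothesis $s>d/2-1/p$. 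For the inhomogeneous Duhamel term, Minkowski's inequality together with the same pointwise-in-$t'$ estimate gives $\|\int_{0}^{t}S(t-t')g(t')\,dt'\|_{X_{T}}\lesssim\|g\|_{L^{1}_{T}H^{s}}$.

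It then remains to control $\|F(u)\|_{L^{1}_{T}H^{s}}$. Here I would use the Moser/product estimate $\|fh\|_{H^{s}}\lesssim\|f\|_{H^{s}}\|h\|_{L^{\infty}}+\|f\|_{L^{\infty}}\|h\|_{H^{s}}$ (valid on $M$ for $s\ge 0$), applied once to $uv_{0}$ and, twice in succession, to the cubic term $u\cdot\bigl(\int_{0}^{t'}e^{-(t'-\ell)/\kappa}|u(\ell)|^{2}d\ell\bigr)$. Combining these with the Hölder bounds $\|u\|_{L^{1}_{T}L^{\infty}}\le T^{1-1/p}\|u\|_{L^{p}_{T}L^{\infty}}$ and (since $p>2$) $\|u\|_{L^{2}_{T}L^{\infty}}\le T^{1/2-1/p}\|u\|_{L^{p}_{T}L^{\infty}}$, one arrives at a bound of the form
\begin{equation*}
\|F(u)\|_{L^{1}_{T}H^{s}}\lesssim T^{1-1/p}\,\|v_{0}\|_{H^{s}\cap L^{\infty}}\|u\|_{X_{T}}+T^{2-2/p}\|u\|_{X_{T}}^{3}.
\end{equation*}
Together with the linear estimate this yields $\|\Phi(u)\|_{X_{T}}\le C\|u_{0}\|_{H^{s}}+C\bigl(T^{1-1/p}\|v_{0}\|_{H^{s}\cap L^{\infty}}+T^{2-2/p}R^{2}\bigr)R$, and a completely analogous computation for the difference $\Phi(u)-\Phi(\tilde u)$ (where the trilinear difference is split into three terms, each multilinear in factors controlled in $X_{T}$) gives contraction on $B_{R}$ with $R\sim\|u_{0}\|_{H^{s}}$ once $T=T(\kappa,\|v_{0}\|_{H^{s}\cap L^{\infty}},\|u_{0}\|_{H^{s}})$ is chosen small enough. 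Item $ii)$ of Definition \ref{LWPDef} follows by rerunning the same estimates at regularity $r>s$ on the already-constructed interval, using the $L^{p}_{T}L^{\infty}$ bound on $u$ as a fixed coefficient and closing via Gronwall; item $iii)$ and the Lipschitz statement are immediate from the difference version of the contraction inequality.

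The main obstacle is the derivative loss of $1/p$ in the Strichartz estimate on a compact manifold, which dictates the threshold $s>d/2-1/p$ and forces a careful balancing of the admissible pair $(p,q)$ against the Sobolev embedding used to reach $L^{\infty}$: a more generous $q$ (and hence smaller $p$) weakens the embedding loss but enlarges the Strichartz loss, so the pair must be tuned so that $\alpha+1/p$ just exceeds $d/2-1/p$. The $L^{\infty}$ assumption on $v_{0}$ is built into the product estimate for $uv_{0}$: since $u$ is controlled only in $L^{p}_{T}L^{\infty}$ (not in $L^{\infty}_{T}L^{\infty}$), one needs $\|v_{0}\|_{L^{\infty}}$ to multiply the non-integrable-in-time factor $\|u(t)\|_{H^{s}}$, while the $H^{s}$ norm of $v_{0}$ is absorbed against $\|u\|_{L^{\infty}}$ and then integrated using Hölder in time, thereby producing the smallness factor $T^{1-1/p}$.
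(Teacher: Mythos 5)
Your proposal follows essentially the same route as the paper: the same Duhamel/contraction scheme in $X_{T}=C([0,T];H^{s})\cap L^{p}([0,T];L^{\infty})$, the same combination of the Burq--G\'erard--Tzvetkov Strichartz estimate with $1/p$ loss and the Sobolev embedding $W^{\sigma,q}(M)\hookrightarrow L^{\infty}(M)$ with $\sigma=s-\tfrac1p>\tfrac dq$, the same bilinear estimate $\|fg\|_{H^{s}}\lesssim\|f\|_{H^{s}}\|g\|_{L^{\infty}}+\|f\|_{L^{\infty}}\|g\|_{H^{s}}$ with H\"older in time, and the same role for the hypothesis $v_{0}\in L^{\infty}$. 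The only divergence is cosmetic: for item $ii)$ the paper absorbs the $\|u\|_{L^{\infty}_{T}H^{r}}$ term for small time and runs a blow-up-alternative argument on maximal existence times, whereas you close via Gronwall, which works equally well here.
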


As discussed above, to prove this theorem we use the contraction mapping principle on integral formulation as was done in the Euclidean as well as $\T^d$ case. However, in our context, the nonlinear term involving $uv_0$ in the integral formulation \eqref{IntegroDifferentialFormulation} does not behave well when applying  the Strichartz estimate with loss
$$\|e^{it\Delta }u_0\|_{L^{p}(I, L^{q}(M))} \leq C(I) \| u_{0}\|_{H^{\frac{1}{p}}(M)}.$$ 
This fact compelled us to impose an extra condition $v_{0} \in H^{s}(M) \cap L^{\infty}(M)$ on the initial data $v_0$ (see the estimates \eqref{EQ18}, \eqref{EQ19} below). 
%This sort of condition was also used by Bedegaray \cite{BIDEGARAY2}  in the  study of the IVP associated to SD system posed on $\R^n$.
We also show  that this additional condition on the initial data $v_0$ is preserved by the evolution $v$ during the time of existence, see Remark \ref{Vpersistence} below. In this sense, this extra condition on initial data is not so unusual.

We also prove the following persistence property for the solution $v$.
\begin{corolario}\label{v(t)Persistence}
Consider the expression of the function  $v$ given by  \eqref{ODEvsolution}. If $v_{0} \in H^{s} \cap L^{\infty}$ and $u_0\in H^s(M)$,  then
\[ 
v \in  C([0,T]; H^{s}(M)) \cap L^{p}([0,T];L^{\infty}(M)),
\]
where $s,p$ and $T$ are as in Theorem \ref{TeorIntro5}.
\end{corolario}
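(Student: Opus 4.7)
The plan is to use the Duhamel-type decomposition $v=v_1+v_2$, where $v_1(t,x):=e^{-t/\kappa}v_0(x)$ carries the initial data and $v_2(t,x):=\frac{\lambda}{\kappa}\int_0^t e^{-(t-\ell)/\kappa}|u(\ell,x)|^2\,d\ell$ is the inhomogeneous part; here $u\in X_T$ is the solution furnished by Theorem \ref{TeorIntro5}. The piece $v_1$ is essentially free: since $t\mapsto e^{-t/\kappa}$ is smooth and bounded on $[0,T]$ and $v_0\in H^s(M)\cap L^\infty(M)$, one immediately obtains $v_1\in C([0,T];H^s(M)\cap L^\infty(M))$, and in particular $v_1\in C([0,T];H^s(M))\cap L^p([0,T];L^\infty(M))$. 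The work is all in $v_2$.

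For $v_2$ the $L^p([0,T];L^\infty(M))$ bound is the easier of the two. Pulling the $L^\infty_x$ norm inside the integral, using $e^{-(t-\ell)/\kappa}\leq 1$, and applying H\"older in time yields
\[
\|v_2(t)\|_{L^\infty}\leq \frac{1}{\kappa}\int_0^t\|u(\ell)\|_{L^\infty}^2\,d\ell\leq \frac{T^{1-2/p}}{\kappa}\|u\|_{L^p([0,T];L^\infty)}^2,
\]
a uniform bound on $[0,T]$; multiplying by $T^{1/p}$ gives the desired $L^p_tL^\infty_x$ control. For the $H^s$ part I would invoke the standard bilinear product estimate on the compact manifold $M$,
\[
\|fg\|_{H^s(M)}\leq C\bigl(\|f\|_{L^\infty(M)}\|g\|_{H^s(M)}+\|f\|_{H^s(M)}\|g\|_{L^\infty(M)}\bigr),
\]
valid for $s\geq 0$ via the Littlewood--Paley calculus associated with the spectral projectors of $\Delta_g$. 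Applied to $|u(\ell)|^2=u(\ell)\,\overline{u(\ell)}$ this yields $\||u(\ell)|^2\|_{H^s}\lesssim \|u(\ell)\|_{H^s}\|u(\ell)\|_{L^\infty}$, whence Minkowski's inequality gives
\[
\|v_2(t)\|_{H^s}\leq \frac{C}{\kappa}\|u\|_{C([0,T];H^s)}\int_0^T\|u(\ell)\|_{L^\infty}\,d\ell\leq \frac{CT^{1-1/p}}{\kappa}\|u\|_{C([0,T];H^s)}\|u\|_{L^p([0,T];L^\infty)}.
\]

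To upgrade boundedness in $H^s$ to continuity in time, for $0\leq t_1<t_2\leq T$ I would decompose
\[
v_2(t_2)-v_2(t_1)=\frac{\lambda}{\kappa}\int_{t_1}^{t_2}e^{-(t_2-\ell)/\kappa}|u(\ell)|^2\,d\ell+\frac{\lambda}{\kappa}\bigl(e^{-(t_2-t_1)/\kappa}-1\bigr)\int_0^{t_1}e^{-(t_1-\ell)/\kappa}|u(\ell)|^2\,d\ell,
\]
and bound each summand in $H^s$ by the same product estimate; the first term vanishes by absolute continuity of the integral and the second by $e^{-(t_2-t_1)/\kappa}-1\to 0$ as $|t_2-t_1|\to 0$. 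The only real technical ingredient is the bilinear Sobolev product estimate on a general compact Riemannian manifold, but this is by now classical via Bernstein-type inequalities and the Littlewood--Paley decomposition relative to $\Delta_g$; everything else is direct Minkowski and H\"older bookkeeping.
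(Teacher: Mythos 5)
Your proposal is correct and follows essentially the same route as the paper: split $v$ into the free part $e^{-t/\kappa}v_0$ and the Duhamel part, control $\||u(\ell)|^2\|_{H^s}$ by the bilinear product estimate of Lemma \ref{EQ1} together with Minkowski and H\"older in time (using $u\in X_T$), bound $\|v(t)\|_{L^\infty}$ uniformly and integrate to get the $L^p_tL^\infty_x$ claim, and prove continuity in $H^s$ by estimating the time increment directly. The only cosmetic difference is that you propose re-deriving the product estimate via Littlewood--Paley theory, whereas the paper already has it (Lemma \ref{EQ1}, via charts and a partition of unity), and your two-term decomposition of $v_2(t_2)-v_2(t_1)$ is a slightly cleaner bookkeeping of the same limit the paper takes.
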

%&&&&&&&&&&&&&&&&&&&&&&&&&&&&&&&&&&&&&&&&&&&&&&&&&&&&&&&&&&&&&&&&&&&&&&&&&&&&&&&&&&&&&&&&&&&&&&&&&&&&&&&&&&&&&&&&&&&
%&&&&&&&&&&&&&&&&&&&&&&&&&&&&&&&&&&&&&&&&&&&&&&&&&&&&&&&&&&&&&&&&&&&&&&&&&&&&&&&&&&&&&&&&&&&&&&&&&&&&&&&&&&&&&&&&&&&&&&&&&

From  Theorem \ref{TeorIntro5}, we observe that if $d\geq 3$, the regularity requirement on the initial data to obtain local well-posedness is  $s > \frac{d}{2} - \frac{1}{p}>1$. However, for $d=2$, we obtain local well-posedness in $H^{1}(M^d)$ because is this case the regularity requirement is $s > 1- \frac{1}{p}$,  which can go below $1$ for  $p>2$.  This sort of situation  appears in the case of NLS equation as well.  Now the natural question is, whether one can prove the global well-posedness result in $H^1(M^2)$ for the SD system?

Generally, conserved quantities are the main ingredients in the proof the global well-posedness results. In this case, we do not have energy conservation law. As discussed above, we derive a relation involving the gradient of the solution $u(t, \cdot)$ (see Proposition  \ref{PropertieOfSolution} below)  and use a sharp version of the Gagliardo-Nirenberg inequality in the context of compact Riemannian manifold to obtain an {\em a priori} estimate
\[
\|u\|_{L^{\infty}((0, T);H^1(M))}\lesssim \|u_0\|_{H^1(M)}+\|u_0\|_{L^2(M)}\|v_0\|_{L^2(M)}, 
\]
for some $T >0$ (see Proposition \ref{PropAPrioriEstimate} below). We use this {\em a priori} estimate to provide an affirmative answer to the question posed above. More precisely, we prove the following global well-posedness result in dimension $2$.

\begin{teorema}\label{TeorIntro7} 
 Let $u_{0} \in H^{1}(M^{2})$,  $v_{0} \in H^{1+}(M^{2})$ and $u \in X_{T}:= C([0,T], H^{1}(M^{2})) \cap L^{p}([0,T], L^{\infty}(M^{2}))$ be the  local solution to the integro-differential equation \eqref{IntegroDifferentialFormulation} obtained in Theorem \ref{TeorIntro5}. Then $u$ is global solution, that is, $u \in C([0,T), H^{1}(M^{2})) \cap L^{p}([0,T), L^{\infty}(M^{2}))$ for any bounded $T > 0$.
\end{teorema}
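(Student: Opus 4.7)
The plan is a continuation argument hinging on the a priori $H^{1}$ bound of Proposition \ref{PropAPrioriEstimate}. Let $T^{\ast} \in (0,\infty]$ denote the maximal time of existence of $u$ in $X_{T}$, and suppose by contradiction that $T^{\ast} < \infty$. The local existence time in Theorem \ref{TeorIntro5} (applied at regularity $s=1$, which is admissible in dimension two since $1 > 1 - 1/p$ for every $p > 2$) depends only on $\kappa$, $\|u_{0}\|_{H^{1}}$ and $\|v_{0}\|_{H^{1}\cap L^{\infty}}$, so it suffices to establish uniform bounds for $\|u(t)\|_{H^{1}}$ and $\|v(t)\|_{H^{1}\cap L^{\infty}}$ on $[0, T^{\ast})$. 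A final application of the local theorem starting from some $t_{0} < T^{\ast}$ close enough to $T^{\ast}$ would then extend the solution past $T^{\ast}$, contradicting maximality.

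For $u$, I combine mass conservation \eqref{MassConservation} with Proposition \ref{PropAPrioriEstimate} to obtain
\[\sup_{t\in[0,T^{\ast})}\|u(t)\|_{H^{1}(M^{2})} \leq M_{1} := C\bigl(\|u_{0}\|_{H^{1}}, \|u_{0}\|_{L^{2}}\|v_{0}\|_{L^{2}}\bigr).\]
For $v$, I rely on the explicit representation \eqref{ODEvsolution}. In $L^{\infty}$ this gives the pointwise bound
\[\|v(t)\|_{L^{\infty}} \leq \|v_{0}\|_{L^{\infty}} + \frac{1}{\kappa}\int_{0}^{t}\|u(\ell)\|_{L^{\infty}}^{2}\,d\ell,\]
while applying a Moser-type product estimate $\||u|^{2}\|_{H^{1}} \lesssim \|u\|_{L^{\infty}}\|u\|_{H^{1}}$ (valid on compact manifolds via the standard Littlewood--Paley theory used in \cite{BGT}), followed by H\"older's inequality in time, yields an analogous bound for $\|v(t)\|_{H^{1}}$ in terms of $M_{1}$ and the Strichartz-type norm $\|u\|_{L^{p}_{t}L^{\infty}_{x}}$.

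The crux of the proof is then to close the resulting bootstrap: the bound on $\|v(t)\|_{H^{1}\cap L^{\infty}}$ requires control of $\|u\|_{L^{p}_{[0,t]}L^{\infty}}$, whereas Theorem \ref{TeorIntro5} only produces such control on short subintervals, with bounds that depend on $\|v\|_{H^{1}\cap L^{\infty}}$ at their left endpoints. I would handle this by a continuity argument. Fix provisionally $\sup_{[0,t]}\|v\|_{H^{1}\cap L^{\infty}} \leq 2M^{\ast}$ and partition $[0,t]$ into subintervals $I_{j}$ of uniform length $\tau_{\min} = \tau_{\min}(M_{1}, 2M^{\ast}) > 0$ provided by the local theorem; on each subinterval one obtains $\|u\|_{L^{p}_{I_{j}}L^{\infty}} \leq R(M_{1}, 2M^{\ast})$, and summing via H\"older gives $\|u\|_{L^{p}_{[0,t]}L^{\infty}} \leq (t/\tau_{\min})^{1/p}R$. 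Substituting this back into the estimates for $v$ and choosing $M^{\ast}$ sufficiently large relative to $T^{\ast}$, $M_{1}$ and $\|v_{0}\|_{H^{1}\cap L^{\infty}}$ closes the bootstrap on $[0,T^{\ast})$, yielding the desired uniform bounds and completing the contradiction. The main technical obstacle, as indicated, is this simultaneous control of the $u$ and $v$ norms; without it, the local time of existence could in principle shrink to zero as $t \to T^{\ast}$.
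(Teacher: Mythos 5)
Your overall strategy coincides with the paper's: assume a finite maximal time $T^{\ast}$, use Proposition \ref{PropAPrioriEstimate} to bound $\|u(t)\|_{H^{1}}$ near $T^{\ast}$, deduce a uniform lower bound on the local existence time, and extend past $T^{\ast}$ for a contradiction. The execution differs. The paper applies the \emph{a priori} estimate only on a single terminal window $[t_{0},T^{\ast})$ whose admissible length $\min\{1,\kappa/(12C_{A_{opt},B}\|u_{0}\|_{L^{2}}^{2})\}$ is uniform in the starting point by mass conservation \eqref{MassConservation}, restarts the integro-differential equation \eqref{EQ53} once at a single $t_{n_{0}}$, takes the restarted existence time to be $T(R_{0},\|v_{0}\|_{H^{1+}},\kappa)$ with the \emph{original} $v_{0}$, and then glues; no bound on $\|u\|_{L^{p}([0,T^{\ast}),L^{\infty})}$ or on $v$ over the whole interval is ever invoked. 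You instead seek uniform bounds for both $\|u(t)\|_{H^{1}}$ and $\|v(t)\|_{H^{1}\cap L^{\infty}}$ on all of $[0,T^{\ast})$. This is a legitimate alternative, and to your credit it confronts a point the paper elides: the correctly restarted equation carries $v(t_{n_{0}})$, not $v_{0}$, as coefficient (cf.\ the compatibility condition $\tilde v(0)=v(t_{n_{0}})$ after \eqref{Tildev}), so some control of $v$ near $T^{\ast}$ is genuinely needed.

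The difficulty is that the step you yourself identify as the crux is not closed, and as written it need not close. From \eqref{Choice-T2} the subinterval length under the provisional bound $2M^{\ast}$ is $\tau_{\min}\simeq(2M^{\ast}+\kappa^{-1}M_{1}^{2})^{-p/(p-1)}$, so the number of subintervals is $N\simeq T^{\ast}/\tau_{\min}$ and your patching gives $\|u\|_{L^{p}([0,t],L^{\infty})}\lesssim N^{1/p}R$ with $R\lesssim M_{1}$ independent of $M^{\ast}$. Feeding this into \eqref{ODEvsolution} yields $\|v(t)\|_{L^{\infty}}\lesssim\|v_{0}\|_{L^{\infty}}+C(T^{\ast},\kappa,M_{1})\,(M^{\ast})^{2/(p-1)}$, and taking $M^{\ast}$ large defeats the right-hand side only when $2/(p-1)<1$, i.e.\ $p>3$; for $2<p\le 3$ the self-improvement fails. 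In dimension two with $s=1$ you may freely fix $p>3$ (any $p>2$ satisfies $1>1-1/p$), but this choice has to be made and the exponent computed — it is the entire content of the ``main technical obstacle,'' not a detail. Two further inaccuracies: the constant $M_{1}$ cannot depend only on $\|u_{0}\|_{H^{1}}$ and $\|u_{0}\|_{L^{2}}\|v_{0}\|_{L^{2}}$, since Proposition \ref{PropAPrioriEstimate} holds only on intervals of length at most $\min\{1,\kappa/(12C_{A_{opt},B}\|u_{0}\|_{L^{2}}^{2})\}$ and a bound on all of $[0,T^{\ast})$ requires iterating it, producing a constant that grows with $T^{\ast}$ and depends on $\|v(\cdot)\|_{L^{2}}$ at the restart points (which itself feeds back into the iteration); and the product estimate you need for $\|\,|u|^{2}\|_{H^{1}}$ is exactly Lemma \ref{EQ1}, which requires $u(t)\in L^{\infty}$ and therefore again only gives $t$-integrated, not pointwise-in-$t$, control — consistent with Corollary \ref{v(t)Persistence}, which places $v$ in $L^{p}([0,T],L^{\infty})$ rather than $C([0,T],L^{\infty})$.
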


Also, considering the global solution given by Theorem \ref{TeorIntro7} we obtain an \textit{estimative on the growth} to the  $H^{1}$-norm of the solution in the case $\lambda = 1$ and  $v_{0} \geq 0$, showing that the growth is at most exponential, see Subsection \ref{sub-sec4.3} for details.

We finish this section by providing the structure of this article. In Section \ref{sec-2} we introduce function spaces and some preliminary results including Strichartz estimate. In Section \ref{sec-3} we provide a Proof for Theorem~\ref{TeorIntro5}.  Section \ref{sec-4} is devoted to derive an {\em a priori} estimate and to prove Theorem \ref{TeorIntro7}. Finally, in Section \ref{sec-5} we record some concluding remarks and future works. 

\section{ Definitions and  Preliminary results } \label{sec-2}

In this section we introduce some notations and function  spaces which are used throughout this work and establish some properties. Also we derive some preliminary estimates on these spaces.

\subsection{Notation} We start introducing some notations. To make exposition simple we denote the Beltrami-Laplace operator associated to the metric $g$ by $\Delta := \Delta_{g}$ and the gradient by $\nabla:=\nabla_{g}$. We use notation  $A \lesssim B$ to say that there exists a constant $C$ 
such that $A \leq C B$ and $A \simeq B$ to say both $A \lesssim  B$ and $B \lesssim A$.
The notation  $a+$ means $a + \epsilon$ for  $0 < \epsilon \ll 1$.
Given a Banach space $X$, a measurable function
$u: I \subset \mathbb{R} \rightarrow X$, and an exponent $p \in [1, \infty]$, we denote
\[
\|u\|_{L^{p}(I,X)} = \left[ \int_{I} (\|u(t)\|_{X})^{p} dt \right]^{\frac{1}{p}},
\]
if $p \in [1, \infty)$ and $\|u\|_{L^{\infty}(I,X)} = \esssup_{t \in I}\|u(t)\|_{X} $.

\subsection{Sobolev Spaces on Compact Riemannian Manifolds}
Given a $d$-dimensional compact Riemannian manifold $M:=(M^d, g)$, we can consider a finite atlas $\mathcal{A} = (U_{\alpha}, \kappa_{\alpha})_{\alpha =1}^{k}$ and a partition
of unity $(h_{\alpha})_{\alpha = 1}^{k}$ on $M$ subordinate to the finite covering
  $ \{ U_{\alpha} \}_{\alpha = 1}^{k}$, i.e., satisfying
$\supp (h_{\alpha}) \subset U_{\alpha}$, $0 \leq h_{\alpha} \leq 1$ and $\sum_{\alpha} h_{\alpha} = 1$ on $M$. With this setting, we define the Sobolev space  $H^{s}(M) : = W^{s, 2}(M)$  of order $s\geq 0$ as being the completion of the space of smooth functions $\mathcal{D}(M) : = \{f: M \rightarrow \mathbb{C}: f \in C^{\infty}(M) \}$ with respect to the norm
\begin{equation}\label{SobolevNormaHs}
 \|f\|_{H^{s}(M)} = \left(\sum_{\nu} \|(h_{\nu} f) \circ \kappa_{\nu}\|_{H^{s}(\mathbb{R}^{d})}^{2} \right)^{\frac{1}{2}}.
\end{equation}
Note that considering  $\kappa_{\nu} : B(x_{\nu}, \tilde{r}) \subset \mathbb{R}^{d}\rightarrow B(y_{\nu}, r) \subset M $, we have
\[
 (h_{\nu} f) \circ \kappa_{\nu} :B(x_{\nu}, \tilde{r}) \subset \mathbb{R}^{d} \rightarrow \mathbb{C}. 
\] 
 Hence, $\supp (h_{\nu} f) \subset B(y_{\nu}, r) \subset M$, and we have that $(h_{\nu} f) \circ \kappa_{\nu}$ possesses support in 
\[
 \kappa_{\nu}^{-1} (\supp (h_{\nu} f) ) \subset B(x_{\nu}, \tilde{r}) \subset \mathbb{R}^{d}. 
\] 
 Thus, we can assume $(h_{\nu} f) \circ \kappa_{\nu}$  be extended from the corresponding chart on
$\mathbb{R}^{d}$ by zero outside of its support and the norm is calculated in $\mathbb{R}^{d}$ .

Note that, from the theory of Sobolev spaces on the Euclidean space  $\mathbb{R}^{d}$,  the following inequality holds.
\begin{lema}
Let $s>0$ and  $f, g \in L^{\infty}(\mathbb{R}^{d}) \cap H^{s}(\mathbb{R}^{d})$, then
\begin{equation}\label{EQ3}
\| fg \|_{H^{s}(\mathbb{R}^{d})} \leq C ( \| f \|_{H^{s}(\mathbb{R}^{d})} \| g \|_{L^{\infty}(\mathbb{R}^{d})} + \| g \|_{H^{s}(\mathbb{R}^{d})} \| f \|_{L^{\infty}(\mathbb{R}^{d})}).
\end{equation}
\end{lema}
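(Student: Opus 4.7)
The plan is to use the Littlewood-Paley decomposition on $\mathbb{R}^d$ together with Bony's paraproduct calculus, an approach that transparently captures why the two $L^\infty$ norms suffice to absorb the loss coming from the high-frequency interactions. Let $(\Delta_j)_{j\geq -1}$ denote the standard inhomogeneous dyadic blocks and $S_j=\sum_{k<j}\Delta_k$ the low-frequency projections, so that
$$\|u\|_{H^s(\mathbb{R}^d)}^2 \simeq \sum_{j\geq -1} 2^{2js}\|\Delta_j u\|_{L^2}^2.$$

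Using Bony's paraproduct decomposition I would write
$$fg = T_f g + T_g f + R(f,g),$$
with $T_f g:=\sum_{j} S_{j-3}f\cdot \Delta_j g$ and $R(f,g):=\sum_{|j-k|\leq 2}\Delta_j f\cdot \Delta_k g$. Each block $S_{j-3}f\cdot \Delta_j g$ has Fourier support in a dyadic annulus of radius $\sim 2^j$, so by quasi-orthogonality together with the uniform bound $\|S_{j-3}f\|_{L^\infty}\lesssim \|f\|_{L^\infty}$,
$$\|T_f g\|_{H^s}^2 \lesssim \sum_j 2^{2js}\|S_{j-3}f\|_{L^\infty}^2\|\Delta_j g\|_{L^2}^2 \lesssim \|f\|_{L^\infty}^2\|g\|_{H^s}^2.$$
The symmetric estimate $\|T_g f\|_{H^s}\lesssim \|g\|_{L^\infty}\|f\|_{H^s}$ follows in the same way, and together these already produce the two terms on the right-hand side of the claim.

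The remainder $R(f,g)$ is the delicate part: a block $\Delta_j f\cdot \Delta_k g$ with $|j-k|\leq 2$ has Fourier support only in a ball of radius $\sim 2^{j}$ rather than in a thin annulus, so quasi-orthogonality fails directly. To fix this, apply $\Delta_\ell$ to $R(f,g)$; only the indices $j\geq \ell - C$ contribute. Setting $a_\ell := 2^{\ell s}\|\Delta_\ell R(f,g)\|_{L^2}$ and $b_j := 2^{js}\|\Delta_j g\|_{L^2}$, and using $\|\Delta_j f\|_{L^\infty}\lesssim \|f\|_{L^\infty}$, one obtains the convolution-type bound
$$a_\ell \lesssim \|f\|_{L^\infty}\sum_{j\geq \ell - C} 2^{(\ell-j)s}\, b_j.$$
Precisely because $s>0$ the sequence $(2^{ms})_{m\leq C}$ lies in $\ell^1$, so Young's inequality in $\ell^p$ gives $\|a\|_{\ell^2}\lesssim \|f\|_{L^\infty}\|g\|_{H^s}$; an entirely symmetric argument yields $\|a\|_{\ell^2}\lesssim \|g\|_{L^\infty}\|f\|_{H^s}$, and combining these with the paraproduct estimates above finishes the proof.

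The main obstacle is exactly the high-high interaction $R(f,g)$: without the hypothesis $s>0$ the geometric series in $2^{(\ell-j)s}$ ceases to be summable and the $L^\infty\times H^s$-type control breaks down. An equivalent, perhaps more self-contained, route would be to expand the bilinear symbol of $\langle D\rangle^s$ and apply Coifman-Meyer multiplier theory for paraproducts; both approaches are standard in harmonic analysis, but the Bony decomposition makes the role of frequency localization and of the positivity of $s$ most transparent.
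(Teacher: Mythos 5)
Your argument is correct: the paraproduct terms are handled by quasi-orthogonality with the uniform bound $\|S_{j-3}f\|_{L^{\infty}}\lesssim\|f\|_{L^{\infty}}$, and the high--high remainder is correctly reduced to a convolution bound on the sequence level where the hypothesis $s>0$ is exactly what makes the kernel $2^{ms}$, $m\leq C$, summable. The paper does not prove this lemma itself but only cites \cite{ALINHACGERARD}, where the estimate is established by precisely this Littlewood--Paley/Bony paraproduct scheme, so your proposal is essentially a self-contained version of the cited proof rather than a different route.
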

\begin{proof}
See \cite{ALINHACGERARD} pg. 84.
\end{proof}

Now, we use $(\ref{EQ3})$ to obtain an analogous estimate in compact Riemannian manifolds which  will be important in our analysis.
\begin{lema}\label{EQ1}
Let $s>0$ and $f, g \in H^{s}(M) \cap L^{\infty}(M)$, then there exists a constant  $C > 0$ such that
\begin{equation}\label{BilinearHs}
\|fg\|_{H^{s}(M)} \leq  C (\|f\|_{H^{s}(M)} \|g\|_{L^{\infty}(M)} + \|f\|_{L^{\infty}(M)} \|g\|_{H^{s}(M)}).
\end{equation}

\end{lema}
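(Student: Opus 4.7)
The plan is to reduce the estimate on $M$ to the Euclidean tame estimate \eqref{EQ3} via the partition-of-unity definition \eqref{SobolevNormaHs}. For each index $\nu$, introduce an auxiliary smooth cutoff $\tilde h_\nu \in C_c^\infty(U_\nu)$ with $\tilde h_\nu \equiv 1$ on $\supp h_\nu$. Since $h_\nu f$ is supported where $\tilde h_\nu = 1$, we have the identity $h_\nu f g = (h_\nu f)(\tilde h_\nu g)$ as functions on $M$, and pulling back by $\kappa_\nu$ yields the pointwise product in $\mathbb{R}^d$
\[
(h_\nu fg)\circ\kappa_\nu \;=\; \bigl((h_\nu f)\circ\kappa_\nu\bigr)\cdot\bigl((\tilde h_\nu g)\circ\kappa_\nu\bigr),
\]
with both factors compactly supported inside $B(x_\nu,\tilde r)$.

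Next, I would apply the Euclidean inequality \eqref{EQ3} to this product, obtaining
\[
\|(h_\nu fg)\circ\kappa_\nu\|_{H^s(\mathbb{R}^d)} \leq C\bigl( A_\nu\, B_\nu^\infty + A_\nu^\infty\, B_\nu\bigr),
\]
where $A_\nu := \|(h_\nu f)\circ\kappa_\nu\|_{H^s(\mathbb{R}^d)}$, $B_\nu := \|(\tilde h_\nu g)\circ\kappa_\nu\|_{H^s(\mathbb{R}^d)}$, and the $L^\infty$ quantities $A_\nu^\infty, B_\nu^\infty$ are trivially bounded by $\|f\|_{L^\infty(M)}$ and $\|g\|_{L^\infty(M)}$ respectively, since $0\leq h_\nu,\tilde h_\nu\leq 1$. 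The quantity $A_\nu$ is bounded by $\|f\|_{H^s(M)}$ directly from the definition \eqref{SobolevNormaHs}.

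The only delicate point, and the main obstacle, is controlling $B_\nu$ by $\|g\|_{H^s(M)}$, because $\tilde h_\nu$ is not an element of the chosen partition $(h_\alpha)$. I would handle this by writing $\tilde h_\nu g = \sum_\mu \tilde h_\nu h_\mu g$, noting that only finitely many indices $\mu$ contribute (those with $\supp h_\mu \cap \supp \tilde h_\nu \neq \emptyset$). For each such $\mu$, the function $\tilde h_\nu h_\mu g$ is compactly supported in $U_\nu \cap U_\mu$, and by the chain rule applied to the transition diffeomorphism $\kappa_\nu^{-1}\circ\kappa_\mu$ on the compact set $\kappa_\mu^{-1}(U_\nu\cap U_\mu)$, together with the product estimate for multiplication by a fixed smooth compactly supported function, one gets
\[
\|(\tilde h_\nu h_\mu g)\circ\kappa_\nu\|_{H^s(\mathbb{R}^d)} \leq C_{\nu,\mu}\,\|(h_\mu g)\circ\kappa_\mu\|_{H^s(\mathbb{R}^d)}.
\]
Summing over the finitely many relevant $\mu$ gives $B_\nu \leq C\|g\|_{H^s(M)}$ with $C$ depending only on the atlas and the partition.

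Finally, squaring the per-chart bound, using $(a+b)^2 \leq 2(a^2+b^2)$, summing the finite sum over $\nu$, and taking the square root yields
\[
\|fg\|_{H^s(M)} \leq C\bigl(\|f\|_{H^s(M)}\|g\|_{L^\infty(M)} + \|f\|_{L^\infty(M)}\|g\|_{H^s(M)}\bigr),
\]
which is the claimed estimate \eqref{BilinearHs}.
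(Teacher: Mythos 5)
Your proposal follows the same overall route as the paper: localize with the partition of unity, pull back to $\mathbb{R}^{d}$, apply the Euclidean tame estimate \eqref{EQ3} chart by chart, and resum. The one place where you genuinely improve on the paper is the ``delicate point'' you identify, namely bounding the local $H^{s}$ norm of the $g$-factor by $\|g\|_{H^{s}(M)}$. The paper handles this in \eqref{EQ6} by writing $\|g\circ\kappa_{\nu}\|_{H^{s}}\leq\sum_{\sigma}\|(h_{\sigma}g)\circ\kappa_{\nu}\|_{H^{s}}$ and then asserting that this sum collapses to the single term $\|(h_{\nu}g)\circ\kappa_{\nu}\|_{H^{s}}$; that collapse is not literally correct, since the supports of the $h_{\sigma}$ overlap, and the surviving cross terms are measured in the chart $\kappa_{\nu}$ rather than $\kappa_{\sigma}$, so they are not directly the pieces appearing in the definition \eqref{SobolevNormaHs} of $\|g\|_{H^{s}(M)}$. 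Your introduction of the auxiliary cutoff $\tilde h_{\nu}$ (which also makes both factors genuinely compactly supported elements of $H^{s}(\mathbb{R}^{d})$ before invoking \eqref{EQ3}), together with the finite sum over overlapping charts and the transition-map estimate $\|(\tilde h_{\nu}h_{\mu}g)\circ\kappa_{\nu}\|_{H^{s}(\mathbb{R}^{d})}\leq C_{\nu,\mu}\|(h_{\mu}g)\circ\kappa_{\mu}\|_{H^{s}(\mathbb{R}^{d})}$, supplies exactly the justification the paper glosses over, at the standard cost of a constant depending on the (fixed, finite) atlas. The conclusion and the final resummation are the same in both arguments, so your proof is correct and, if anything, more complete than the one in the paper.
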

\begin{proof} Using definition $(\ref{SobolevNormaHs})$, we have
\begin{equation}\label{EQ2}
\|f g\|_{H^{s}(M)} = \left(\sum_{\nu} \|(h_{\nu} f g) \circ \kappa_{\nu}\|_{H^{s}(B(x_{\nu}, \tilde{r}))}^{2} \right)^{\frac{1}{2}}
 = \left(\sum_{\nu} \|(h_{\nu} f)\circ \kappa_{\nu} \cdot g  \circ \kappa_{\nu}  \|_{H^{s}(B(x_{\nu}, \tilde{r}))}^{2} \right)^{\frac{1}{2}}.
\end{equation}

Taking into account the discussion after definition \eqref{SobolevNormaHs}, and  in view of the localization of the support of the function $(h_{\nu} f g) \circ \kappa_{\nu}$, we can replace  
$\mathbb{R}^{d}$ by  $B(x_{\nu}, \tilde{r})$ in the estimate \eqref{BilinearHs}. 
With this consideration, applying $(\ref{EQ3})$ in $(\ref{EQ2})$, it follows that
\begin{equation}\label{EQ4}
\begin{split}
  \|f g\|_{H^{s}(M)} &\leq \left(\sum_{\nu}  \|(h_{\nu} f)\circ \kappa_{\nu}\|_{H^{s}(B(x_{\nu}, \tilde{r}))}^{2} \|g  \circ \kappa_{\nu} \|_{L^{\infty}(B(x_{\nu}, \tilde{r}))}^{2} \right)^{\frac{1}{2}}  \\
  &\qquad + \left( \sum_{\nu} \|(h_{\nu} f)\circ \kappa_{\nu}\|_{L^{\infty}(B(x_{\nu}, \tilde{r}))}^{2} \|g  \circ \kappa_{\nu} \|_{H^{s}(B(x_{\nu}, \tilde{r}))}^{2}\right)^{\frac{1}{2}}.
\end{split}
\end{equation}

Now, using that $\sup_{\nu}\|g  \circ \kappa_{\nu} \|_{L^{\infty}(B(x_{\nu}, \tilde{r}))} = \|g\|_{L^{\infty}(M)}$ in $(\ref{EQ4})$, we obtain
\begin{equation}\label{EQ5}
\begin{split}
  \|f g\|_{H^{s}(M)} &\leq \left(\sum_{\nu}  \|(h_{\nu} f)\circ \kappa_{\nu}\|_{H^{s}(B(x_{\nu}, \tilde{r}))}^{2}  \right)^{1/2} \|g\|_{L^{\infty}(M)}  \\
  & \qquad + \left( \sum_{\nu} \|(h_{\nu} f)\circ \kappa_{\nu}\|_{L^{\infty}(B(x_{\nu}, \tilde{r}))}^{2} \|g  \circ \kappa_{\nu} \|_{H^{s}(B(x_{\nu}, \tilde{r}))}^{2}\right)^{\frac{1}{2}}.
\end{split}
\end{equation}

Using the property of the partition of unity, we have  $\supp(h_{\nu} g) \subset B(y_{\nu}, r)$, which in turn implies
\begin{equation}\label{EQ6}
 \|g  \circ \kappa_{\nu} \|_{H^{s}(B(x_{\nu}, \tilde{r}))}\leq \sum_{\sigma} \|( h_{\sigma} g)  \circ \kappa_{\nu} \|_{H^{s}(B(x_{\nu}, \tilde{r}))} = \|( h_{\nu} g)  \circ \kappa_{\nu} \|_{H^{s}(B(x_{\nu}, \tilde{r}))} \leq \|g\|_{H^{s}(M)}.
\end{equation}

Thus, using that $ \sup_{\nu} \|h_{\nu} f\circ \kappa_{\nu}\|_{L^{\infty}(B(x_{\nu}, \tilde{r}))}  \leq \| f\|_{L^{\infty}(M)}$, $0 \leq h_{\nu}(x) \leq 1$ for all $x \in M$, and the relation $(\ref{EQ6})$ in $(\ref{EQ5})$, one obtains
\begin{equation}
\begin{split}
  \|f g\|_{H^{s}(M)} &\leq \|f\|_{H^{s}(M)} \|g \|_{L^{\infty}(M)}
  + \left( \sum_{\nu} \|h_{\nu}\|^{2}_{L^{\infty}(M)}\| f\|_{L^{\infty}(M)}^{2} \right)^{\frac{1}{2}} \|g \|_{H^{s}(M)}\\
& \leq C (\|f\|_{H^{s}(M)} \|g \|_{L^{\infty}(M)}  + \|f\|_{L^{\infty}(M)} \|g \|_{H^{s}(M)}).
\end{split}
\end{equation}
\end{proof}

 Recall  that, as  the operator $-\Delta$ is self-adjoint and has a purely discrete  spectrum contained in $[0, \infty)$, we can construct spectral multipliers $\varphi(-\Delta)$ for any measurable function $\varphi: [0, \infty) \rightarrow \mathbb{C}$ of at most
polynomial growth, see page 228 in \cite{BKKBOOK}. In particular, for $s\in \R$ we can define fractional powers $(- \Delta)^{s/2}$ and $(1 - \Delta)^{s/2}$, as well as Schrödinger propagators 
$e^{- i t \Delta}$ and Littlewood-Paley type operators
on $- \Delta$. These spectral multipliers commute with each other, and are bounded in $L^{2}$
if their respective  symbols $\varphi$ are bounded (for example, $\|\varphi(-\Delta)\|_{L^{2} \rightarrow L^{2}} \leq \|\varphi\|_{L^{\infty}}$ if $\varphi \in L^{\infty}(\mathbb{R}_{+})$).
We can use these properties of $-\Delta$ described above, for $\sigma>0$ and $q\geq 1$  to define the generalized Sobolev space $W^{\sigma, q}(M)$ of order $\sigma$ based on $L^{q}$, 
with norm
\begin{equation}\label{Wsq}
 \|f\|_{W^{\sigma, q}(M)} : = \|(1 - \Delta)^{\frac{\sigma}{2}}f\|_{L^{q}(M)}.
\end{equation}

Note that $W^{\sigma, 2}(M) = H^{\sigma}(M)$ and the norm defined in \eqref{SobolevNormaHs} is equivalent to the following
\begin{equation}\label{HsNormEquivalences}
\|f\|_{H^{s}(M)}  \approx  \|f\|_{L^{2}(M)} + \|(-\Delta)^{s/2}f\|_{L^{2}(M)} \approx \|(1 - \Delta)^{\frac{s}{2}}f\|_{L^{2}(M)}, \quad s\geq 0.
\end{equation}
For details of this equivalence we refer to   \cite{MIZUTANITZVETKOV} and  \cite{IPESENSON}.  

%We also use another norm  equivalence given in  \cite{MIZUTANITZVETKOV}. More precisely,  the authors in \cite{MIZUTANITZVETKOV} proved that, if  $\varphi \in C_{0}^{\infty}(\mathbb{R})$, $supp \varphi \subset ( 1/4, 4)$, $0 \leq \varphi \leq 1$,
%$\sum_{j = 0}^{\infty} \varphi(2^{-2j} t)=1$ for  $t \geq 1$ is a  $4$-adic partition of unity on $[1, \infty)$, then   the following norm equivalence
%\begin{equation}\label{HsNormEquivalence}
%\|f\|_{H^{s}(M)} \approx \|f\|_{L^{2}(M)} + \Big( \sum_{j =0}^{\infty} 2^{2 sj} \|\varphi(- 2^{-2j} \Delta) f\|_{L^{2}(M)}^{2}\Big)^{1/2}
%\end{equation}
%holds.

\subsection{Strichartz estimates with loss and applications}
In this subsection, we quickly describe the Strichartz estimate with a loss of derivative on compact manifolds obtained in \cite{BGT} and apply that to obtain some useful estimates which will be in the following sections. We start with the following key estimate.

\begin{lema}\label{StrichartzBGT2004}(Strichartz estimate with  loss of derivatives)
Let $(M,g)$ be a compact Riemannian manifold of dimension $d \geq 2$. Then, the solution  $u = e^{it \Delta}u_{0}$ of the linear Schrödinger equation, satisfies, for any finite time interval  $I$, and $d$-admissible pair $(p,q)$ with $q < \infty$,
\begin{equation}\label{StrichartzInequality}
\|u\|_{L^{p}(I, L^{q}(M))} \leq C(I) \| u_{0}\|_{H^{\frac{1}{p}}(M)}.
\end{equation}
\end{lema}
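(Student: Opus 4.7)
The plan is to combine a Littlewood--Paley decomposition adapted to $-\Delta$ with a short-time dispersive estimate at each dyadic frequency, and then to run the Keel--Tao $TT^{\ast}$ argument on each time subinterval. First I would choose a smooth dyadic partition $1 = \varphi_{0}(\lambda) + \sum_{j\geq 1}\varphi(2^{-2j}\lambda)$ on $[0,\infty)$ and define the spectral projectors $P_{j} = \varphi(-2^{-2j}\Delta)$, which are available in this setting because $-\Delta$ is self-adjoint with discrete spectrum. By the equivalence of the $H^{\sigma}(M)$-norm with the Littlewood--Paley square function associated to $-\Delta$, together with the fact that $e^{it\Delta}$ commutes with $P_{j}$ and is unitary on $L^{2}(M)$, it suffices to prove the frequency-localized bound
\begin{equation*}
\|e^{it\Delta}P_{j}u_{0}\|_{L^{p}(I,L^{q}(M))} \lesssim 2^{j/p}\,\|P_{j}u_{0}\|_{L^{2}(M)},
\end{equation*}
uniformly in $j\geq 0$, since the loss $2^{j/p}$ is precisely what the $H^{1/p}$ norm absorbs when we square-sum in $j$ using $q<\infty$ (Mihlin-type boundedness of $P_{j}$ on $L^{q}(M)$); the exclusion $(p,q,d)\neq(2,\infty,2)$ appears exactly here.

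For the frequency-localized bound, at semiclassical scale $h := 2^{-j}$ I would build a WKB/FIO parametrix for $e^{it\Delta}P_{j}$ on each coordinate chart of a sufficiently fine covering of $M$, using the non-stationary phase and stationary phase lemmas to obtain, for the Schwartz kernel $K_{h}(t,x,y)$,
\begin{equation*}
|K_{h}(t,x,y)| \lesssim |t|^{-d/2}, \qquad 0<|t|\leq c\,h,
\end{equation*}
which yields the Euclidean-type dispersive estimate $\|e^{it\Delta}P_{j}f\|_{L^{\infty}(M)}\lesssim |t|^{-d/2}\|f\|_{L^{1}(M)}$ on the short interval $|t|\leq ch$. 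The restriction to $|t|\lesssim h$ is dictated by the time after which bicharacteristics may focus or wrap around $M$, and is exactly what forces the derivative loss. Feeding this short-time dispersive estimate into the abstract $TT^{\ast}$ machinery of Keel--Tao furnishes the full Strichartz inequality on each subinterval $I_{h}\subset I$ of length $|I_{h}|\leq ch$:
\begin{equation*}
\|e^{it\Delta}P_{j}u_{0}\|_{L^{p}(I_{h},L^{q}(M))} \lesssim \|P_{j}u_{0}\|_{L^{2}(M)}.
\end{equation*}

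Finally I would partition the fixed finite interval $I$ into $N \sim |I|\,h^{-1} = |I|\,2^{j}$ subintervals of length $\sim h$, sum the $p$-th powers of the previous estimate and take the $p$-th root, which produces the extra factor $N^{1/p}\sim 2^{j/p}$ and yields the desired frequency-localized bound. The main obstacle is the semiclassical parametrix construction, since on a general compact $(M,g)$ one must work chart by chart, control the error terms $(ih\partial_{t}+h^{2}\Delta_{g})$ applied to the WKB ansatz uniformly in $h$, and iterate the Duhamel correction finitely many times; this is the technical heart of the argument and is exactly what is carried out in \cite{BGT}, whose construction I would follow.
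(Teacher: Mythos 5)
Your proposal is correct and follows exactly the route the paper relies on: it cites \cite{BGT}, whose argument is precisely the Littlewood--Paley localization, the semiclassical WKB parametrix giving the dispersive bound on time scales $|t|\lesssim 2^{-j}$, the Keel--Tao $TT^{\ast}$ machinery on each such subinterval, and the summation over $\sim |I|2^{j}$ subintervals producing the $2^{j/p}$ loss absorbed by the $H^{1/p}$ norm. The paper itself gives no independent proof, only this reference, so your sketch is a faithful expansion of the same argument.
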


 A detailed proof of this lemma can be found in \cite{BGT}. The main ingredients in the proof are the local dispersion estimate  \cite{BGT} and Keel-Tao Lemma  \cite{KEELTAO}.

In what follows, we use the  Strichartz estimate  $(\ref{StrichartzInequality})$  to establish some  linear and nonlinear estimates which we  will apply  to obtain local well-posedness results.

\begin{lema} If $(p,q)$ is a $d$-admissible pair, and $q < \infty$, then
\begin{equation}\label{StrichartzInequality2}
\left \| \int_{0}^{t} e^{i ( t - \tau) \Delta} f(\tau) d\tau  \right \|_{L^{p}([0, T], L^{q}(M))} \leq C_{T} \| f \|_{L^{1}([0, T], H^{\frac{1}{p}}(M))}.
\end{equation}
\end{lema}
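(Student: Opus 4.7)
The plan is to deduce the inhomogeneous estimate from the homogeneous one in Lemma \ref{StrichartzBGT2004} via Minkowski's integral inequality, which suffices because the right-hand side already sits in $L^1_t$ (so no Christ--Kiselev argument is needed). Writing $u(t):=\int_0^t e^{i(t-\tau)\Delta}f(\tau)\,d\tau$ for $t\in[0,T]$, I would first use Minkowski in the $L^q_x$ norm to obtain
\[
\|u(t)\|_{L^q(M)}\leq \int_0^T \mathbf{1}_{[0,t]}(\tau)\,\bigl\|e^{i(t-\tau)\Delta}f(\tau)\bigr\|_{L^q(M)}\,d\tau,
\]
and then Minkowski's integral inequality in $L^p_t([0,T])$ to pull the $\tau$-integral outside:
\[
\|u\|_{L^p([0,T],L^q(M))}\leq \int_0^T \bigl\|\mathbf{1}_{[\tau,T]}(t)\,e^{i(t-\tau)\Delta}f(\tau)\bigr\|_{L^p_t([0,T],L^q(M))}\,d\tau.
\]

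Next, for each fixed $\tau\in[0,T]$, I would perform the change of variable $s=t-\tau$, which turns the inner norm into $\|e^{is\Delta}f(\tau)\|_{L^p_s([0,T-\tau],L^q(M))}$. Since $[0,T-\tau]\subset[0,T]$ is a finite time interval and $(p,q)$ is a $d$-admissible pair with $q<\infty$, I can apply the homogeneous Strichartz estimate with loss \eqref{StrichartzInequality} (treating $f(\tau)$ as a fixed initial datum) to bound this by $C([0,T])\,\|f(\tau)\|_{H^{1/p}(M)}$, with a constant depending only on $T$ and not on $\tau$.

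Putting the two steps together and integrating in $\tau$ yields
\[
\|u\|_{L^p([0,T],L^q(M))}\leq C_T\int_0^T \|f(\tau)\|_{H^{1/p}(M)}\,d\tau = C_T\,\|f\|_{L^1([0,T],H^{1/p}(M))},
\]
which is the stated inequality. There is no real obstacle here: the only subtlety is the time-interval bookkeeping after the change of variables, and the fact that the uniformity of the constant $C([0,T])$ with respect to $\tau$ relies on taking the larger interval $[0,T]$ for all shifted problems, which is harmless since $C(I)$ in \eqref{StrichartzInequality} only worsens as $I$ grows. The reason a straight Minkowski argument works, rather than the more delicate Christ--Kiselev lemma, is precisely that the right-hand side is $L^1_\tau$, matching the dual of $L^\infty_t$; for other exponents one would need an additional extension/restriction argument.
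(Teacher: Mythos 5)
Your proposal is correct and follows essentially the same route as the paper: both apply Minkowski's integral inequality to pull the $\tau$-integration outside the $L^p_t L^q_x$ norm and then invoke the homogeneous Strichartz estimate with loss \eqref{StrichartzInequality} for each fixed $\tau$. Your additional remarks on the change of variables $s=t-\tau$ and on why no Christ--Kiselev argument is needed are accurate but not essential to the argument as given in the paper.
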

\begin{prova} A detailed  proof of this Lemma can be found in  \cite{BGT}, (see Corollary 2.10 there). The main idea in the proof is as follows. Defining $F_{t}(\tau) := \chi_{[0,t]}(\tau) e^{i (t - \tau) \Delta} f(\tau)$ and using the Minkowski's inequality, one can obtain
\begin{equation}
\begin{split}
\left \| \int_{0}^{t} e^{i ( t - \tau) \Delta} f(\tau) d\tau  \right \|_{L^{p}([0, T], L^{q}(M))}  \leq    \int_{0}^{T} \left\| F_{t}(\tau)\right\|_{L^{p}([0, T], L^{q}(M))} d\tau.
\end{split}
\end{equation}

Now, using $(\ref{StrichartzInequality})$, we get
\[
 \int_{0}^{T} \left\| F_{t}(\tau)\right\|_{L^{p}([0, T], L^{q}(M))} d\tau \leq   \int_{0}^{T} \left\| e^{i (t - \tau) \Delta} f(\tau) \right\|_{L^{p}([0, T], L^{q}(M))} d\tau \leq C_{T}  \int_{0}^{T} \left\| f(\tau) \right\|_{H^{\frac{1}{p}}} d\tau,
\]
which proves the lemma.
\end{prova}

\begin{lema}\label{StrichartzInequality3Estimate} Let  $(p, q)$ be a  $d$-admissible pair, and $\sigma := s - \frac{1}{p}$, where $s > \frac{d}{2} - \frac{1}{p}$ ($p > 2$). Then, we have
\begin{equation}\label{StrichartzInequality3}
\| e^{i t \Delta} u_{0} \|_{L^{p}([0, T], W^{\sigma, q}(M))}= \| (1- \Delta)^{\frac{\sigma}{2}} e^{i t \Delta} u_{0} \|_{L^{p}([0, T], L^{q}(M))} \leq C_{T} \|u_{0}\|_{H^{s}(M)}.
\end{equation}
\end{lema}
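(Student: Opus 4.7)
The plan is to reduce the weighted Strichartz bound to the baseline Strichartz estimate with loss (Lemma \ref{StrichartzBGT2004}) by exploiting the commutativity of the spectral multipliers $(1-\Delta)^{\sigma/2}$ and $e^{it\Delta}$, which was noted in the discussion following \eqref{Wsq}.

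First I would justify the equality in the statement. Since $\Delta$ is self-adjoint with purely discrete spectrum, the operators $(1-\Delta)^{\sigma/2}$ and $e^{it\Delta}$ are both spectral multipliers associated with the same operator, hence they commute. By the definition \eqref{Wsq} of $W^{\sigma,q}(M)$ one has
\[
\|e^{it\Delta}u_0\|_{W^{\sigma,q}(M)} = \|(1-\Delta)^{\sigma/2}e^{it\Delta}u_0\|_{L^q(M)} = \|e^{it\Delta}(1-\Delta)^{\sigma/2}u_0\|_{L^q(M)},
\]
so taking $L^p$ in time on $[0,T]$ gives the stated equality.

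Next I would apply Lemma \ref{StrichartzBGT2004} with the new initial datum $w_0 := (1-\Delta)^{\sigma/2} u_0$. Since $(p,q)$ is a $d$-admissible pair with $q<\infty$ (this is forced by $p>2$ in the admissibility relation \eqref{pqPairCond} together with the constraint $(p,q,d)\neq(2,\infty,2)$), the estimate yields
\[
\|e^{it\Delta}w_0\|_{L^p([0,T],L^q(M))} \leq C_T\|w_0\|_{H^{1/p}(M)}.
\]
Finally, using the equivalence \eqref{HsNormEquivalences} and the additivity of fractional powers of $(1-\Delta)$ under composition,
\[
\|w_0\|_{H^{1/p}(M)} \approx \|(1-\Delta)^{1/(2p)}(1-\Delta)^{\sigma/2} u_0\|_{L^2(M)} = \|(1-\Delta)^{(\sigma+1/p)/2}u_0\|_{L^2(M)} \approx \|u_0\|_{H^s(M)},
\]
since $\sigma + \tfrac{1}{p} = s$. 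Combining these two estimates yields the desired bound.

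There is no real obstacle here; the argument is essentially bookkeeping on commuting spectral multipliers, and the only point requiring minor care is checking that the admissibility condition is compatible with the hypotheses ($p>2$ ensures $q<\infty$, so Lemma \ref{StrichartzBGT2004} applies directly) and that the fractional-power calculus used to add exponents is legitimate in this setting, which follows from the functional calculus for the self-adjoint operator $-\Delta$ already invoked in the paper.
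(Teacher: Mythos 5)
Your argument is correct and follows essentially the same route as the paper's proof: commute $(1-\Delta)^{\sigma/2}$ with $e^{it\Delta}$, apply the Strichartz estimate with loss to the datum $(1-\Delta)^{\sigma/2}u_0$, and then use the semigroup property of fractional powers together with \eqref{HsNormEquivalences} to identify $\|(1-\Delta)^{\sigma/2}u_0\|_{H^{1/p}}$ with $\|u_0\|_{H^s}$. The only addition is your explicit check that $p>2$ forces $q<\infty$ in the admissibility relation, which the paper leaves implicit.
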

\begin{dem}
Using the fact that  the operators $(1- \Delta)^{\frac{\sigma}{2}}$ and $ e^{i t \Delta}$ commute, we have
\[
\| (1- \Delta)^{\frac{\sigma}{2}} e^{i t \Delta} u_{0} \|_{L^{p}([0, T], L^{q}(M))} = \| e^{i t \Delta} (1- \Delta)^{\frac{\sigma}{2}}  u_{0} \|_{L^{p}([0, T], L^{q}(M))}.
\]

Next, applying the  Strichartz estimate (\ref{StrichartzInequality}), we get
\[
\| e^{i t \Delta} (1- \Delta)^{\frac{\sigma}{2}}  u_{0} \|_{L^{p}([0, T], L^{q}(M))} \lesssim_{T} \|(1- \Delta)^{\frac{\sigma}{2}}  u_{0}\|_{H^{1/p}(M)}.
\]

By definition, one has
\[
\|(1- \Delta)^{\frac{\sigma}{2}}  u_{0}\|_{H^{1/p}(M)} = \|(1- \Delta)^{\frac{1}{2p}}(1- \Delta)^{\frac{\sigma}{2}}  u_{0}\|_{L^{2}(M)}.
\]

Finally, applying the semigroup property,  $(1- \Delta)^{\alpha}(1- \Delta)^{\beta} = (1- \Delta)^{ \alpha + \beta}$, we obtain
\begin{equation}
\begin{split}
\|(1- \Delta)^{\frac{1}{2p}}(1- \Delta)^{\frac{\sigma}{2}}  u_{0}\|_{L^{2}(M)} &= \|(1- \Delta)^{\frac{1}{2}(\frac{1}{p} + \sigma)}  u_{0}\|_{L^{2}(M)}  \\
& \lesssim \|u_{0}\|_{H^{\frac{1}{p} + \sigma}(M)}
 =\|u_{0}\|_{H^{s}(M)}.
\end{split}
\end{equation}
\end{dem}

\begin{lema}\label{StrichartzInequality4Estimate}
Let $(p,q)$ be a $d$-admissible pair and  $\sigma:= s - \frac{1}{p} > \frac{d}{q}$. Then, we have

\begin{equation}\label{StrichartzInequality4}
\left\|\int_{0}^{t} e^{i (t - \tau) \Delta} F(u(\tau)) d \tau \right\|_{L^{p}([0, T], W^{\sigma,q}(M))} \leq C_{T}   \| F(u)\|_{ L^{1}([0,T];H^{s}(M))},
\end{equation}
where $s > \frac{1}{p} + \frac{d}{q} = \frac{d}{2} - \frac{1}{p}$.
\end{lema}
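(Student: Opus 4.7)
The strategy mirrors the proof of Lemma \ref{StrichartzInequality3Estimate}, except that the starting point is the inhomogeneous Strichartz estimate \eqref{StrichartzInequality2} rather than the homogeneous one \eqref{StrichartzInequality}. The key observation is that $(1-\Delta)^{\sigma/2}$ is a spectral multiplier of $-\Delta$ and therefore commutes with the Schr\"odinger propagator $e^{i(t-\tau)\Delta}$. Thus we can move the fractional derivative inside the Duhamel integral and bring it directly onto the forcing term.

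\textbf{Step 1 (move the derivative inside).} Using the commutativity of $(1-\Delta)^{\sigma/2}$ with $e^{i(t-\tau)\Delta}$ and the definition \eqref{Wsq} of the $W^{\sigma,q}$-norm, write
\[
\left\|\int_{0}^{t} e^{i(t-\tau)\Delta} F(u(\tau))\, d\tau \right\|_{L^{p}([0,T],W^{\sigma,q}(M))}
= \left\|\int_{0}^{t} e^{i(t-\tau)\Delta} (1-\Delta)^{\sigma/2} F(u(\tau))\, d\tau \right\|_{L^{p}([0,T],L^{q}(M))}.
\]

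\textbf{Step 2 (apply the inhomogeneous Strichartz estimate).} Since $(p,q)$ is $d$-admissible with $q<\infty$, the estimate \eqref{StrichartzInequality2} applied to the forcing $(1-\Delta)^{\sigma/2}F(u)$ gives
\[
\left\|\int_{0}^{t} e^{i(t-\tau)\Delta} (1-\Delta)^{\sigma/2} F(u(\tau))\, d\tau \right\|_{L^{p}([0,T],L^{q}(M))}
\leq C_{T}\, \|(1-\Delta)^{\sigma/2} F(u)\|_{L^{1}([0,T],H^{1/p}(M))}.
\]

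\textbf{Step 3 (combine the fractional powers).} Using the $L^{2}$-based characterization \eqref{HsNormEquivalences} of $H^{1/p}$ and the semigroup identity $(1-\Delta)^{\alpha}(1-\Delta)^{\beta}=(1-\Delta)^{\alpha+\beta}$, one has pointwise in $\tau$
\[
\|(1-\Delta)^{\sigma/2} F(u(\tau))\|_{H^{1/p}(M)}
= \|(1-\Delta)^{\frac{1}{2}(\sigma+\frac{1}{p})} F(u(\tau))\|_{L^{2}(M)}
\lesssim \|F(u(\tau))\|_{H^{\sigma+1/p}(M)} = \|F(u(\tau))\|_{H^{s}(M)},
\]
since $\sigma+\frac{1}{p}=s$ by definition. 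Integrating in $\tau\in[0,T]$ yields the claimed bound.

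\textbf{Main obstacle.} There is no substantive difficulty here: the proof is a concatenation of the commutativity of spectral multipliers with $e^{it\Delta}$, the inhomogeneous Strichartz estimate \eqref{StrichartzInequality2}, and the semigroup law. The only point that requires a moment of care is that Lemma \eqref{StrichartzInequality2} is stated with the homogeneous Sobolev index $\frac{1}{p}$ on the forcing; all bookkeeping of indices must respect the identity $\sigma+\frac{1}{p}=s$ and the admissibility condition \eqref{pqPairCond}. The hypothesis $\sigma>\frac{d}{q}$ plays no role in deriving the estimate itself; it is presumably invoked later via the Sobolev embedding $W^{\sigma,q}(M)\hookrightarrow L^{\infty}(M)$ when this lemma is used to control the nonlinear terms in \eqref{IntegroDifferentialFormulation}.
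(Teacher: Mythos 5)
Your proof is correct and is exactly the argument the paper intends: the paper omits the details, saying only that the result ``follows from Lemma \ref{StrichartzInequality3Estimate}'', and your three steps (commuting $(1-\Delta)^{\sigma/2}$ through the Duhamel integral, applying the inhomogeneous Strichartz estimate \eqref{StrichartzInequality2}, and combining the fractional powers via the semigroup law so that $\sigma+\frac{1}{p}=s$) are precisely that adaptation. Your closing remark that the hypothesis $\sigma>\frac{d}{q}$ is not needed for the estimate itself but only for the later Sobolev embedding is also accurate.
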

%===============================================================================================================================================================
%================================================================================================================================================================
\begin{dem}
The proof follows from  the Lemma $\ref{StrichartzInequality3Estimate}$, so we omit details.
\end{dem}

We close this section with  the following technical lemma which  will be used in our analysis later.

\begin{lema}\label{EQ21Star}
Consider the expression
 \begin{equation}\label{EQ21}
I(t) := \left\| f(t) \int_{0}^{t} e^{- \frac{(t - t')}{\kappa}} g(t') h(t') dt' \right\|_{H^{s}}.
\end{equation}
Then, one has
\begin{equation}\label{EQ21-1}
\begin{split}
I(t) &\leq \left\| f(t) \right\|_{H^{s}}    \left\| g\right\|_{L^{p}_{T}L^{\infty}_{x}} \left\| h \right\|_{L^{p}_{T}L^{\infty}_{x}}  + \left\| f(t) \right\|_{L^{\infty}_{x}}  \| h \|_{L_T^{\infty}H^s}   \| g\|_{L_T^pL_x^{\infty}} T^{\gamma_{p}}\\
&\quad +  \left\| f(t) \right\|_{L^{\infty}_{x}}  \| g\|_{L_T^{\infty}H^s} \| h \|_{L_T^pL_x^{\infty}} T^{\gamma_{p}},
\end{split}
\end{equation}
where $\gamma_{p}:= 1 - \frac{1}{p}$ $(p > 2)$. In particular if $\|\cdot\|_{X_{T}} := \|\cdot\|_{L^{\infty}([0, T];H^{s}(M))} + \|\cdot\|_{L^{p}([0, T]; L^{\infty}(M))}$, then
\begin{equation}\label{EQ21a}
I(t) \lesssim \left\| f(t) \right\|_{H^{s}}    \left\| g\right\|_{X_{T}} \left\| h \right\|_{X_{T}} + 2  \left\| f(t) \right\|_{L^{\infty}_{x}}  \| g\|_{X_{T}} \| h \|_{X_{T}} T^{\gamma_{p}}.
\end{equation}

\end{lema}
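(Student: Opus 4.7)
The plan is to first use the bilinear Sobolev estimate of Lemma~\ref{EQ1} to peel off the factor $f(t)$, reducing the problem to bounding the time integral in both $L^\infty_x$ and $H^s$. Explicitly, applying \eqref{BilinearHs} pointwise in $t$ gives
\[
I(t) \le C\Bigl(\|f(t)\|_{H^s}\,\bigl\|J(t)\bigr\|_{L^\infty_x}+\|f(t)\|_{L^\infty_x}\,\bigl\|J(t)\bigr\|_{H^s}\Bigr),
\]
where $J(t):=\int_0^t e^{-(t-t')/\kappa} g(t')h(t')\,dt'$. Thus the task reduces to estimating the two norms of $J(t)$.

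For $\|J(t)\|_{L^\infty_x}$ I would bring the norm inside the integral and then apply H\"older in time with three exponents $(c,p,p)$ satisfying $1/c+2/p=1$, namely $c=p/(p-2)$. This produces
\[
\|J(t)\|_{L^\infty_x}\le \Bigl(\int_0^t e^{-c(t-t')/\kappa}\,dt'\Bigr)^{1/c}\|g\|_{L^p_TL^\infty_x}\|h\|_{L^p_TL^\infty_x}\le C(\kappa,p)\|g\|_{L^p_TL^\infty_x}\|h\|_{L^p_TL^\infty_x},
\]
the key point being that the time integral of the exponential is uniformly bounded in $T$, so no $T$-power appears in this term.

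For $\|J(t)\|_{H^s}$ I would again bring the norm inside via Minkowski and apply Lemma~\ref{EQ1} to $g(t')h(t')$, yielding the two pieces
\[
\int_0^t e^{-(t-t')/\kappa}\|g(t')\|_{H^s}\|h(t')\|_{L^\infty_x}\,dt'\;+\;\int_0^t e^{-(t-t')/\kappa}\|g(t')\|_{L^\infty_x}\|h(t')\|_{H^s}\,dt'.
\]
Here I simply bound the exponential factor by $1$ and use H\"older in time with exponents $(\infty,p',p)$ (more precisely, place one factor in $L^\infty_T$ in time, the other in $L^p_T$, and pay the resulting $L^{p'}([0,T])$ norm of the constant~$1$, which equals $T^{1-1/p}=T^{\gamma_p}$). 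This gives
\[
\|J(t)\|_{H^s}\lesssim T^{\gamma_p}\bigl(\|g\|_{L^\infty_TH^s}\|h\|_{L^p_TL^\infty_x}+\|g\|_{L^p_TL^\infty_x}\|h\|_{L^\infty_TH^s}\bigr).
\]

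Combining the two bounds inside the splitting for $I(t)$ yields \eqref{EQ21-1}. The consolidated form \eqref{EQ21a} then follows immediately by majorizing each individual norm on the right-hand side by $\|\cdot\|_{X_T}=\|\cdot\|_{L^\infty_TH^s}+\|\cdot\|_{L^p_TL^\infty_x}$. I expect no serious obstacle; the only delicate point is the choice of H\"older exponents when estimating $\|J(t)\|_{L^\infty_x}$, where one must use the exponential decay in the kernel to avoid picking up a factor of $T$ in the first term of \eqref{EQ21-1}, since the desired estimate has no such factor there.
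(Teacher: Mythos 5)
Your proof is correct and follows essentially the same route as the paper: apply Lemma \ref{EQ1} to peel off $f(t)$, pass the norms inside the time integral by Minkowski, apply Lemma \ref{EQ1} again to $gh$ for the $H^{s}$ piece, and conclude with H\"older in time to produce the $T^{\gamma_p}$ factors. The only (harmless) difference is in the $L^{\infty}_{x}$ term, where the paper bounds the exponential kernel by $1$ and uses $T\leq 1$ to pass from $L^{2}_{T}$ to $L^{p}_{T}$, whereas you keep the kernel in $L^{p/(p-2)}_{t'}$ to get a bound uniform in $T$; both give \eqref{EQ21-1}.
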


\begin{proof} First, we apply the Lemma $\ref{EQ1}$ to $(\ref{EQ21})$ and obtain
\begin{equation}\label{EQ22}
\begin{split}
I(t) & \lesssim \left\| f(t) \right\|_{H^{s}}   \left\|\int_{0}^{t} e^{- \frac{(t - t')}{\kappa}} g(t') h(t') dt' \right\|_{L^{\infty}_{x}}  
 + \left\| f(t) \right\|_{L^{\infty}_{x}}   \left\|\int_{0}^{t} e^{- \frac{(t - t')}{\kappa}} g(t') h(t') dt' \right\|_{H^{s}}.
\end{split}
\end{equation}

Using Minkowski's inequality for integrals it follows from $(\ref{EQ22})$ that
\begin{equation}\label{EQ23}
\begin{split}
 I(t) &\lesssim \left\| f(t) \right\|_{H^{s}}  \int_{0}^{t} e^{- \frac{(t - t')}{\kappa}} \left\| g(t')\right\|_{L^{\infty}_{x}} \left\| h(t') \right\|_{L^{\infty}_{x}}  dt'\\
& \quad+ \left\| f(t) \right\|_{L^{\infty}_{x}}   \int_{0}^{t} e^{- \frac{(t - t')}{\kappa}} \left\| g(t') h(t') \right\|_{H^{s}} dt'.
\end{split}
\end{equation}

Since $e^{- \frac{(t - t')}{\kappa}} \leq 1$ for $0 \leq t' \leq t < T \leq 1 $, we infer  from $(\ref{EQ23})$ that
\begin{equation}\label{EQ24}
\begin{split}
I(t) & \lesssim \left\| f(t) \right\|_{H^{s}}  \int_{0}^{T}  \left\| g(t')\right\|_{L^{\infty}_{x}} \left\| h(t') \right\|_{L^{\infty}_{x}}  dt'\\
& \quad+ \left\| f(t) \right\|_{L^{\infty}_{x}}  \| h \|_{L^{\infty}_{t}H^{s}} \int_{0}^{T}  \| g(t',\cdot)\|_{L^{\infty}_{x}} dt' +  \left\| f(t) \right\|_{L^{\infty}_{x}}  \| g\|_{L^{\infty}_{t}H^{s}} \int_{0}^{T}\| h(t',\cdot) \|_{L^{\infty}_{x}} dt' .
\end{split}
\end{equation}

Next, noting that $p > 2$ we can apply  Hölder's inequality in $(\ref{EQ24})$, we get
\begin{equation}\label{EQ25}
\begin{split}
I(t) & \lesssim \left\| f(t) \right\|_{H^{s}}    \left\| g\right\|_{L^{2}_{T}L^{\infty}_{x}} \left\| h\right\|_{L^{2}_{T}L^{\infty}_{x}}+ \left\| f(t) \right\|_{L^{\infty}_{x}}  \| h \|_{L^{\infty}_{T}H^{s}}   \| g\|_{L^{p}_{T} L^{\infty}_{x}} T^{\gamma_{p}}  \\
&\quad+  \left\| f(t) \right\|_{L^{\infty}_{x}}  \| g\|_{L^{\infty}_{T}H^{s}} \| h \|_{L^{p}_{T}L^{\infty}_{x}} T^{\gamma_{p}}\\
&\leq \left\| f(t) \right\|_{H^{s}}    \left\| g\right\|_{L^{p}_{T}L^{\infty}_{x}} \left\| h \right\|_{L^{p}_{T}L^{\infty}_{x}}  + \left\| f(t) \right\|_{L^{\infty}_{x}}  \| h \|_{L_T^{\infty}H^s}   \| g\|_{L_T^pL_x^{\infty}} T^{\gamma_{p}}\\
&\quad +  \left\| f(t) \right\|_{L^{\infty}_{x}}  \| g\|_{L_T^{\infty}H^s} \| h \|_{L_T^pL_x^{\infty}} T^{\gamma_{p}},
\end{split}
\end{equation}
which proves \eqref{EQ21-1}. The estimate \eqref{EQ21} follows from \eqref{EQ25}  using  the definition of $\|.\|_{X_{T}}$-norm.
\end{proof}

\section{Integral Formulation and Local Well-Posedness}\label{sec-3}
In this section we prove the local well-posedness result stated  in Theorem \ref{TeorIntro5}. As in the Euclidean case (see \cite{BIDEGARAY2, CORLIN}), here also we consider the integro-differential formulation  \eqref{IntegroDifferentialFormulation} of the IVP \eqref{SDE12}. 

For given $u_{0} \in  H^{s}(M)$ we use the usual Duhamel formula to write \eqref{IntegroDifferentialFormulation} in an equivalent integral formulation 
 \begin{equation}\label{SDS1234}
u(t) = S(t) u_{0} -  i \int_{0}^{t} S(t - \ell) G(\ell) d \ell, \qquad{ }   (t, x) \in [0, \infty) \times  M, \\
\end{equation}
 where $S(t) = e^{it \Delta}$ is the  group associated to the linear Schrödinger equation, and
 \begin{equation}\label{Wintegral}
 G(\ell) :=  e^{- \frac{\ell}{\kappa}} u(\ell) v_{0}(x) +  \frac{\lambda}{\kappa} u(\ell) \int_{0}^{\ell} e^{- \frac{(\ell - t')}{\kappa}} |u(t')|^{2} d t' =: G_{1}(\ell) + G_{2}(\ell).
 \end{equation}
 In order to prove a local existence result, we will use the classical contraction principle in  an appropriate Banach space $X_{T} \subset C([0, T], H^{s}(M))$. In sequel, we use the estimates proved in the previous section to provide details of this argument.

\begin{proof}[Proof of Theorem \ref{TeorIntro5}] Let $d \geq 2$. Assume $s < \frac{d}{2}$ and select $p > 2$ such that $s>\frac{d}{2}-\frac{1}{p}$. Also let  $v_0\in H^s(M)\cap L^{\infty}(M)$ be fixed and  $u_{0} \in  H^{s}(M)$ be given. Let us define a function   space
\[
X_{T} :=  \{ u \in C([0, T]; H^{s}(M)) \cap L^{p}([0, T]; L^{\infty}(M)): \|u\|_{X_{T}} < \infty \},
\] 
where
\begin{equation}\label{XTnorm}
\|u\|_{X_{T}} := \|u\|_{L^{\infty}([0, T];H^{s}(M))} + \|u\|_{L^{p}([0, T]; L^{\infty}(M))}.
 \end{equation}

We have that $X_{T} \subset C([0, T], H^{s}(M))$ is complete. Taking into account the Duhamel's formula \eqref{SDS1234} we define and application $\Phi: X_{T} \rightarrow X_{T}$ by
\begin{equation}\label{Duhamel-1}
\Phi (u)(t) = S(t) u_{0} - i \int_{0}^{t} S(t - \ell) G(\ell) d \ell,
\end{equation}
where $G=G_1+G_2$ is given by \eqref{Wintegral}.
We will show that there exist a  time $T>0$ and $R>0$ such that the application $\Phi$  is a contraction on the ball $B_{T}^{R}\subset X_{T}$, given by
\[
B_{T}^{R} =  \{ u \in X_{T}: \|u\|_{X_{T}} \leq R \}.
\]

Using that $S(t)$ is an isometry in $H^{s}(M)$ and the Minkowski's inequality we get
\begin{equation}\label{EQ10a}
\begin{split}
\|\Phi(u(t))\|_{H^{s}}  &\leq \|S(t)u_{0} \|_{H^{s}} + \left\|\int_{0}^{t} S(t - \ell) G(\ell)
    d\ell \right\|_{H^{s}} \\
&\leq \|u_{0}\|_{H^{s}} + \int_{0}^{t}\|G(\ell)  \|_{H^{s}} d\ell.
\end{split}
\end{equation}
So, we have
\begin{equation}\label{EQ11}
\|\Phi(u) \|_{L^{\infty}_{T}H^{s}}
\leq \|u_{0} \|_{H^{s}} + \|G \|_{L^{1}([0, T]; H^{s}(M))}.
\end{equation}
%===============================================================================================================================================================
%==================================================================================================
In order to bound the term $\|\Phi(u)\|_{L^{p}([0, T], L^{\infty}(M))}$ we use 
the Sobolev embedding 
\[
 W^{\sigma, q}(M) \hookrightarrow L^{\infty}(M),
\]
for  $\sigma:=s-\frac1p > \frac{d}{q}$  with   $(p,q)$ satisfying \eqref{pqPairCond}.  Thus, there exists $C > 0$ such that 
\begin{equation}\label{EQ11.1}
\|\Phi(u)\|_{L^{p}([0, T], L^{\infty}(M))} \leq C \|\Phi(u) \|_{L^{p}([0, T], W^{\sigma, q}(M))} .
\end{equation}
 Now,  taking the $\|\cdot\|_{L^{p}([0,T];W^{\sigma, q})}$-norm in \eqref{Duhamel-1}, one obtains
\begin{equation}\label{EQ12}
\begin{split}
\|\Phi(u)\|_{L^{p}([0,T];W^{\sigma, q})}  & \leq \|S(t) u_{0}\|_{L^{p}([0,T];W^{\sigma, q})}+ \left\|\int_{0}^{t} S(t - \ell) G(\ell) d\ell \right\|_{L^{p}([0,T];W^{\sigma, q})}.
\end{split}
\end{equation}

Applying the estimates $(\ref{StrichartzInequality3})$ and $(\ref{StrichartzInequality4})$ in $(\ref{EQ12})$, it follows that
\begin{equation}\label{EQ13}
\|\Phi(u) \|_{L^{p}([0,T];W^{\sigma, q})}  \lesssim \|u_{0} \|_{H^{s}} + \|G \|_{L^{1}([0, T]; H^{s}(M))}.
\end{equation}

Inserting \eqref{EQ13} in \eqref{EQ11.1}, yields 
\begin{equation}\label{EQ13a}
\|\Phi(u) \|_{L^{p}([0,T];L^{\infty}(M))}  \lesssim \|u_0\|_{H^{s}} + \|G \|_{L^{1}([0, T]; H^{s}(M))}.
\end{equation}

Finally, combining  $(\ref{EQ11})$ and $(\ref{EQ13a})$, we find
\begin{equation}\label{EQ14}
\|\Phi(u) \|_{X_{T}}  \lesssim \|u_{0} \|_{H^{s}} +
  \|G  \|_{L^{1}([0, T]; H^{s}(M))}.
\end{equation}

Now, we move to estimate the second term in the right hand side of $(\ref{EQ14})$. We start noting that
\begin{equation}\label{EQ16}
\begin{split}
  \|G \|_{L^{1}([0, T]; H^{s}(M))}  &\leq  \int_{0}^{T} \|G_{1}(\ell) \|_{H^{s}} d\ell +\int_{0}^{T} \|G_{2}(\ell) \|_{H^{s}} d\ell \\
& \leq   \int_{0}^{T} e^{- \frac{\ell}{\kappa}} \|u v_{0}\|_{H^{s}} d\ell  + \frac{1}{\kappa} \int_{0}^{T}\left\| u(\ell) \int_{0}^{\ell} e^{- \frac{(\ell - t')}{\kappa}} |u(t')|^{2} d t'\right\|_{H^{s}}d\ell \\
& =: I_{A} + \frac{1}{\kappa} I_{B} .
\end{split}
\end{equation}

Now, we will estimate each one of the terms in  $(\ref{EQ16})$ separately.
\begin{enumerate}
\item[$\bullet$] Using  Lemma $\ref{EQ1}$ and   Hölder's inequality, we get
\begin{equation}\label{EQ18}
\begin{split}
I_A
 & \lesssim  \int_{0}^{T} \|u \|_{L^{\infty}}  \|v_{0}\|_{H^{s}} d\ell +  \int_{0}^{T} \|u \|_{H^{s}}  \|v_{0}\|_{L^{\infty}} d\ell \\
& \lesssim   \|v_{0}\|_{H^{s}}  \|u \|_{L_{T}^{p}L_{x}^{\infty}}  T^{1 - \frac{1}{p}} +  \|u \|_{L_{T}^{\infty}H^{s}}    \|v_{0}\|_{L^{\infty}} T.
\end{split}
\end{equation}

Considering $0<T\leq 1$ and  using the definition of the  $\|\cdot\|_{X_{T}}$-norm in $(\ref{EQ18})$, we obtain
\begin{equation}\label{EQ19}
 I_{A}\lesssim 2 \|v_{0}\|_{H^{s} \cap L^{\infty}}  \|u \|_{X_{T}}  T^{\gamma_{p}},
\end{equation}
where  $\gamma_{p} := 1 - \frac{1}{p} > 0$.

\item[$\bullet$] Now, we estimate $I_{B}$. Using Lemma \ref{EQ21Star} with $f=u$, $g=u$ and $h=\bar{u}$, we obtain
\begin{equation}\label{EQ20}
\begin{split}
I_{B}& \lesssim \int_{0}^{T}\left\| u(\ell) \int_{0}^{} e^{- \frac{(\ell- t')}{\kappa}} |u(t')|^{2} d t' \right \|_{H^{s}} d\ell\\
&\lesssim   \|u\|_{X_{T}}^2 \int_{0}^{T}\|{u}(\ell)\|_{H^{s}} d\ell + 2  T^{\gamma_{p}}  \|u\|_{X_{T}}^2  \int_{0}^{T}\|{u}(\ell)\|_{L^{\infty}_{x}} d\ell.
\end{split}
\end{equation}
Using H\"olders' inequality, it follows  that
\begin{equation}\label{EQ26}
\begin{split}
 I_{B} &\lesssim  \|u\|_{X_{T}}^2 \|u\|_{L_T^{\infty}H^s} T+ 2  \|u\|_{X_{T}}^2 \|{u}\|_{L_{T}^pL_x^{\infty}} T^{2 \gamma_{p}}.
\end{split}
\end{equation}

Therefore, considering $0<T\leq 1$, we get
\begin{equation}\label{EQ27}
\frac{1}{\kappa}I_{B} \lesssim  \frac{3}{\kappa} \|u\|_{X_{T}}^{3}  T^{ \gamma_{p}}.
\end{equation}

\end{enumerate}

Thus, in view of estimates  $(\ref{EQ16})$, $(\ref{EQ19})$ and $(\ref{EQ27})$  we obtain from  $(\ref{EQ14})$ that
\begin{equation}\label{EQ28}
\begin{split}
\|\Phi(u)\|_{X_{T}}  & \lesssim \|u_{0}\|_{H^{s}} + 2\|v_{0}\|_{H^{s} \cap L^{\infty}}  \|u \|_{X_{T}}  T^{\gamma_{p}}
+ \frac{3}{\kappa}  \|u \|_{X_{T}}^3 T^{ \gamma_{p}}.
\end{split}
\end{equation}

With  estimate $(\ref{EQ28})$ at hand, we have the following consequences.
%\begin{enumerate}
%\item[$(a)$]

\noindent
{\bf (a)} $\Phi$ maps the ball $B_{T}^{R}$ onto itself, for  suitable values of $T, R >0$. In fact, from  $(\ref{EQ28})$, we get
\begin{equation}\label{EQ28a1}
\|\Phi(u) \|_{X_{T}}
\leq C \Big( \|u_{0} \|_{H^{s}} + \|v_{0}\|_{H^{s} \cap L^{\infty}}  \|u \|_{X_{T}}  T^{\gamma_{p}}
+  \frac{1}{\kappa} \|u\|_{X_{T}}^{3} T^{  \gamma_{p}}\Big).
\end{equation}

Let us choose  $R:= 2 C \|u_{0}\|_{H^{s}} > 0$ and consider $u \in  B_{R}^{T} \subset X_{T}$. With these considerations, 
\eqref{EQ28a1} yields  
\begin{equation}\label{EQ28a2.1}
\begin{split}
\|\Phi(u) \|_{X_{T}}
&\leq  \frac{R}{2} + C \left(\|v_{0}\|_{H^{s} \cap L^{\infty}}
+  \frac{1}{\kappa} R^{2} \right)   T^{\gamma_{p}} R.
\end{split}
\end{equation}
Now, we choose $T > 0$ in such a way that,
\begin{equation}\label{Choice-T}
C \left(\|v_{0}\|_{H^{s} \cap L^{\infty}}
+  \frac{1}{\kappa} R^{2} \right)  T^{\gamma_{p}} < \frac{1}{2},
\end{equation}
which means,
\begin{equation}\label{Choice-T2}
 T \simeq \left(\frac{1}{ \|v_{0}\|_{H^{s} \cap L^{\infty}}
+  \frac{1}{\kappa} \|u_0\|_{H^s}^{2} } \right)^{\frac{1}{\gamma_{p}}}.
\end{equation}
Using these choices in \eqref{EQ28a2.1} we conclude that $\Phi$ maps $B_{T}^R$ onto itself.

\noindent
%\item[$(b)$] 
{\bf (b)} $\Phi: B_{T}^{R} \rightarrow B_{T}^{R}$ is a contraction map. In fact, using relations $u v_{0} -   \widetilde{u} \widetilde{v_{0}}=u (v_{0} -   \widetilde{v_{0}})+   \tilde{v_{0}} (u -   \tilde{u})$  and  $|u|^{2} - |\tilde{u}|^{2} =  \overline{u} (u - \tilde{u}) + \tilde{u} \overline{(u - \tilde{u})}$ and an analogous procedure to obtain \eqref{EQ28}, we can easily get
\begin{equation}\label{Contra-1}
\begin{split}
\|\Phi(u) - \Phi(\tilde{u})\|_{X_{T}}
& \lesssim \| v_{0}\|_{ H^{s} \cap L^{\infty}}  \|u -  \tilde{u}\|_{X_{T}} T^{\gamma_{p}}\\
 &\quad + \frac{3}{\kappa} (\|u\|_{X_{T}} \|\tilde{u}\|_{X_{T}}
+   \|u\|_{X_{T}}^{2}
+   \|\widetilde{u}\|_{X_{T}}^{2})  \|u - \tilde{u}\|_{X_{T}}  T^{ \gamma_{p}}.
 \end{split}
 \end{equation}
Therefore, for   $u, \tilde{u} \in B_{T}^R$ we get
\begin{equation}\label{Contra-2}
\|\Phi(u) - \Phi(\tilde{u})\|_{X_{T}}\leq  C ( \| v_{0}\|_{ H^{s} \cap L^{\infty}}
 + \frac{1}{\kappa} R^{2})  \|u - \tilde{u}\|_{X_{T}}  T^{ \gamma_{p}}.
\end{equation}

Hence, for the choice of $T$ as in \eqref{Choice-T} (possibly smaller) we have  $ C ( \|v_{0}\|_{ H^{s} \cap L^{\infty}}
 + \frac{1}{\kappa} R^{2})   T^{ \gamma_{p}} < 1$, and consequently  conclude that  $\Phi: B_{T}^R \rightarrow B_{T}^R$ is a contraction. Thus, applying the Banach fixed-point theorem, we obtain that there exists  a unique
 $u \in B_{T} \subset X_{T}$ which is solution of the integral equation $(\ref{SDS1234})$.

\noindent
%\item[$(c)$]
{\bf (c)} The flow generated  by the solution is Lipschitz in bounded subsets of $H^{s}$. In fact, consider $u , \tilde{u} \in X_{T}$ two solutions of the integral equation corresponding to the two initial data $u_{0}, \tilde{u_{0}} \in H^{s}(M)$ (noting that $v_{0}$ is the same for the both solutions). Thus, with a similar procedure used to obtain \eqref{Contra-1}, we get
\begin{equation}\label{EQ28a2}
\begin{split}
\|u- \tilde{u}\|_{X_{T}} & \leq C \|u_{0} -  \tilde{u_{0}}\|_{H^{s}} \\
& \quad{ }+  C \left(\| v_{0}\|_{ H^{s} \cap L^{\infty}}  + \frac{3}{\kappa} \Big( \|u\|_{X_{T}}^{2}  +  \|u\|_{X_{T}} \|\tilde{u}\|_{X_{T}} +  \|\widetilde{u}\|_{X_{T}}^{2} \Big) \right) \|u -   \tilde{u}\|_{X_{T}} T^{\gamma_{p}} .
\end{split}
\end{equation}

Recall that, in $(a)$ and $(b)$ that we had chosen  $T > 0$ and $R >0$ in such a way that $ C ( \|v_{0}\|_{ H^{s} \cap L^{\infty}}
 + \frac{1}{\kappa}R^{2})   T^{ \gamma_{p}} < 1$. Using these choices we deduce from 
 \eqref{EQ28a2} that, if  $u, \tilde{u} \in B_{T}^{R}$, then there exists
 $C > 0$ such that
\[
\|u -  \tilde{u} \|_{X_{T}} \leq C \|u_{0}- \tilde{u}_{0}\|_{ H^{s}}.
\]
Thus, we conclude that the solution is in fact Lipschitz on bounded subsets of $H^{s}(M)$.

\noindent
%\item[$(d)$]
{\bf (d) }Persistence of regularity. It remains to verify that the property  $(ii)$ of the  Definition $\ref{LWPDef}$ holds true. Using  $(\ref{EQ14})$  for the solution $u=\Phi(u)$ with initial data  $u_{0} \in H^{r}(M)$ with  $r > s$, we have
\begin{equation}\label{EQ28a}
\|u(t)  \|_{H^{r}} \lesssim \|u_0\|_{H^r}+\int_{0}^{T}  \| G(t)\|_{H^{r}} dt.
\end{equation}

Now, observe that
\begin{equation}\label{EQ28aa}
\begin{split}
\int_{0}^{T}\|G(t)\|_{H^{r}} dt&\leq \int_{0}^{T} \|u(t)v_{0}(x)\|_{H^{r}} dt+ \int_{0}^{T} \left\|u(t) \int_{0}^{t} e^{-(t- t')/\kappa} |u(t')|^{2} dt' \right\|_{H^{r}} dt\\
& =: A_{1} + A_{2}. \\
\end{split}
\end{equation}

Using $(\ref{EQ18})$ with $s = r$, we get
\begin{equation}\label{EQ28b}
A_{1}  \lesssim \|v_{0}\|_{H^{r}} \|u\|_{L^{p}_{T} L^{\infty}_{x}} T^{\gamma_{p}} + \|u\|_{L^{\infty}_{T}(H^{r})}\|v_{0}\|_{L_{x}^{\infty}} T.
\end{equation}

On the other hand, using   $(\ref{EQ21-1})$ with $s = r$, $f,g=u$, $h=\bar{u}$ and $p > 2$, one obtains
\[
A_{2}\lesssim T \|u\|_{L^{\infty}_{T}(H^{r})} \|u\|_{L^{p}_{T}(L^{\infty}_{x})}^{2}  + 2 \|u\|_{L^{\infty}_{T}(H^{r})} \|u\|_{L^{p}_{T}(L^{\infty}_{x})} T^{\gamma_{p}}  \int_{0}^{T} \|u(t)\|_{L^{\infty}_{x}} dt.
\]

Next, using Hölder's inequality, in the variable $t$, we get
\[
A_{2}\lesssim  (T + 2 T^{2\gamma_{p}}) \|u\|_{L^{\infty}_{T}(H^{r})} \|u\|_{L^{p}_{T}(L^{\infty}_{x})}^{2}.
\]

As $p > 2$, we have $2 \gamma_{p} = 2 - \frac{2}{p}> 1$. Thus, if $T \leq 1$, we obtain
\begin{equation}\label{EQ28c}
A_{2}\lesssim  3 T \|u\|_{L^{\infty}_{T}(H^{r})} \|u\|_{L^{p}_{T}(L^{\infty}_{x})}^{2}.
\end{equation}

Combining  estimates \eqref{EQ28a}, \eqref{EQ28aa}, \eqref{EQ28b} and \eqref{EQ28c}, we arrive at 
\begin{equation}\label{EQ28d}
\|u(t)\|_{H^{r}} \lesssim \|u_{0}\|_{H^{r}} +  \|v_{0}\|_{H^{r}} \|u\|_{L^{p}_{T} L^{\infty}_{x}} T^{\gamma_{p}} + T \|u\|_{L^{\infty}_{T}(H^{r})} \|v_{0}\|_{L_{x}^{\infty}} +   3 T \|u\|_{L^{\infty}_{T}(H^{r})} \|u\|_{L^{p}_{T}(L^{\infty}_{x})}^{2}.
\end{equation}
Denote by $T_{r}$ the time of local existence and by  $T^{\ast}_{r}$ the maximal time of existence for the solution  $\tilde{u}$ corresponding to
the initial data $u_{0} \in H^{r}$. We have that   $T^{\ast}_{r}$ satisfies the blow-up alternative with respect to the norm
$\|.\|_{H^{r}}$. As $H^{r} \hookrightarrow H^{s}$, we have to make a distinction between the solutions in these spaces, because
$\tilde{u}$  can be taken as a solution in $H^{s}$ too. Denote by $T_{s}$  the time of local existence of solution in $H^{s}$, and by $u$ the solution in this space. By uniqueness, we have  $u = \tilde{u}$ in $[0, T_{r}^{\ast})=:J_r^*$. We want to show that $T_{s}^{\ast} = T_{r}^{\ast}$,
where $T_{s}^{\ast}$ denotes the maximal time existence of the solution  $u$. Clearly, we have   $T_{s}^{\ast} \geq T_{r}^{\ast}$. Suppose $T_{s}^{\ast} > T_{r}^{\ast}$,
and consider $0 < \varepsilon <L\ll 1$ in such way that $I_{\varepsilon} := [T^{\ast}_{r} - L,T^{\ast}_{r} - \varepsilon ]$
satisfies $I_{\varepsilon} \subset J_r^*$  and
\[
|I_{\varepsilon}| = T^{\ast}_{r} - \varepsilon  - (T^{\ast}_{r} - L) = L - \varepsilon > 0.
\]

Using the estimate $(\ref{EQ28d})$ on the interval $I_{\varepsilon}$, we obtain
\begin{equation}\label{EQ28e}
\begin{split}
\|\tilde{u}(t)\|_{H^{r}} & \lesssim \|u(T^{\ast}_{r} - L)\|_{H^{r}} +  \|v_{0}\|_{H^{r}} \|\tilde{u}\|_{L^{p}(I_{\varepsilon}, L^{\infty}_{x})} L^{\gamma_{p}}\\
&\quad+ L \cdot \|\tilde{u}\|_{L^{\infty} ( I_{\varepsilon},H^{r})} \Big(\|v_{0}\|_{L_{x}^{\infty}} + 3  \|\tilde{u}\|_{L^{p}(I_{\varepsilon},L^{\infty}_{x})}^{2} \Big) .
\end{split}
\end{equation}

Notice that, $\tilde{u} = u$ in  $J_{r}^{\ast}$ and  $u$  exists in  $[0, T_{s}^{\ast})\supset J_{r}^{\ast}$. This shows that
\[
\|\tilde{u}\|_{L^{p}([0, T^{\ast}_{r}), L^{\infty}_{x})} = \|u\|_{L^{p}(\overline{J_{r}^{\ast}}, L^{\infty}_{x})} < \infty.
\]

It follows from $(\ref{EQ28e})$ that
\begin{equation}\label{EQ28f}
\begin{split}
\|\tilde{u}\|_{L^{\infty}(I_{\varepsilon},H^{r})} & \lesssim \|u(T^{\ast}_{r} - L)\|_{H^{r}} +  \|v_{0}\|_{H^{r}} \|u\|_{L^{p}(J^{\ast}_{r}, L^{\infty}_{x})} L^{\gamma_{p}}\\
 &\quad{} + L \cdot \|\tilde{u}\|_{L^{\infty}( I_{\varepsilon},H^{r})}  (\|v_{0}\|_{L_{x}^{\infty}} +   3  \|u\|_{L^{p}(\overline{J_{r}^{\ast}},L^{\infty}_{x})}^{2}) .
\end{split}
\end{equation}

Thus, if we choose $L$ very small in such that $ C L  (\|v_{0}\|_{L_{x}^{\infty}} + 3 \|u\|_{L^{p}(\overline{J_{r}^{\ast}},L^{\infty}_{x})}^{2}) < \frac{1}{2}$, it follows that
\begin{equation}\label{EQ28g}
\|\tilde{u}\|_{L^{\infty}(I_{\varepsilon},H^{r})} \leq  2C \|u(T^{\ast}_{r} - L)\|_{H^{r}} +  2 C\|v_{0}\|_{H^{r}} \|u\|_{L^{p}(J^{\ast}_{r}, L^{\infty}_{x})}.
\end{equation}
Hence, $ \lim_{t \rightarrow T_{r}^{\ast}} \|\tilde{u}(t)\|_{H^{r}} < \infty$, and consequently, using  the blow-up alternative we should have
 $T_{r}^{\ast} = \infty$, what is a contradiction, because we supposed $T_{r}^{\ast} < T_{s}^{\ast}$ (that is, $T_{r}^{\ast} < \infty$). Thus,
we must have $T^{\ast}_{r} = T^{\ast}_{s}$ as desired.
%\end{enumerate}
\end{proof}

\begin{obs}
For $s > \frac{d}{2}$, using the Sobolev embedding $H^{s}(M) \hookrightarrow L^{\infty}(M)$,
it is possible to prove that, for every $(u_{0}, v_{0}) \in H^{s}(M) \times H^{s}(M)$, there exist a time  $T = T (\kappa,\|v_{0}\|_{H^{s}}, \|u_{0}\|_{H^{s}})$ and 
a unique solution $u \in X_{T}= C([0, T]; H^{s}(M))$ of \eqref{IntegroDifferentialFormulation}. 
Also, it can be shown that the time of existence $T$  obeys the relation
\[
T \simeq \Big(\frac{1}{\|v_{0}\|_{H^{s}} \sqrt{\kappa} + \|u_{0}\|_{H^{s}}^{2}} \Big)^{\frac{1}{2}}.
\]
Thus, in the limit when $\kappa \longrightarrow 0^{+}$, we have that $T \simeq \frac{1}{\|u_{0}\|_{H^{s}}}$, which is a contrast 
with the time of local existence in the case $s < \frac{d}{2}$ (see \eqref{Choice-T2} above). This allows one to think about  issues related to the  convergence of solutions of the SD system to solutions of the NLS equation when the parameter $\kappa $ tends to zero considering $s>\frac{d}2$.

\end{obs}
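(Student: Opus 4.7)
The plan is to rerun the contraction argument of Theorem~\ref{TeorIntro5} in the simpler space $X_T := C([0,T];H^s(M))$ equipped with the sup norm. Because $s > d/2$, the Sobolev embedding $H^s(M) \hookrightarrow L^\infty(M)$ upgrades Lemma~\ref{EQ1} to the clean algebra estimate $\|fg\|_{H^s} \lesssim \|f\|_{H^s}\|g\|_{H^s}$, so neither Strichartz with loss nor the auxiliary $L^p_tL^\infty_x$ component in the norm is needed; in particular the $L^\infty$ assumption on $v_0$ of Theorem~\ref{TeorIntro5} becomes automatic from $v_0\in H^s$.

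Starting from the Duhamel operator $\Phi(u)(t) := S(t)u_0 - i\int_0^t S(t-\ell)(G_1(\ell) + G_2(\ell))\,d\ell$ of \eqref{Duhamel-1}--\eqref{Wintegral}, unitarity of $S(t)$ on $H^s$ and Minkowski reduce everything to controlling $\int_0^T\|G_j(\ell)\|_{H^s}\,d\ell$ for $j=1,2$. For $G_1(\ell) = e^{-\ell/\kappa}u(\ell)v_0$, the algebra estimate yields $\|G_1(\ell)\|_{H^s} \lesssim e^{-\ell/\kappa}\|u(\ell)\|_{H^s}\|v_0\|_{H^s}$, and the key manipulation is to extract $\sqrt{\kappa}$ from the damping via
\[
\int_0^T e^{-\ell/\kappa}\,d\ell \;=\; \kappa(1 - e^{-T/\kappa}) \;\leq\; \min(T,\kappa) \;\leq\; \sqrt{T\kappa},
\]
which gives $\int_0^T\|G_1\|_{H^s}\,d\ell \lesssim \sqrt{T\kappa}\,\|v_0\|_{H^s}\|u\|_{X_T}$. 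For the trilinear term $G_2(\ell) = \frac{\lambda}{\kappa}u(\ell)\int_0^\ell e^{-(\ell-\ell')/\kappa}|u(\ell')|^2\,d\ell'$, two applications of the algebra estimate together with the elementary identity $\frac{1}{\kappa}\int_0^\ell e^{-(\ell-\ell')/\kappa}\,d\ell' = 1 - e^{-\ell/\kappa} \leq 1$ absorb the dangerous $1/\kappa$ prefactor and produce $\int_0^T\|G_2\|_{H^s}\,d\ell \lesssim T\|u\|_{X_T}^3$. Combining these yields the master inequality
\[
\|\Phi(u)\|_{X_T} \;\leq\; C\|u_0\|_{H^s} \;+\; C\sqrt{T\kappa}\,\|v_0\|_{H^s}\|u\|_{X_T} \;+\; CT\|u\|_{X_T}^3,
\]
and taking $R = 2C\|u_0\|_{H^s}$ together with $T$ small enough that $C\sqrt{T\kappa}\,\|v_0\|_{H^s} + CTR^2 \leq \tfrac12$ then fixes $T$ in terms of $\kappa$, $\|v_0\|_{H^s}$ and $\|u_0\|_{H^s}$ with precisely the announced scaling.

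The contraction bound on $\Phi(u) - \Phi(\tilde u)$ follows from the same estimates after writing $uv_0 - \tilde u v_0 = (u - \tilde u)v_0$ and $|u|^2 - |\tilde u|^2 = \bar u(u-\tilde u) + \tilde u\,\overline{(u-\tilde u)}$, which produces $\|\Phi(u) - \Phi(\tilde u)\|_{X_T} \leq \tfrac12\|u - \tilde u\|_{X_T}$ for the same choice of $T$; Banach's fixed point theorem then delivers the unique solution, while Lipschitz dependence on $u_0$ and persistence of regularity are inherited verbatim from the corresponding steps in the proof of Theorem~\ref{TeorIntro5}. No genuinely new analytic obstacle appears here; the real content of the remark is the structural observation that the $1/\kappa$ hidden in $G_2$ is perfectly cancelled by the mass of its kernel while $G_1$ is damped by an additional $\sqrt{\kappa}$, so the nonlinear coefficients remain uniformly bounded in $\kappa$ and $T$ does not collapse as $\kappa \to 0^+$---in sharp contrast with \eqref{Choice-T2}, where the Strichartz-based argument of Theorem~\ref{TeorIntro5} produces a $1/\kappa$ factor inside the time expression that does force degeneration in the same limit.
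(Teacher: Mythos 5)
Your overall strategy --- running the contraction directly in $C([0,T];H^{s}(M))$ and using that $H^{s}(M)$ is an algebra for $s>\frac{d}{2}$, so that neither the Strichartz estimate with loss nor the auxiliary $L^{p}_{t}L^{\infty}_{x}$ component of the norm is needed --- is the natural route, and the paper offers no proof of this remark to compare against. Your individual estimates are correct: the algebra bound gives $\|G_{1}(\ell)\|_{H^{s}}\lesssim e^{-\ell/\kappa}\|v_{0}\|_{H^{s}}\|u\|_{X_{T}}$, hence $\int_{0}^{T}\|G_{1}\|_{H^{s}}\,d\ell\lesssim\min(T,\kappa)\,\|v_{0}\|_{H^{s}}\|u\|_{X_{T}}\le\sqrt{T\kappa}\,\|v_{0}\|_{H^{s}}\|u\|_{X_{T}}$, and the kernel mass $\frac{1}{\kappa}\int_{0}^{\ell}e^{-(\ell-\ell')/\kappa}\,d\ell'\le 1$ does absorb the $\frac{1}{\kappa}$ in $G_{2}$, yielding $\int_{0}^{T}\|G_{2}\|_{H^{s}}\,d\ell\lesssim T\|u\|_{X_{T}}^{3}$. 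This closes the fixed point and proves existence, uniqueness and the non-degeneracy of $T$ as $\kappa\to 0^{+}$, which is the substantive content of the remark.

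The gap is in the final quantitative step. Your smallness condition $C\sqrt{T\kappa}\,\|v_{0}\|_{H^{s}}+CTR^{2}\le\tfrac12$ with $R=2C\|u_{0}\|_{H^{s}}$ does \emph{not} produce ``precisely the announced scaling'': it gives $T\simeq\min\bigl(\kappa^{-1}\|v_{0}\|_{H^{s}}^{-2},\,\|u_{0}\|_{H^{s}}^{-2}\bigr)$, whereas the remark asserts $T\simeq\bigl(\|v_{0}\|_{H^{s}}\sqrt{\kappa}+\|u_{0}\|_{H^{s}}^{2}\bigr)^{-1/2}$. The two disagree already in the limit $\kappa\to 0^{+}$: yours yields $T\simeq\|u_{0}\|_{H^{s}}^{-2}$ (the standard energy-method time for a cubic nonlinearity), while the remark claims $T\simeq\|u_{0}\|_{H^{s}}^{-1}$. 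Both are bounded below uniformly in $\kappa$, so the intended contrast with \eqref{Choice-T2} survives, but you should either refrain from claiming to recover the stated formula or exhibit the bookkeeping that leads to a condition of the form $C\bigl(\|v_{0}\|_{H^{s}}\sqrt{\kappa}+R^{2}\bigr)T^{2}\le\tfrac12$, which is what a bound $T\simeq(\cdot)^{-1/2}$ requires; nothing in your estimates, nor in any straightforward variant of them, generates that factor $T^{2}$. A minor additional point: the contraction gives uniqueness only in the ball $B_{T}^{R}$, while the remark asserts uniqueness in $C([0,T];H^{s}(M))$; this needs the usual supplementary Gronwall argument on the difference of two solutions, exactly as it does in the proof of Theorem \ref{TeorIntro5}.
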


Now, we move prove the persistence property for $v$.

\begin{proof}[Proof of Corollary \ref{v(t)Persistence}]
Suppose that $v_{0} \in H^{s} \cap L^{\infty}$. According to the Theorem \ref{TeorIntro5}, one has
\[ 
u \in C([0, T], H^{s}) \cap L^{p}([0, T], L^{\infty}). 
\] 

First, we  show that $v \in C([0, T], H^{s})$. Consider $t_{0} \in (0, T)$ and $0 < \varepsilon \ll 1$. Now, using the expression for $v$ given by \eqref{ODEvsolution}, we get 
\begin{equation*}
\begin{split}
 \|v(t_{0} + \varepsilon) - v(t_{0})\|_{H^{s}} & \leq e^{-\frac{t_{0}}{\kappa}} |e^{-\frac{\varepsilon}{\kappa}} - 1|  \|v_{0}\|_{H^{s}} + \frac{C}{\kappa} \int_{t_{0}}^{t_{0} + \varepsilon} e^{ -\frac{(t_{0} - \ell)}{\kappa}} \|u(\ell)\|_{L^{\infty}} \|u(\ell)\|_{H^{s}} d \ell\\
 &\leq |e^{-\frac{\varepsilon}{\kappa}} - 1|  \|v_{0}\|_{H^{s}} + \frac{C}{\kappa}  \|u\|_{L^{\infty}([t_{0}, t_{0}+ \varepsilon ],H^{s})}   \int_{t_{0}}^{t_{0} + \varepsilon} e^{ -\frac{(t_{0} - \ell)}{\kappa}} \|u(\ell)\|_{L^{\infty}} d \ell.
\end{split}
\end{equation*}
Let $p > 2$ and $p'$  be the conjugate exponent. Using  Hölder's inequality, we get
\begin{equation*}
\begin{split}
 \|v(t_{0} + \varepsilon) - v(t_{0})\|_{H^{s}}& \leq  |e^{-\frac{\varepsilon}{\kappa}} - 1|  \|v_{0}\|_{H^{s}} + \frac{C}{\kappa}  \|u\|_{L^{\infty}([t_{0}, t_{0}+ \varepsilon ],H^{s})} \|u\|_{L^{p}([t_{0}, t_{0} + \varepsilon],L^{\infty})}  \Big(\int_{t_{0}}^{t_{0} + \varepsilon} e^{ -\frac{(t_{0} - \ell)}{\kappa} p'} d \ell\Big)^{\frac{1}{p'}}\\
 & \leq  |e^{-\frac{\varepsilon}{\kappa}} - 1|  \|v_{0}\|_{H^{s}  \cap L^{\infty}} + \frac{C}{\kappa}  \|u\|_{L^{\infty}([0, T ],H^{s})} \|u\|_{L^{p}([0, T],L^{\infty})}  \Big( \frac{\kappa}{p'}\Big)^{\frac{1}{p'}} 
 \Big(e^{\frac{\varepsilon p'}{\kappa}} - 1\Big)^{\frac{1}{p'}}.
\end{split}
\end{equation*}
Passing to the limit when $\varepsilon \rightarrow 0^{+}$ we obtain the continuity from the right. In an analogous 
way, we obtain the continuity from the left. This leads to the conclusion that
$v \in C([0, T], H^{s})$ . 

Second, to show that $v \in L^{p}([0, T], L^{\infty}(M))$ we take the $L^{\infty}$- norm of $v$ in \eqref{ODEvsolution}
and use Hölder's inequality to obtain 
\[
 \|v(t)\|_{L^{\infty}} \leq e^{\frac{-t}{\kappa}}  \|v_{0}\|_{L^{\infty}} + \kappa^{-\frac{2}{p}}  \Big( \frac{p-2}{p}\Big)^{\frac{p-2}{p}} \|u\|_{L^{p}([0, T],L^{\infty})}^{2}.
\]
Next, taking the $L^{p}_{T}$- norm, one obtains 
\[
 \|v\|_{L^{p}([0, T],L^{\infty})} \leq  \|e^{\frac{-t}{\kappa}}\|_{L^{p}([0, T])}   \|v_{0}\|_{L^{\infty}} + \kappa^{-\frac{2}{p}}  \Big( \frac{p-2}{p}\Big)^{\frac{p-2}{p}} \|u\|_{L^{p}([0, T],L^{\infty})}^{2} T^{\frac{1}{p}}.
\]

Therefore, 
\[
 \|v\|_{L^{p}([0, T],L^{\infty})} \leq  (1 - e^{-\frac{Tp}{\kappa}})^{\frac{1}{p}} \Big( \frac{\kappa}{p}\Big)^{\frac{1}{p}} \|v_{0}\|_{H^{s} \cap L^{\infty}} + \kappa^{-\frac{2}{p}}  \Big( \frac{p-2}{p}\Big)^{\frac{p-2}{p}} \|u\|_{L^{p}([0, T],L^{\infty})}^{2} T^{\frac{1}{p}}.
\]

This shows  that $v \in L^{p}([0, T], L^{\infty})$ for $v_{0}  \in H^{s} \cap L^{\infty}$, and 
 completes the proof of the Corollary. 

\end{proof}

\begin{obs}\label{Vpersistence}

For given $v_{0} \in L^{\infty}$, its evolution $v(t)$  remains in 
$L^{\infty}$ for all $0 \leq t \leq T$. In fact, using $u\in X_T$, we have from  \eqref{ODEvsolution}
\begin{equation}\label{VLinftyEstimate}
\begin{split}
 \|v(t)\|_{L^{\infty}(M)} &= e^{\frac{-t}{\kappa}} \|v_{0}\|_{L^{\infty}(M)} + \frac{1}{\kappa} \int_{0}^{t} e^{\frac{-(t - t')}{\kappa}} \|u(t')\|_{L^{\infty}}^{2} dt'\\
&\leq  \|v_{0}\|_{L^{\infty}(M)} + \frac{1}{\kappa} \int_{0}^{t}  \|u(t')\|_{L^{\infty}}^{2} dt' \\
&\leq  \|v_{0}\|_{L^{\infty}(M)} + \frac{1}{\kappa} \left(\int_{0}^{t}  \|u(t')\|_{L^{\infty}}^{p}  dt'\right)^{\frac{2}{p}} \left(\int_{0}^{t} dt'\right)^{1 - \frac{2}{p}} \\
&=  \|v_{0}\|_{L^{\infty}(M)} + \frac{1}{\kappa}  \|u\|_{L^{p}([0, t];L^{\infty})}^{2} \cdot  t^{1 - \frac{2}{p}} <\infty.
\end{split}
\end{equation}

In the case in that $d = 2$ we have $H^{1 + } \hookrightarrow L^{\infty}$. Thus, if $v_{0} \in H^{1+}$ 
the estimate \eqref{VLinftyEstimate} holds true if $u \in L^{p}([0, T], L^{\infty})$. 
\end{obs}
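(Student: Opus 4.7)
The plan is to work directly with the explicit representation
\[
v(t,x) = e^{-t/\kappa} v_0(x) + \frac{\lambda}{\kappa} \int_0^t e^{-(t-\ell)/\kappa}\,|u(\ell,x)|^2\,d\ell,
\]
exploiting the regularity of the solution $u \in X_T$ given by Theorem~\ref{TeorIntro5} together with the bilinear algebra estimate of Lemma~\ref{EQ1}. The two claims ($v\in C([0,T];H^s(M))$ and $v\in L^p([0,T];L^\infty(M))$) will be verified separately.

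For the $H^s$ continuity, I would fix $t_0\in[0,T)$ and $0<\varepsilon\ll 1$ and estimate $v(t_0+\varepsilon)-v(t_0)$ in $H^s$ by splitting into the linear contribution $e^{-t_0/\kappa}(e^{-\varepsilon/\kappa}-1)v_0$, which trivially tends to $0$ in $H^s$, and the Duhamel-like integral part. For the latter, I would further split the integral as $\int_0^{t_0}+\int_{t_0}^{t_0+\varepsilon}$: on $[0,t_0]$ the difference of the kernels $e^{-(t_0+\varepsilon-\ell)/\kappa}-e^{-(t_0-\ell)/\kappa}=e^{-(t_0-\ell)/\kappa}(e^{-\varepsilon/\kappa}-1)$ contributes a factor $|e^{-\varepsilon/\kappa}-1|$; on $[t_0,t_0+\varepsilon]$ the integrand itself vanishes as $\varepsilon\to 0^+$. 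In both pieces I would use Lemma~\ref{EQ1} to write
\[
\bigl\||u(\ell)|^2\bigr\|_{H^s} \lesssim \|u(\ell)\|_{L^\infty}\|u(\ell)\|_{H^s},
\]
followed by Hölder in time with exponents $p$ and $p'$ applied to the kernel $e^{-(t_0-\ell)/\kappa}$, the factor $\|u(\ell)\|_{L^\infty}$, and the uniform bound $\|u\|_{L^\infty_T H^s}$. Continuity from the left is handled symmetrically, giving $v\in C([0,T];H^s(M))$.

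For the $L^p_T L^\infty_x$ bound, I would take the $L^\infty(M)$-norm in the representation of $v(t)$, bound $|u(\ell,x)|^2\le \|u(\ell)\|_{L^\infty}^2$, and apply Hölder with exponents $p/2$ and $(p/2)'=p/(p-2)$ to the time integral $\int_0^t e^{-(t-\ell)/\kappa}\|u(\ell)\|_{L^\infty}^2\,d\ell$. This yields
\[
\|v(t)\|_{L^\infty}\lesssim \|v_0\|_{L^\infty}+\kappa^{-2/p}\,\|u\|_{L^p([0,T];L^\infty)}^2,
\]
and then taking $L^p_t$ on $[0,T]$ gives the finiteness of $\|v\|_{L^p_T L^\infty_x}$ in terms of $\|v_0\|_{L^\infty}$, $\|u\|_{L^p_TL^\infty}$, and a power of $T$.

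The only delicate point is the time integrability of the exponential kernel when paired with the $L^p$ norm of $\|u(\ell)\|_{L^\infty}$; this is precisely where the choice of the dual exponent $p'$ and the smallness of $T$ become important, but no genuine obstacle arises because the kernel $e^{-(t-\ell)/\kappa}$ is bounded by $1$ and $u\in L^p_TL^\infty_x$ is already furnished by membership of $u$ in $X_T$. Hence the argument reduces to routine Hölder and algebra estimates.
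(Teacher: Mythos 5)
Your argument for the $L^{\infty}$ bound --- taking the $L^{\infty}(M)$-norm in \eqref{ODEvsolution}, bounding $|u(\ell,x)|^{2}$ by $\|u(\ell)\|_{L^{\infty}}^{2}$, and applying H\"older in time with exponents $p/2$ and $(p/2)'$ to exploit $u\in L^{p}([0,T];L^{\infty}(M))$ --- is exactly the paper's proof of this remark, differing only in whether the exponential kernel is kept inside the H\"older step (yielding a $\kappa^{-2/p}$ factor) or simply bounded by $1$ first (yielding a $t^{1-2/p}$ factor). The additional material on $H^{s}$-continuity and on the $L^{p}_{T}L^{\infty}_{x}$ norm of $v$ goes beyond what this remark asserts (it is the content of Corollary \ref{v(t)Persistence}), but the part relevant here is correct and essentially identical to the paper's.
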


\section{Global well-posedness}\label{sec-4}
In this section we prove the global well-posedness result for the SD-system considering $d=2$.
\subsection{Derivation of an {\em a priori} estimate} 
 Recall that
the SD system  does not possess energy-conservation law. However, we can obtain a good relation involving the gradient term.
Using such a relation, we deduce an \textit{a priori} estimate for the $H^{1}$-norm of the  solution $u$, which in turn will be used to prove the global well-posedness result. 

The following version of the Gagliardo-Nirenberg inequality on compact Riemannian manifolds will be very useful in our argument.
\begin{prop}\label{GNM}
Let $(M, g)$ be a compact Riemannian manifold  of dimension $d \geq2$ and  $1 < p \leq 2$.  If
$1 \leq q < r < p^{\ast} = \frac{dp}{d-p}$ and $\theta := \theta(p, q, r) = \frac{d p (r - q)}{r ( q (p - d) + d p)} \in (0, 1]$, then

\begin{equation}\label{EQ34}
\|u\|_{L^{r}(M)}^{\frac{p}{\theta}} \leq  \left( A_{opt} \| \nabla_{g} u\|_{L^{p}(M)}^{p} + B \|u\|_{L^{p}(M)}^{p} \right) \|u\|_{L^{q}(M)}^{\frac{p (1 - \theta)}{\theta}}
\end{equation}
where
\begin{equation}\label{EQ35}
A_{opt} = \inf \Big\{ A \in \mathbb{R}: \mbox{ there is } B \in \mathbb{R} \mbox{ such that } (\ref{EQ34}) \mbox{ is valid }  \Big\}.
\end{equation}
\end{prop}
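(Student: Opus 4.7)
The strategy is to transfer the classical sharp Euclidean Gagliardo--Nirenberg inequality to $M$ via the partition of unity already in place, then identify the optimal constant through a concentration argument, and finally verify attainment by a compactness/concentration-compactness analysis.

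First, I would recall the sharp Euclidean Gagliardo--Nirenberg inequality: for every $w \in W^{1,p}(\mathbb{R}^{d}) \cap L^{q}(\mathbb{R}^{d})$ with the stated range of exponents, there exists a sharp constant $K_{0} > 0$ such that
\begin{equation*}
\|w\|_{L^{r}(\mathbb{R}^{d})}^{p/\theta} \leq K_{0}\, \|\nabla w\|_{L^{p}(\mathbb{R}^{d})}^{p}\, \|w\|_{L^{q}(\mathbb{R}^{d})}^{p(1-\theta)/\theta}.
\end{equation*}
This is classical (see, e.g., Del Pino--Dolbeault for explicit sharp constants).

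Second, using the finite atlas $(U_{\alpha}, \kappa_{\alpha})_{\alpha=1}^{k}$ of $M$ together with a subordinate partition of unity $(h_{\alpha})$, I would decompose
\begin{equation*}
\|u\|_{L^{r}(M)}^{r} = \sum_{\alpha} \int_{M} h_{\alpha} |u|^{r} \, dg,
\end{equation*}
apply the Euclidean inequality to each pull-back $(h_{\alpha}^{1/r} u) \circ \kappa_{\alpha}$, which has compact support in a chart, and recombine using H\"older. Since $|\nabla(h_{\alpha}^{1/r} u)|^{p} \lesssim |\nabla u|^{p} + |u|^{p}$ pointwise, with a constant depending on $h_{\alpha}$ and on the metric equivalence in the chart, summing over $\alpha$ yields the inequality \eqref{EQ34} for some finite pair $(A, B)$. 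In particular, the set in \eqref{EQ35} is nonempty and $A_{opt}$ is finite. A concentration argument at an arbitrary point $x_{0} \in M$, using normal coordinates and test functions of the form $u_{\varepsilon}(x) = \varphi(d_{g}(x,x_{0})/\varepsilon)$ rescaled to approach a Euclidean extremizer, shows $A_{opt} \geq K_{0}$; refining the localization (taking charts of shrinking diameter near $x_{0}$) matches this with $A_{opt} \leq K_{0}$, giving $A_{opt} = K_{0}$.

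Finally, to show the infimum is attained, I would take $A_{n} \searrow A_{opt}$ with corresponding valid $B_{n}$'s. If $(B_{n})$ stays bounded, passing to the limit in \eqref{EQ34} using continuity of the functionals on $W^{1,p}(M) \cap L^{q}(M)$ produces \eqref{EQ34} with $A = A_{opt}$ and $B := \liminf B_{n}$. The main obstacle will be excluding $B_{n} \to \infty$: this requires a concentration-compactness argument in the spirit of Lions, exploiting the compact embedding $W^{1,p}(M) \hookrightarrow L^{r}(M)$ (valid since $r < p^{\ast}$). Extremal sequences that prevent boundedness of $B_{n}$ must concentrate at some point of $M$; in normal coordinates they locally reproduce a Euclidean extremizer and, by uniqueness of the latter up to translation and scaling, force the constant in the blow-up profile to equal $K_{0} = A_{opt}$, leaving a residual non-concentrated part which a finite $B$ can absorb. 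This compactness step for the second best constant is the technical heart of the argument.
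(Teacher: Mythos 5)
The paper does not actually prove this proposition: its ``proof'' is a citation to \cite{CECCONMONTENEGRO}, where the statement is a main theorem. Your outline reproduces the right general strategy (transfer the Euclidean inequality by a partition of unity, identify the first best constant with the Euclidean one, then prove attainment), but the two decisive steps are asserted rather than established, and each hides a genuine difficulty. First, the claim $A_{opt} \leq K_{0}$ does not follow from ``refining the localization'': after applying the Euclidean inequality chart by chart you must recombine via $\|u\|_{L^{r}(M)} \leq \sum_{\alpha} \|h_{\alpha} u\|_{L^{r}}$ and then raise to the power $p/\theta > 1$, which costs a factor growing with the number of charts; shrinking the charts to kill the metric distortion makes that number blow up, so the naive argument only yields \emph{some} finite $A$, not the sharp one. (There is also the minor issue that $h_{\alpha}^{1/r}$ need not be Lipschitz where $h_{\alpha}$ vanishes, so the pieces you feed into the Euclidean inequality are not obviously in $W^{1,p}$; this is fixable by choosing the partition as $r$-th powers, but it should be said.)

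Second, and more seriously, the attainment of the infimum --- i.e., the existence of a finite $B$ making \eqref{EQ34} valid with $A = A_{opt}$ --- is precisely the content of the cited theorem, and your sketch leaves it as a black box. Ruling out $B_{n} \to \infty$ along a minimizing sequence $A_{n} \searrow A_{opt}$ is not a routine application of Lions' concentration-compactness: in \cite{CECCONMONTENEGRO} it requires a full blow-up analysis of extremal functions for perturbed functionals, with pointwise concentration estimates near the blow-up point. Moreover, your argument invokes ``uniqueness of the Euclidean extremizer up to translation and scaling,'' which is not known for general exponents $(p,q,r)$ in the stated range (it is known for the Del Pino--Dolbeault family and for the case $p=q=2$, $r=4$, $d=2$ actually used later in the paper, but the proposition is stated in full generality). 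As written, the proposal is a plausible roadmap whose technical heart --- the only part that distinguishes the sharp constant $A_{opt}$ from an arbitrary admissible $A$ --- is missing; the honest route here is the one the paper takes, namely quoting \cite{CECCONMONTENEGRO}.
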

\begin{dem}
See  \cite{CECCONMONTENEGRO}, pg 854.
\end{dem}

Now, we move to derive a relation involving gradient of the solution $u$ to the IVP \eqref{SDE12} will  be useful to obtain an \textit{a priori} estimate. It
 can be seen as a
modification of equation $(4.2)$ in \cite{BIDEGARAY2} or a generalization
of Proposition 5, pg.710 in \cite{ARBIETOMATHEUS} which we extend to the SD system in the context of  Riemannian manifolds.
\begin{prop}\label{PropertieOfSolution}
Let $u$ be a sufficiently smooth solution to the SD system  \eqref{SDE12}. Then,
\begin{equation}\label{TimeDerivative}
    \frac{d}{dt}\left(\int_{M} |\nabla u(t)|_{g}^{2} dg + \int_{M} |u(t)|^{2} v(t) dg\right) = \frac{1}{\kappa} \left( - \int_{M} |u(t)|^{2} v(t) dg + \lambda \int_{M} |u(t)|^{4} dg\right).
\end{equation}
\end{prop}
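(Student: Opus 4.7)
The plan is to differentiate each of the two terms on the left-hand side separately, use the Schr\"odinger equation $i\partial_t u = -\Delta u + uv$ to convert the time derivative of $u$ into a spatial expression, and then combine the resulting integrals. The key cancellation to look for is that, because $v$ is real and $u$ solves a Schr\"odinger-type equation, the "Schr\"odinger contributions" from differentiating $\int_M |\nabla u|_g^2$ and from differentiating $\int_M |u|^2 v$ are negatives of each other. After that cancellation, only the ODE term for $\partial_t v$ survives, which produces exactly the right-hand side.

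More concretely, first I would compute
\[
\frac{d}{dt}\int_{M} |\nabla u|_{g}^{2}\, dg = 2\,\mathrm{Re}\int_{M}\nabla \partial_{t}u\cdot \overline{\nabla u}\, dg = -2\,\mathrm{Re}\int_{M}\partial_{t}u\,\overline{\Delta u}\, dg,
\]
using Green's identity on the closed manifold $M$ (no boundary term). Substituting $\partial_{t}u = i\Delta u - iuv$ and using that $|\Delta u|^{2}$ is real and $v$ is real, this reduces to
\[
\frac{d}{dt}\int_{M} |\nabla u|_{g}^{2}\, dg = -2\int_{M} v\,\mathrm{Im}\bigl(u\,\overline{\Delta u}\bigr)\, dg.
\]

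Next I would compute
\[
\frac{d}{dt}\int_{M}|u|^{2}v\, dg = \int_{M}\partial_{t}(|u|^{2})\,v\, dg + \int_{M}|u|^{2}\,\partial_{t}v\, dg.
\]
For the first piece, the same substitution $\partial_{t}u = i\Delta u - iuv$ gives $\partial_{t}|u|^{2} = 2\,\mathrm{Im}(u\,\overline{\Delta u})$, so this integral equals $+2\int_{M} v\,\mathrm{Im}(u\,\overline{\Delta u})\, dg$, precisely canceling the term from the gradient computation. For the second piece, the second equation of \eqref{SDE12} gives $\partial_{t}v = \kappa^{-1}(\lambda|u|^{2} - v)$, so
\[
\int_{M}|u|^{2}\partial_{t}v\, dg = \frac{1}{\kappa}\left(\lambda\int_{M}|u|^{4}\, dg - \int_{M}|u|^{2}v\, dg\right).
\]

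Adding the two derivatives and using the cancellation yields \eqref{TimeDerivative}. The only delicate point is the assumption of sufficient smoothness so that the computations (differentiation under the integral sign, integration by parts with no boundary contribution, pointwise manipulation of $\partial_{t}u$) are all justified; for $C^{\infty}$ solutions on the closed manifold $M$ this is immediate, and the identity then extends to the solutions produced by Theorem \ref{TeorIntro5} by the standard density/persistence-of-regularity argument. I do not expect a genuine obstacle; the proof is essentially an algebraic identity once one notes the $\mathrm{Im}(u\,\overline{\Delta u})$ cancellation, which is the reason this particular combination of $\|\nabla u\|_{L^{2}}^{2}$ and $\int|u|^{2}v$ is the natural "pseudo-energy" for the SD system.
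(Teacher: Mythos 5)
Your proposal is correct and follows essentially the same route as the paper: differentiate both terms, substitute the equation for $\partial_{t}u$ and the ODE for $\partial_{t}v$, and observe that the contributions $\mp 2\int_{M}v\,\mathrm{Im}(u\,\overline{\Delta u})\,dg$ cancel (the paper writes this same cancellation in terms of $u=\alpha+i\beta$ as $\int_{M}v(\beta\Delta\alpha-\alpha\Delta\beta)\,dg$). The only difference is that the paper spends more effort justifying the time differentiation via difference quotients for $H^{2}$-solutions and then runs the density argument separately, a point you correctly flag but do not carry out in detail.
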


\begin{dem}We will give a proof for $d=2$. The proof holds likewise for $d \geq 3$ by using a suitable choice of $(u_0, v_0)$ 
in order to obtain a sufficiently smooth solution $u$. Let $u(t, \cdot)$ be the $H^{2}$- solution for initial data $u_0, v_0 \in H^{2}(M)$.

%To continue with our analysis, we need to extend the metric $g$ to be defined on complex vector fields.
%For complex vector fields $X = X_{1} + i X_{2}$, $Y =Y_{1} + i Y_{2}$ where
%$X_{1}, X_{2}, Y_{1}, Y_{2}$ are real vector fields, we set
%\[
%\langle X_{1} + i X_{2}, Y_{1} + i Y_{2}\rangle_{g} := \langle X_{1}, Y_{1}\rangle_{g} + i \langle X_{1}, Y_{2}\rangle_{g} + i \langle X_{2}, Y_{1}\rangle_{g} - \langle X_{2}, Y_{2}\rangle_{g}.
%\]

%It is easy to check that the scalar product $g$ defined in this way is  $\mathbb{C}$-bilinear. In addition,
% $\nabla u = \nabla \alpha + i \nabla \beta$,
%$\Delta u = \Delta \alpha + i \Delta \beta$ if $u = \alpha + i \beta$. 

%======================================================================================================
%======================================================================================================
Next, define 
\[
E(t):= \int_{M} |\nabla_{g} u(t,\cdot)|_{g}^{2} dg,
\]
so that, 
\[
\frac{E(t + \varepsilon) -E(t) }{\varepsilon} = \int_{M} \frac{ |\nabla_{g} u(t + \varepsilon,\cdot)|_{g}^{2} -  |\nabla_{g} u(t,\cdot)|_{g}^{2}}{\varepsilon} dg.
\]

Using integration by parts formula and taking into account that 
$\partial M = \emptyset$, we obtain 
\[
- \Big(\frac{E(t + \varepsilon) -E(t) }{\varepsilon} \Big) = \int_{M} \frac{u(t + \varepsilon) - u(t)}{\varepsilon} \Delta \bar{u}(t + \varepsilon) dg+  \int_{M} \frac{\bar{u}(t + \varepsilon) - \bar{u}(t)}{\varepsilon} \Delta u(t + \varepsilon) dg.
\]

Now, define 
\[
\tilde{E}(t) = -\int_{M} (\partial_{t} u(t) \Delta \bar{u}(t) + \Delta u(t) \partial_{t} \bar{u}(t)) dg = -2 \int_{M}\Re \Big(\partial_{t} u(t) \Delta \bar{u}(t)\Big) dg.
 \]
Thus, 
\begin{equation}\label{GL-10}
\begin{split}
\Big| \tilde{E}(t) - \frac{E(t + \varepsilon) -E(t) }{\varepsilon}  \Big|& = \Big|  \frac{E(t + \varepsilon) -E(t) }{\varepsilon} - \tilde{E}(t) \Big| \\
&
\leq \Big| \int_{M} \partial_{t}u(t) \Delta \bar{u}(t)  -  \frac{u(t + \varepsilon) - u(t)}{\varepsilon} \Delta \tilde{u}(t + \varepsilon) dg  \Big| \\
&\quad
+ \Big| \int_{M} \partial_{t}\bar{u}(t) \Delta u(t)  -  \frac{\bar{u}(t + \varepsilon) - \bar{u}(t)}{\varepsilon} \Delta u(t) dg  \Big|\\
& =: E_{1} + E_{2}.
\end{split}
\end{equation}

On the one hand, 
\begin{equation}\label{GL-11}
\begin{split}
E_{1} &\leq  \Big| \int_{M} \partial_{t}u(t) (\Delta \bar{u}(t)  - \Delta \bar{u}(t + \varepsilon)) \Big| +  \Big| \int_{M} \Big( \partial_{t} u(t) - \frac{u(t + \varepsilon) - u(t)}{\varepsilon}\Big) \Delta \tilde{u}(t + \varepsilon) dg  \Big|\\
&
\leq \|\partial_{t} u\|_{L^{2}}  \|u(t + \varepsilon) - u(t)\|_{H^{2}} +  \|u(t + \varepsilon)\|_{H^{2}}  \Big\| \partial_{t} u(t) - \frac{u(t + \varepsilon) - u(t)}{\varepsilon} \Big\|_{L^{2}}.
\end{split}
\end{equation}
Using the fact that $u \in C([0, T], H^{2}(M)) \cap C^{1}([0, T], L^{2}(M))$, we conclude that $E_{1}$ converges to zero
when $\varepsilon \rightarrow 0$. 

On the other hand, 
\[
E_{2} \leq  \|u(t)\|_{H^{2}}  \Big\| \partial_{t} u(t) - \frac{u(t + \varepsilon) - u(t)}{\varepsilon} \Big\|_{L^{2}}.
\]
and follows that  $E_{2}$ converges to zero
when $\varepsilon \rightarrow 0$. This shows that, 

\begin{equation}\label{FirstTimeEquality}
\begin{split}
\frac{1}{2}\frac{d}{dt} E(t) &= -   \int_{M}\Re \Big(\partial_{t} u(t) \Delta \bar{u}(t)\Big) dg.\\
\end{split}
\end{equation}

Now, write
 $u = \alpha + i \beta$. As $u$ satisfies  the first equation in \eqref{SDE12}, comparing the real and imaginary parts, we obtain
\begin{equation}\label{RealAndImaginaryDerivatives}
\begin{cases}
 \partial_{t} \alpha = - \Delta \beta + \beta v,  \\
 \partial_{t} \beta = \Delta \alpha - \alpha v.
\end{cases}
\end{equation}
Thus, from \eqref{FirstTimeEquality} we obtain that
\begin{equation}\label{FirstTimeEquality2}
\begin{split}
\frac{1}{2}\frac{d}{dt} E(t) &=-  \int_{M} (\partial_{t} \alpha \Delta \alpha  + \partial_{t} \beta \Delta \beta)dg \\
  &= -   \int_{M} v (\beta \Delta \alpha - \alpha \Delta \beta)dg, 
\end{split}
\end{equation}

where we used the relation \eqref{RealAndImaginaryDerivatives}. 

Now, using a similar reasoning used  to compute $\frac{d}{dt}E(t)$, one gets 
\begin{equation}\label{RegraDaCadeia}
\frac{1}{2} \frac{d}{dt} \int_{M} |u(t)|^{2} v(t) dg = \frac{1}{2} \int_{M} v(t) \partial_{t} (|u(t)|^{2}) dg+ \partial_{t} v |u(t)|^{2} dg.
\end{equation}
 Using $(\ref{RealAndImaginaryDerivatives})$ again, we obtain
\begin{equation}\label{1990}
\partial_{t} (|u(t)|^{2}) = 2 (\beta \Delta \alpha - \alpha \Delta \beta).
\end{equation}

Moreover, from the second equation in  $(\ref{SDE12})$, it follows that
\begin{equation}\label{1991}
\partial_{t} v = \frac{1}{\kappa} (-v + \lambda |u|^{2}).
\end{equation}

 Using  $(\ref{1990})$ and  $(\ref{1991})$ in $(\ref{RegraDaCadeia})$, we obtain
\begin{equation}\label{SecondTimeEquality}
\frac{1}{2} \frac{d}{dt} \int_{M} |u(t)|^{2} v(t) dg = \int_{M} v(\beta \Delta \alpha - \alpha \Delta \beta) dg- \frac{1}{2 \kappa} \int_{M} |u(t)|^{2} v(t)dg + \frac{\lambda}{2 \kappa} \int_{M}|u(t)|^{4} dg.
\end{equation}

Finally, adding the identities  $(\ref{FirstTimeEquality})$ and $(\ref{SecondTimeEquality})$ we find the desired equality
\[
 \frac{d}{dt}\left(\int_{M} |\nabla u(t)|_{g}^{2}dg + \int_{M} |u(t)|^{2} v(t) dg \right) = - \frac{1}{\kappa} \left(\int_{M} |u(t)|^{2} v(t) dg - \lambda  \int_{M} |u(t)|^{4} dg\right).
\]
\end{dem}

Now we will justify the passing to the limit in order to obtain the estimate 
\eqref{TimeDerivative} for $H^{1}$-solutions in dimension 2. Let $u_{0} \in H^{1}(M)$ and 
$v_{0} \in H^{1+}(M)$.  By Theorem \ref{TeorIntro5}, there exists a solution 
$u$ of the integro-differential equation \eqref{IntegroDifferentialFormulation},
such that $u \in C([0, T]; H^{1}(M)) \cap L^{p}([0, T], L^{\infty})$. 
By density, there are  sequences  $(u_{0}^{n})_{n} \subset H^{2}(M)$ such that $u_{0}^{n} \longrightarrow u_{0}$ in $H^{1}(M)$
and   $(v_{0}^{n})_{n} \subset H^{2}(M)$ such that $v_{0}^{n} \longrightarrow v_{0}$ in $H^{1+}(M)$. By persistence of regularity
(see item $(ii)$ Theorem \ref{TeorIntro5})
the $H^{2}$-solutions $(u^{n})_{n}$ exist in the time interval $[0, T]$ as well. Moreover, as a consequence 
of the continuous dependence (see item $(iii)$ Theorem \ref{TeorIntro5}), we have that 
$u^{n}(t) \longrightarrow u(t) $ in $H^{1}$ whenever  $ t \in [0, T]$. Integrating the identity \eqref{TimeDerivative}, we see that the sequence $(u^{n}(t))_{n}$ satisfies 
\begin{equation}\label{UnIdentity}
\begin{split}
 \int_{M} |\nabla u^{n}(t)|^{2} dg + \int_{M} |u^{n}(t)|^{2} v^{n}(t) dg & =\int_{M} |\nabla u^{n}_{0}|^{2} dg + \int_{M} |u^{n}_{0}|^{2} v^{n}_{0} dg +\\
  & \qquad+ \frac{1}{\kappa} \int_{0}^{t} \left(- \int_{M} |u^{n}(t')|^{2} v^{n}(t') dg + \lambda\int_{M} |u^{n}(t')|^{4} dg \right) dt'.\\
\end{split}
\end{equation}

Considering the identity \eqref{UnIdentity}, let 
\[
J_{1}^{n} := \left| \int_{M} |\nabla u(t)|^{2} dg -  \int_{M} |\nabla u^{n}(t)|^{2} dg\right|.
\]
Then, using \eqref{MassConservation} we can write
\[
J_{1}^{n} = \left|\|u^{n}(t)\|_{H^{1}}^{2} - \|u(t)\|_{H^{1}}^{2} \right| =  \left(\|u^{n}(t)\|_{H^{1}} + \|u(t)\|_{H^{1}} \right)  \Big|\|u^{n}(t)\|_{H^{1}} - \|u(t)\|_{H^{1}} \Big|.
\]
Using triangle inequality, we obtain
\[
J_{1}^{n} \leq  \left(\|u^{n}(t)\|_{H^{1}} + \|u(t)\|_{H^{1}} \right) \|u^{n}(t)- u(t)\|_{H^{1}} .
\]

By the continuous dependence of the solution on  the initial data, we have
\[
\|u^{n}(t)- u(t)\|_{H^{1}} \leq C \|u^{n}_{0}- u_{0}\|_{H^{1}}.
\]

Thus,
\begin{equation*}
\begin{split}
J_{1}^{n} &\leq  C \left(\|u^{n}(t)\|_{H^{1}} + \|u(t)\|_{H^{1}} \right) \|u^{n}_{0}- u_{0}\|_{H^{1}}\\
&\leq   C \left(\|u\|_{L^{\infty}([0,T];H^{1})} + \|u_{0}^{n} - u_{0}\|_{H^{1}} \right) \|u^{n}_{0}- u_{0}\|_{H^{1}},
\end{split}
\end{equation*}
and consequently  $J_{1}^{n} \longrightarrow  0$ when $n\longrightarrow \infty$, whenever $t \in [0, T]$.

Now, denote
\[
J_{2}^{n} := \left| \int_{M} |u^{n}(t)|^{2} v^{n}(t) dg  -  \int_{M}|u(t)|^{2} v(t) dg\right|.
\]
Using triangle inequality, it follows that
\begin{equation*}
\begin{split}
J_{2}^{n} &\leq \left| \int_{M} |u^{n}(t)|^{2} \Big(v^{n}(t) - v(t)\Big) dg\right| + \left| \int_{M} \Big(|u^{n}(t)|^{2} -|u(t)|^{2}\Big) v(t) dg\right|\\
&\leq \|u^{n}(t)\|_{L^{4}}^2 \|v^{n}(t) - v(t)\|_{L^{2}} + \|v(t)\|_{L^{\infty}} \Big| \|u_{0}^{n}\|_{L^{2}}^{2} - \|u_{0}\|_{L^{2}}^{2}\Big|.
\end{split}
\end{equation*}
Now, observe that
\begin{equation*}
\begin{split}
\|v^{n}(t) - v(t)\|_{L^{2}} & \leq e^{-t/\kappa} \|v^{n}_{0} - v_{0}\|_{L^{2}} + \int_{0}^{t} e^{\frac{-(t- \ell)}{\kappa}} \Big\| |u^{n}(\ell)|^{2} - |u(\ell)|^{2} \Big\|_{L^{2}} d \ell\\
&\leq e^{-t/\kappa} \|v^{n}_{0} - v_{0}\|_{L^{2}} + \int_{0}^{t} e^{\frac{-(t- \ell)}{\kappa}} \Big( \|u^{n}(\ell)\|_{L^{4}} + \|u(\ell)\|_{L^{4}} \Big)  \| u^{n}(\ell) - u(\ell)\|_{L^{4}} d \ell.
\end{split}
\end{equation*}

Recall from \eqref{VLinftyEstimate}, that
\[
\|v(t)\|_{L^{\infty}} \leq \|v_{0}\|_{L^{\infty}} + \frac{1}{\kappa} T^{1 - \frac{2}{p}}\|u\|_{L^{p}([0, T]; L^{\infty})}  < \infty .
\]
Thus, using the embedding $H^{1}(M^{2}) \subset L^{4}(M^{2})$
we conclude that $J^{n}_{2} \longrightarrow 0$ when $n \longrightarrow +\infty$.

The convergence for the first two terms in the RHS of \eqref{UnIdentity} is similar to $J^n_1$ and $J^n_2$ respectively, so we omit the details. 

Next, consider
\[
J_{3}^{n} : =  \left|\int_{0}^{t} \int_{M} |u^{n}(t')|^{2} v^{n}(t') dg dt'  -  \int_{0}^{t} \int_{M}|u(t')|^{2} v(t') dg dt'\right|.
\]
Using triangle inequality we can write
\[
J_{3}^{n} \leq  \int_{0}^{t} \left| J^{n}_{2}(t')\right| dt'.
\]
By  uniform convergence of the term $J^{n}_{2}$ in $[0, T]$ we conclude that 
$J^{n}_{3} \longrightarrow 0$ when $n \longrightarrow +\infty$.

Finally, we define
\[
J_{4}^{n} := \Big| \int_{0}^{t} \int_{M} |u^{n}(t')|^{4} dg dt' -  \int_{0}^{t} \int_{M} |u(t')|^{4} dg dt'\Big|.
\]

Thus, 
\begin{equation}
\begin{split}
J_{4}^{n} &\leq  \int_{0}^{t}\Big| \|u^{n}(t')\|_{L^{4}}^{4} - \|u(t')\|_{L^{4}}^{4}\Big| dt'\\
 &\leq  \int_{0}^{t} (\|u^{n}(t')\|_{L^{4}}^{2} + \|u(t')\|_{L^{4}}^{2})\Big| \|u^{n}(t')\|_{L^{4}}^{2} - \|u(t')\|_{L^{4}}^{2}\Big| dt'\\
  & \leq \Big( \sum_{ \ell = 0}^{3}\|u^{n}\|_{L^{\infty}([0, T], L^{4}(M))}^{3-\ell}  \|u\|_{L^{\infty}([0, T], L^{4}(M))}^{\ell}\Big)  \int_{0}^{t}\|u^{n}(t')- u(t')\|_{L^{4}}dt'\\
   & \leq  T\Big( \sum_{ \ell = 0}^{3}\|u^{n}\|_{L^{\infty}([0, T], L^{4}(M))}^{3-\ell}  \|u\|_{L^{\infty}([0, T], L^{4}(M))}^{\ell}\Big)\|u^{n}- u\|_{L^{\infty}([0, T], H^{1}(M))}  \\
   &\leq C T \Big(\sum_{ \ell = 0}^{3}\|u^{n}\|_{L^{\infty}([0, T], L^{4}(M))}^{3-\ell}  \|u\|_{L^{\infty}([0, T], L^{4}(M))}^{\ell}\Big) \|u_{0}^{n}- u_{0}\|_{H^{1}(M)} .\\
\end{split}
\end{equation}

Hence, $J_{4}^{n} \longrightarrow 0$ when $n \longrightarrow +\infty$ and completes the proof of \eqref{TimeDerivative} for $H^1$-solution.\\

Now, we derive an  \textit{a priori} estimate which is fundamental to prove the global well-posedness result.

\begin{prop}\label{PropAPrioriEstimate} If the $H^{1}$-solution $u$ of \eqref{IntegroDifferentialFormulation} exists in an interval 
$[0, T]$ satisfying  $$0 < T\leq \min \Big\{ 1 ,\frac{\kappa}{  12 C_{A_{opt}, B}\|u_{0}\|_{L^{2}}^{2}} \Big\}$$ 
then for all $t \in(0,T) $, we have
\begin{equation}\label{PrioriEstimate}
\|u(t)\|^{2}_{H^{1}} \leq 2 \|u_0\|_{H^{1}}^{2} +  18 C_{A_{opt}, B} \|u_{0}\|_{L^{2}}^{2}\|v_{0}\|_{L^{2}}^{2},
\end{equation}
 where $C_{A_{opt}, B} := \max\{ A_{opt}, B\}$.
\end{prop}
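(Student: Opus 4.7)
The proof proceeds by a bootstrap/continuity argument on $F(t) := \sup_{\tau \leq t}\|u(\tau)\|_{H^{1}(M)}^{2}$. Set $M_{0} := \|u_{0}\|_{L^{2}}^{2}$, $N_{0} := \|v_{0}\|_{L^{2}}^{2}$, $\mathcal{C} := C_{A_{opt},B}$, and $L := 2\|u_{0}\|_{H^{1}}^{2} + 18\, \mathcal{C} M_{0} N_{0}$. Define $t^{\ast} := \sup\{t \in [0,T] : F(t) \leq L\}$; the aim is to show $t^{\ast} = T$ by proving that under the bootstrap hypothesis $F(\tau) \leq L$ for $\tau \leq t$ one obtains an inequality of the form $F(t) \leq \|u_{0}\|_{H^{1}}^{2} + 9\, \mathcal{C} M_{0} N_{0} + \tfrac{1}{2}F(t)$, which rearranges to $F(t) \leq L$.

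First I would integrate the identity of Proposition \ref{PropertieOfSolution} over $[0,t]$ and take absolute values to reach
\begin{equation*}
\|\nabla u(t)\|_{L^{2}}^{2} \leq \|\nabla u_{0}\|_{L^{2}}^{2} + \Bigl|\int_{M}|u_{0}|^{2} v_{0}\,dg\Bigr| + \Bigl|\int_{M}|u(t)|^{2} v(t)\,dg\Bigr| + \frac{1}{\kappa}\int_{0}^{t}\Bigl(\Bigl|\int_{M}|u|^{2} v\,dg\Bigr| + \int_{M}|u|^{4}\,dg\Bigr)\,ds.
\end{equation*}
For each $\int|u|^{2} v$ I would use the weighted Young inequality $\bigl|\int|u|^{2} v\bigr| \leq (4\mathcal{C} M_{0})^{-1}\int|u|^{4} + \mathcal{C} M_{0}\|v\|_{L^{2}}^{2}$, and then the Gagliardo--Nirenberg inequality of Proposition \ref{GNM} (with $d=2$, $p=q=2$, $r=4$, giving $\theta=1/2$) together with mass conservation to obtain $\int_{M}|u|^{4}\,dg \leq \mathcal{C} M_{0}\|u\|_{H^{1}}^{2}$.

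The crucial $L^{2}$-bound on $v$ comes from the explicit ODE solution \eqref{ODEvsolution}: $\|v(t)\|_{L^{2}} \leq \|v_{0}\|_{L^{2}} + (1 - e^{-t/\kappa})\sup_{\ell \leq t}\|u(\ell)\|_{L^{4}}^{2}$. Using $1 - e^{-x} \leq \min(x,1)$ together with $T \leq \kappa/(12\mathcal{C} M_{0})$, a short case analysis according to whether $12\mathcal{C} M_{0} \geq 1$ or $< 1$ gives $2(1 - e^{-t/\kappa})^{2}\mathcal{C} M_{0} \leq 1/6$ throughout $[0,T]$. Hence under the bootstrap hypothesis $\mathcal{C} M_{0}\|v(\tau)\|_{L^{2}}^{2} \leq 2\mathcal{C} M_{0} N_{0} + F(\tau)/72$, and the worst-case pointwise estimate becomes $\bigl|\int|u(\tau)|^{2} v(\tau)\bigr| \leq F(\tau)/4 + F(\tau)/72 + 2\mathcal{C} M_{0} N_{0}$.

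Collecting the pieces: the two time integrals each carry an extra factor $T/\kappa \leq 1/(12\mathcal{C} M_{0})$, which when combined with the Gagliardo--Nirenberg bound on $\int|u|^{4}$ contributes roughly $F(t)/12$ each plus a term of order $\mathcal{C} M_{0} N_{0}$; the initial-data term is handled by $\bigl|\int_{M}|u_{0}|^{2} v_{0}\,dg\bigr| \leq \sqrt{\mathcal{C} M_{0} N_{0}\cdot F(0)} \leq F(0)/4 + \mathcal{C} M_{0} N_{0}$ via a weighted AM--GM. Summing, the coefficient of $F(t)$ on the right should come out strictly less than $1/2$, closing the bootstrap. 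The principal obstacle is the careful balancing involved: the $\int|u(t)|^{2} v(t)$ term sits outside any time integral, so its $F$-dependence is not automatically damped by $T$-smallness and must instead be tamed through the $\mathcal{C} M_{0}$-calibrated Young weight together with the ODE-based bound on $\|v\|_{L^{2}}$; this is precisely what dictates the appearance of the constants $12$ and $18$ in the statement.
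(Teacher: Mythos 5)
Your overall strategy coincides with the paper's: integrate the identity of Proposition \ref{PropertieOfSolution}, convert each $\int_M|u|^4\,dg$ into $C_{A_{opt},B}\,\|u_0\|_{L^2}^2\,\|u\|_{H^1}^2$ via Proposition \ref{GNM} and mass conservation, control $\|v\|_{L^2}$ through the explicit formula \eqref{ODEvsolution}, and absorb the $\|u\|_{H^1}^2$ contributions using the smallness of $T$. Your handling of the boundary term $\int_M|u(t)|^2v(t)\,dg$ (the $(4\mathcal{C}M_0)^{-1}$-calibrated Young weight together with the case analysis yielding $2(1-e^{-t/\kappa})^2(\mathcal{C}M_0)^2\le 1/72$) is correct, and is a tidier organization of what the paper does with Cauchy--Schwarz followed by a single Young step at \eqref{EQ51}.

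The gap is in the time-integrated coupling term $\tfrac1\kappa\int_0^t\bigl|\int_M|u|^2v\,dg\bigr|\,ds$. You assert it "contributes roughly $F(t)/12$ plus a term of order $\mathcal{C}M_0N_0$", but integrating your own pointwise estimate $F(s)/4+F(s)/72+2\mathcal{C}M_0N_0$ against $\tfrac1\kappa\int_0^t ds=\tfrac{t}{\kappa}\le\tfrac{1}{12\mathcal{C}M_0}$ gives $\tfrac{F(t)}{48\mathcal{C}M_0}+\tfrac{F(t)}{864\mathcal{C}M_0}+\tfrac{N_0}{6}$: the coefficient of $F(t)$ is unbounded as $\mathcal{C}M_0\to0$ and the $v_0$-contribution is $N_0/6$ rather than $O(\mathcal{C}M_0N_0)$, so the bootstrap does not close and the constant $18$ is out of reach. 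The point is that only the $\int|u|^4$ piece carries the compensating factor $\mathcal{C}M_0$ that cancels $\tfrac{1}{12\mathcal{C}M_0}$; the $\|v\|_{L^2}^2$ piece does not. The repair is to keep the factor $e^{-s/\kappa}$ multiplying $v_0$ in \eqref{ODEvsolution} instead of bounding it by $1$: then
\begin{equation*}
\frac1\kappa\int_0^t\|u(s)\|_{L^4}^2\|v(s)\|_{L^2}\,ds\le\sqrt{\mathcal{C}M_0F(t)N_0}\cdot\frac1\kappa\int_0^te^{-s/\kappa}ds+\mathcal{C}M_0F(t)\cdot\frac{t}{\kappa}\le\sqrt{\mathcal{C}M_0F(t)N_0}+\frac{F(t)}{12},
\end{equation*}
and the first term Young-splits into $\tfrac{F(t)}{4}+\mathcal{C}M_0N_0$. (For comparison, the paper sidesteps the difficulty: in \eqref{EQ31} the term $I_3$ appears without the factor $\tfrac1\kappa$ and is therefore only multiplied by $T\le1$ in \eqref{EQ44}.) Note also that even after this repair your itemized coefficients of $F(t)$ sum to $49/72>1/2$ if the initial-data term $F(0)/4$ is majorized by $F(t)/4$, so the Young weights must be recalibrated (or $F(0)/4$ kept as $\|u_0\|_{H^1}^2/4$) to land on the stated constants $2$ and $18$.
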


\begin{proof} Integrating  $(\ref{TimeDerivative})$ in $(0,t) \subset [0, T]$ , we get
\begin{equation}\label{EQ29}
\begin{split}
 \int_{M} |\nabla u(t)|^{2} dg + \int_{M} |u(t)|^{2} v(t) dg & =\int_{M} |\nabla u_{0}|^{2} dg + \int_{M} |u_{0}|^{2} v_{0} dg +\\
  & \qquad+ \frac{1}{\kappa} \int_{0}^{t} \left(- \int_{M} |u(t')|^{2} v(t') dg + \lambda\int_{M} |u(t')|^{4} dg \right) dt'.\\
\end{split}
\end{equation}

Using  $L^{2}$-norm conservation \eqref{MassConservation}, it follows from $(\ref{EQ29})$ that
\begin{equation}\label{EQ30}
\begin{split}
\|u(t)\|_{H^{1}}^{2}& = \|u_{0}\|_{H^{1}}^{2}+ \int_{M} |u_{0}|^{2} v_{0} dg - \int_{M} |u(t)|^{2} v(t) dg\\
  & \qquad - \frac{1}{\kappa} \int_{0}^{t}  \int_{M} |u(t')|^{2} v(t') dg dt' + \frac{\lambda}{\kappa} \int_{0}^{t} \int_{M} |u(t')|^{4} dg  dt'.\\
\end{split}
\end{equation}
Thus,
\begin{equation}\label{EQ31}
\begin{split}
\|u(t)\|_{H^{1}}^{2}&\leq  \|u_{0}\|_{H^{1}}^{2} + \int_{M} |u(t)|^{2} |v(t)| dg + \int_{M} |u_{0}|^{2} |v_{0}| dg +\\
  & \qquad + \int_{0}^{t}  \int_{M} |u(t')|^{2} |v(t')| dg dt' +  \frac{1}{\kappa} \int_{0}^{t} \int_{M} |u(t')|^{4} dg  dt'\\
  & =:\|u_{0}\|_{H^{1}}^{2} + I_{1} + I_{2} + I_{3} + I_{4} .
\end{split}
\end{equation}

In what follows we estimate each term $I_{j}$ ($j$ = 1,\ldots,4) in $(\ref{EQ31})$.

\begin{enumerate}
\item[$\bullet$]
Using Cauchy-Schwarz inequality, we have
\begin{equation}\label{EQ32}
I_{1} \leq \|u(t)\|_{L^{4}}^{2} \|v(t)\|_{L^{2}}.
\end{equation}

Using  Minkowski's inequality  and the Cauchy-Schwarz inequality in \eqref{ODEvsolution}, we conclude that
\begin{equation}\label{EQ33}
\begin{split}
\|v(t)\|_{L^{2}} &\leq  \|v_{0}\|_{L^{2}} + \frac{1}{\kappa}\int_{0}^{t}\| |u(t')|^{2}\|_{L^{2}} dt'\\
  & \leq\|v_{0}\|_{L^{2}} + \frac{1}{\kappa} \int_{0}^{T}\| u(t')\|_{L^{4}}^{2} dt' \\
  & \leq\|v_{0}\|_{L^{2}} + \frac{1}{\kappa} \left(\int_{0}^{T}\| u(t')\|_{L^{4}}^{4} dt'\right)^{1/2} T^{1/2}. \\
\end{split}
\end{equation}
This shows that it is necessary to introduce a suitable bound for the $L^{4}$ norm of $u$. Now,
considering  $d = 2$, $r = 4$ and  $p = 2 = q$ in the Proposition  $\ref{GNM}$, we have
\[
\theta = \frac{ 4 ( 4 - 2 )}{4 (2 (2 -2) + 4)} = \frac{2}{(0 + 4)} = \frac{1}{2}.
\]
Thus, it follows from  $(\ref{EQ34})$ that
\begin{equation}\label{EQ35}
\|u\|_{L^{4}}^{4} \leq  \left( A_{opt} \| \nabla u\|_{L^{2}}^{2} + B \|u\|_{L^{2}}^{2} \right) \|u\|_{L^{2}}^{2}.
\end{equation}

 So, denoting $C_{A_{opt}, B} : = \max \{ A_{opt}, B \}$, we obtain
\begin{equation}\label{EQ36}
\|u\|_{L^{4}}^{4} \leq C_{A_{opt}, B}  \|u\|_{H^{1}}^{2} \|u\|_{L^{2}}^{2}.
\end{equation}

Therefore,
%===========================================================================================================================================
\begin{equation}\label{EQ37}
\|u\|_{L^{4}} \leq  C_{A_{opt}, B}^{1/4}  \|u\|_{H^{1}}^{1/2} \|u\|_{L^{2}}^{1/2} .
\end{equation}
%==============================================================================================================================================

Using $(\ref{EQ36})$ in $(\ref{EQ33})$ and the $L^{2}$-norm conservation \eqref{MassConservation}, we obtain
\begin{equation}\label{EQ38}
\begin{split}
\|v(t)\|_{L^{2}} & \leq\|v_{0}\|_{L^{2}} + \frac{1}{\kappa} \left(\int_{0}^{T}   C_{A_{opt}, B}  \|u(t')\|_{H^{1}}^{2} \|u_{0}\|_{L^{2}}^{2}  dt'\right)^{1/2} T^{1/2} \\
& \leq \|v_{0}\|_{L^{2}} + \frac{T}{\kappa} C_{A_{opt}, B}^{1/2}   \Theta_{T}  \|u_{0}\|_{L^{2}},
\end{split}
\end{equation}
where
\begin{equation}\label{ThetaT}
\Theta_{T}:= \displaystyle\sup_{0 \leq t' \leq T} \| u(t')\|_{H^{1}(M)}.
\end{equation}

 On the other hand, from $(\ref{EQ36})$  and $(\ref{ThetaT})$ we get
\begin{equation}\label{EQ39}
\|u(t)\|_{L^{4}}^{2} \leq  C_{A_{opt}, B}^{1/2}  \Theta_{T}  \|u_{0}\|_{L^{2}}.
\end{equation}

Inserting  the estimates  $(\ref{EQ38})$ and  $(\ref{EQ39})$ in $(\ref{EQ32})$, we get
\begin{equation}\label{EQ40}
\begin{split}
I_{1} &\leq \Big( \|v_{0}\|_{L^{2}} + \frac{T}{\kappa} C_{A_{opt}, B}^{1/2} \Theta_{T} \|u_{0}\|_{L^{2}}\Big) \Big(C_{A_{opt}, B}^{1/2} \Theta_{T} \|u_{0}\|_{L^{2}} \Big)\\
& =  C_{A_{opt}, B}^{1/2} \Theta_{T} \|u_{0}\|_{L^{2}} \|v_{0}\|_{L^{2}} +  \frac{T}{\kappa} \Theta_{T}^{2}  C_{A_{opt}, B} \|u_{0}\|_{L^{2}}^{2}.
\end{split}
\end{equation}

\item[$\bullet$]  Now, for the term  $I_{2}$ we use Cauchy-Schwarz inequality, to obtain
\begin{equation}\label{EQ42}
I_{2} = \int_{M} |u_{0}|^{2} |v_{0}| dg \leq \|u_{0}\|_{L^{4}}^{2} \|v_{0}\|_{L^{2}}.
\end{equation}
Using $(\ref{EQ37})$, it follows from $(\ref{EQ42})$ that
\begin{equation}\label{EQ43}
I_{2}  \leq C_{A_{opt}, B}^{1/2} \Theta_{T} \|u_{0}\|_{L^{2}}  \|v_{0}\|_{L^{2}}.
\end{equation}

\item[$\bullet$] To estimate $I_{3}$, we  use the bound  for the term  $I_{1}$ in $(\ref{EQ40})$, and get
\begin{equation}\label{EQ44}
\begin{split}
I_{3} & \leq \int_{0}^{T} \Big( C_{A_{opt}, B}^{1/2} \Theta_{T} \|u_{0}\|_{L^{2}} \|v_{0}\|_{L^{2}} +  \frac{T}{\kappa} \Theta_{T}^{2}  C_{A_{opt}, B} \|u_{0}\|_{L^{2}}^{2} \Big)dt'\\
& \leq  C_{A_{opt}, B}^{1/2} T\Theta_{T} \|u_{0}\|_{L^{2}} \|v_{0}\|_{L^{2}}  +  \frac{T^{2}}{\kappa} \Theta_{T}^{2}  C_{A_{opt}, B} \|u_{0}\|_{L^{2}}^{2}.
\end{split}
\end{equation}

\item[$\bullet$]
To bound the term $I_{4}$, we use $(\ref{EQ36})$, and obtain 
\begin{equation}\label{EQ47}
\begin{split}
I_{4}  &\leq \frac{1}{\kappa} \int_{0}^{T} \|u(t')\|_{L^{4}}^{4} dt' \\
&\leq C_{A_{opt}, B} \frac{1}{\kappa} \int_{0}^{T} \|u(t')\|_{H^{1}}^{2} \|u_{0}\|_{L^{2}}^{2}  dt' \\
 &\leq T \frac{1}{\kappa} C_{A_{opt}, B}  \Theta_{T}^{2} \|u_{0}\|_{L^{2}}^{2} . \\
\end{split}
\end{equation}
\end{enumerate}

Recalling  $(\ref{EQ31})$ and using $(\ref{EQ40})$, $(\ref{EQ43})$, $(\ref{EQ44})$ and $(\ref{EQ47})$, we get
\begin{equation}\label{EQ48}
\begin{split}
\|u(t)\|_{H^{1}}^{2} &\leq \|u_{0}\|_{H^{1}}^{2} + \Big(  C_{A_{opt}, B}^{1/2} \Theta_{T} \|u_{0}\|_{L^{2}} \|v_{0}\|_{L^{2}} +  \frac{T}{\kappa} \Theta_{T}^{2}  C_{A_{opt}, B} \|u_{0}\|_{L^{2}}^{2} \Big)\\
 & \quad+ \Big( C_{A_{opt}, B}^{1/2} \Theta_{T} \|u_{0}\|_{L^{2}}  \|v_{0}\|_{L^{2}}\Big)   \\
 &\quad+ \Big(  C_{A_{opt}, B}^{1/2} T\Theta_{T} \|u_{0}\|_{L^{2}} \|v_{0}\|_{L^{2}}  +   \frac{T^{2}}{\kappa} \Theta_{T}^{2}  C_{A_{opt}, B} \|u_{0}\|_{L^{2}}^{2} \Big) \\
 & \quad+  \Big(  \frac{T}{\kappa} C_{A_{opt}, B}  \Theta_{T}^{2} \|u_{0}\|_{L^{2}}^{2} \Big).
\end{split}
\end{equation}

Thus, using  $T \leq 1$, and further simplifying the estimate $(\ref{EQ48})$, we obtain
\begin{equation}\label{EQ50}
\begin{split}
\|u(t)\|_{H^{1}}^{2} &\leq \|u_{0}\|_{H^{1}}^{2} + 3 C_{A_{opt}, B}^{1/2} \Theta_{T} \|u_{0}\|_{L^{2}} \|v_{0}\|_{L^{2}} +   \frac{T}{\kappa} \Theta_{T}^{2}  C_{A_{opt}, B} \|u_{0}\|_{L^{2}}^{2} \\
& \qquad+   \frac{T^{2}}{\kappa} \Theta_{T}^{2}  C_{A_{opt}, B} \|u_{0}\|_{L^{2}}^{2}  +  \frac{T}{\kappa} C_{A_{opt}, B}  \Theta_{T}^{2} \|u_{0}\|_{L^{2}}^{2}\\
&\leq \|u_{0}\|_{H^{1}}^{2} + 3 C_{A_{opt}, B}^{1/2} \Theta_{T} \|u_{0}\|_{L^{2}} \|v_{0}\|_{L^{2}} +  T \Big(  \frac{3}{\kappa} C_{A_{opt}, B} \|u_{0}\|_{L^{2}}^{2} \Big) \Theta_{T}^{2}.
\end{split}
\end{equation}

Applying the inequality $ab \leq \varepsilon a^{2} + \displaystyle \frac{b^{2}}{4 \varepsilon}$  $(\forall \varepsilon > 0)$, with $\varepsilon = 1/4$
 in the second term in the right hand side of $(\ref{EQ50})$, yields
\begin{equation}\label{EQ51}
 3 C_{A_{opt}, B}^{1/2} \Theta_{T} \|u_{0}\|_{L^{2}} \|v_{0}\|_{L^{2}} \leq \frac{\Theta_{T}^{2}}{4} + \Big( 9 C_{A_{opt}, B}  \|u_{0}\|_{L^{2}}^{2} \|v_{0}\|_{L^{2}}^{2}\Big).
\end{equation}

Thus, if
\[
0 < T \leq \min \Big\{ 1,\frac{\kappa}{  12 C_{A_{opt}, B} \|u_{0}\|_{L^{2}}^{2}} \Big\},
\]
it follows from $(\ref{EQ50})$ and  $(\ref{EQ51})$ that
\begin{equation}\label{EQ52}
\Theta_{T}^{2} \leq \|u_{0}\|_{H^{1}}^{2} + \frac{1}{2} \Theta_{T}^{2} +  9 C_{A_{opt}, B}  \|u_{0}\|_{L^{2}}^{2} \|v_{0}\|_{L^{2}}^{2},
\end{equation}
which gives the desired  estimate $(\ref{PrioriEstimate})$.
\end{proof}

\subsection{Proof of the global well-posedness result}
In this subsection we use the \textit{a priori} estimate $(\ref{PrioriEstimate})$  to prove the global well-posedness result stated in the Theorem $\ref{TeorIntro7}$.

\begin{proof}[Proof of Theorem \ref{TeorIntro7}] By  Theorem $\ref{TeorIntro5}$, we know that for any  $u_{0} \in H^{1}$ and $v_{0} \in H^{1+}$ (which must be seen as being fixed) there exist  $T =  T(\|u_{0}\|_{H^{1}}, \|v_{0}\|_{H^{1^{+}}}, \kappa) > 0$ and  a unique  $H^{1}$-solution  $u \in X_{T}$  of \eqref{IntegroDifferentialFormulation}. Let $ T^{\ast} > 0$ be the maximal time of existence for which

\begin{equation}\label{uMaxT}
u \in C([0,T^{\ast}), H^{1}(M^{2})) \cap L^{p}([0,T^{\ast}), L^{\infty}(M^{2})).
\end{equation}
 We want to show that  $T^{\ast} = + \infty$. Suppose by contradiction that  $T^{\ast} < + \infty$. Let $t_{0} \geq 0$ be fixed, such that
\[
0 < T^{\ast} - t_{0} <  \min \Big\{ 1, \frac{\kappa}{ 12 C_{A_{opt}, B} \|u_{0}\|_{L^{2}}^{2}} \Big\}.
\]

 Choose an increasing sequence $ \{(t_{n})_{n \in \mathbb{N}}\} \subset (t_{0}, T^{\ast})$, with $t_{n} \nearrow T^{\ast}$. Since $t_{n + 1} - t_{0} < T^{\ast} - t_{0}$, we can apply the \textit{a priori} estimate $(\ref{PrioriEstimate})$ over any interval of the form  $ I_{n}:=[t_{0}, t_{n + 1}]$, to obtain
\begin{equation}\label{EQ52}
\|u(t_{n})\|_{H^{1}}^{2} \leq 2 \|u(t_{0})\|_{H^{1}}^{2} +  F(t_0),
\end{equation}
where $ F(t) : = 18 C_{A_{opt}, B} \|u(t)\|_{L^{2}}^{2} \|v(t)\|_{L^{2}}^{2}$.

 Consider the problem
 \begin{equation}\label{EQ53}
\begin{cases}
i \partial_{t} \tilde{u}+ \Delta_{g} \tilde{u} = \tilde{u}  e^{- \frac{t}{\kappa}} v_{0}(x) + \displaystyle \frac{\lambda}{\kappa} \tilde{u} \int_{0}^{t} e^{- \frac{(t - t')}{\kappa}} |\tilde{u}(t')|^{2} d t' , \qquad{ }   \\
\tilde{u}(0) = u(t_{n}) \in H^{1}(M^{2}).\\
\end{cases}
\end{equation}

Recall from \eqref{Choice-T2} that the function $[0, \infty) \rightarrow (0, \infty)  $, $ R \mapsto T(R, \|v_{0}\|_{H^{1+}}, \kappa)$, where $R$ depends on $\|u_{0}\|_{H^{1}}$ is a decreasing function and gives the time of existence of a solution $u$. In the case of the time existence to \eqref{EQ53}, 
we consider $R_{n}: = \|u(t_{n})\|_{H^{1}}$. From \eqref{EQ52}, we obtain $R_{n} \leq R_{0}$ for all $n \in \mathbb{N}$, where
\[
 R_{0} : = \sqrt{2 \|u(t_{0})\|_{H^{1}}^{2} +  F(t_0)} . 
\]

Thus, if  $T_{n} :=T(R_{n}, \|v_{0}\|_{H^{1+}}, \kappa)$ denotes the time of local existence for the solution 
of \eqref{EQ53}, (which is given by Theorem \ref{TeorIntro5}) we have
\begin{equation}\label{Tn}
0 < c_{0}:=T(R_{0}, \|v_{0}\|_{H^{1+}}, \kappa)\leq T_{n} \qquad \mbox{for all} \qquad  n \in \mathbb{N},
\end{equation}
 with $\tilde{u} \in C([0, T_{n}], H^{1}(M)) \cap L^{p}([0, T_{n}], L^{\infty}(M))$ being the unique solution to $(\ref{EQ53})$. 
 Choose  $n_{0}$  in such a way  that  $ T^{\ast} - t_{n_{0}} <  c_{0}$ and consider the functions defined by
\begin{equation}\label{uThec}
\breve{u}(t) := \begin{cases}
  u(t)  \quad\qquad{ }& \mbox{ if } \qquad 0 \leq t \leq t_{n_{0}},  \\
  \tilde{u}(t - t_{n_{0}}) &\mbox{ if } \qquad t_{n_{0}} \leq t \leq t_{n_{0}} + T_{n_{0}},\\
\end{cases}
\end{equation}
and
\begin{equation}\label{vThec}
\breve{v}(t) := \begin{cases}
  v(t) \quad \qquad &\mbox{ if } \qquad 0 \leq t \leq t_{n_{0}},  \\
  \tilde{v}(t - t_{n_{0}})&  \mbox{ if } \qquad t_{n_{0}} \leq t \leq t_{n_{0}} + T_{n_{0}},\\
\end{cases}
\end{equation}
where, for  $0 \leq \ell \leq T_{n_{0}}$
\begin{equation}\label{Tildev}
\tilde{v}(\ell) = e^{- \frac{\ell}{\kappa}} \tilde{v}(0) + \int_{0}^{\ell} e^{-\frac{(\ell - t')}{\kappa}} |\tilde{u}(t')|^{2} dt'.
\end{equation}
The function $\tilde{v}$ defined here is  not to be confused with the nonlinear part of the auxiliar problem \eqref{EQ53}.

 By compatibility, we must have $\tilde{v}(0) = v(t_{n_{0}})$.  Using definitions \eqref{uThec} and \eqref{vThec}, it is easy to see that $\breve{u}$ satisfies  the integral equation
\[
\breve{u}(t) = S(t)u_{0} -  i \int_{0}^{t} S(t - \ell) \breve{u}(\ell) \breve{v}(\ell) d \ell,
\]
  for  $0 \leq t \leq t_{n_{0}} + T_{n_{0}}$.  
  
 First, we show that 
 $
\breve{u} \in L^{p}([0,t_{n_{0}} + T_{n_{0}}],L^{\infty}(M))$. Using the definitions of $u$ and  $\breve{u}$, we can write
\[
\|\breve{u}\|_{L^{p}([0, t_{n_{0}} + T_{n_{0}}], L^{\infty}(M))}^{p} = \int_{0}^{t_{n_{0}}} \|u(t')\|_{L^{\infty}(M)}^{p} dt' +  \int_{t_{n_{0}}}^{t_{n_{0}} + T_{n_{0}}} \|\breve{u}(t')\|_{L^{\infty}(M)}^{p} dt'.
\]

 Since $u \in L^{p}([0,t_{n_{0}}],L^{\infty}(M))$, we have  $ \int_{0}^{t_{n_{0}}} \|u(t')\|_{L^{\infty}(M)}^{p} dt'< \infty$. Performing a change of variables  $ \ell = t' - t_{n_{0}}$, we get
\[
\int_{t_{n_{0}}}^{t_{n_{0}} + T_{n_{0}}} \|\breve{u}(t')\|_{L^{\infty}(M)}^{p} dt' = \int_{t_{n_{0}}}^{t_{n_{0}} + T_{n_{0}}} \|\tilde{u}(t' - t_{n_{0}})\|_{L^{\infty}(M)}^{p} dt' = \int_{0}^{ T_{n_{0}}} \|\tilde{u}(\ell)\|_{L^{\infty}(M)}^{p} d\ell < \infty,
\]
as required.

Now, to show that  $\breve{u} \in C([0, t_{n_{0}} + T_{n_{0}}], H^{1}(M)) $, it suffices to verify continuity at
  $t_{n_{0}}$, which is the ``gluing point". To show continuity from the right, we prove
\[
 \|\breve{u}(t_{n_{0}} + \varepsilon) - \breve{u}(t_{n_{0}})\|_{H^{1}} \longrightarrow 0,
\]
when $\varepsilon \longrightarrow 0^{+}$. Note that
\begin{equation}\label{EQ54}
\breve{u}(t_{n_{0}} + \varepsilon) = \tilde{u}(\varepsilon) = S(\varepsilon) \tilde{u}(0) -  i \int_{0}^{\varepsilon} S(t_{n_{0}}- \ell) \tilde{u}(\ell)  \tilde{v}(\ell) d \ell,
\end{equation}
where
\begin{equation}\label{EQ55}
\tilde{u}(0) = u(t_{n_{0}}) = S(t_{n_{0}}) u_{0} -  i \int_{0}^{t_{n_{0}}} S(t_{n_{0}}- \ell) u(\ell)v(\ell) d \ell.
\end{equation}

Thus, it follows from  $(\ref{EQ54})$ and $(\ref{EQ55})$ that
\begin{equation}\label{EQ56}
\breve{u}(t_{n_{0}} + \varepsilon) =  S(t_{n_{0}} + \varepsilon) u_{0} -  i \int_{0}^{t_{n_{0}}} S(t_{n_{0}} + \varepsilon- \ell) u(\ell)v(\ell) d \ell
-  i \int_{0}^{\varepsilon} S(t_{n_{0}}- \ell) \tilde{u}(\ell)  \tilde{v}(\ell) d \ell.
\end{equation}

On the other hand, using  \eqref{uThec} we have
\begin{equation}\label{EQ57}
\breve{u}(t_{n_{0}}) = u(t_{n_{0}}) = S(t_{n_{0}}) u_{0} - i  \int_{0}^{t_{n_{0}}} S(t_{n_{0}} - \ell) u(\ell) v(\ell) d \ell.
\end{equation}

Subtracting $(\ref{EQ56}) - (\ref{EQ57})$ in the  $H^{1}$ norm, and using that $S(t)$ is an isometry in  $H^{1}$, we obtain
\begin{equation}\label{EQ58}
\begin{split}
\|\breve{u}(t_{n_{0}} + \varepsilon) - \breve{u}(t_{n_{0}})\|_{H^{1}} &\leq \|S(\varepsilon) u_{0} -  u_{0}\|_{H^{1}} +  \int_{0}^{t_{n_{0}}} \|S(\varepsilon)  u(\ell) v(\ell) - u(\ell) v(\ell) \|_{H^{1}} d \ell + \int_{0}^{\varepsilon}\| \tilde{u}(\ell)  \tilde{v}(\ell)\|_{H^{1}} d \ell \\
& =: A_{1}(\varepsilon) + A_{2}(\varepsilon) + A_{3}(\varepsilon). \\
\end{split}
\end{equation}

As $\{ S(t) \}_{t \in \mathbb{R}}$ is a continuous semigroup,
\begin{equation}\label{EQ59}
A_{1}(\varepsilon) \longrightarrow 0 \mbox{ } \mbox{ when } \mbox{ } \varepsilon \longrightarrow 0^{+}.
\end{equation}

To prove the convergence of the  term $A_{2}(\varepsilon)$ we  use  the dominated convergence theorem. First, using the semigroup property, one has
\begin{equation}\label{EQ60}
\|S(\varepsilon)  u(\ell) v(\ell) - u(\ell) v(\ell) \|_{H^{1}}  \longrightarrow 0 \mbox{ } \mbox{ when } \mbox{ } \varepsilon \longrightarrow 0^{+},
\end{equation}
pointwise, for  $\ell \in (0, t_{n_{0}})$. Now, observe that
\begin{equation}\label{EQ61}
\| u(\ell)  v(\ell)\|_{H^{1}} \leq C \| u\|_{ L^{\infty}([0, t_{n_{0}}],H^{1})} \|v(\ell)\|_{L^{\infty}} + C \|u(\ell)\|_{L^{\infty}} \|v\|_{ L^{\infty}([0, t_{n_{0}}],H^{1})}.
\end{equation}

Thus, 
\begin{equation}\label{EQ62}
\int_{0}^{t_{n_{0}}}\| u(\ell)  v(\ell)\|_{H^{1}} d \ell  \leq   C \| u\|_{ L^{\infty}([0, t_{n_{0}}],H^{1})} \|v\|_{ L^{1}([0, t_{n_{0}}],L^{\infty})} + C \|u\|_{ L^{1}([0, t_{n_{0}}],L^{\infty})} \|v\|_{ L^{\infty}([0, t_{n_{0}}],H^{1})}.
\end{equation}
Using \eqref{uMaxT} and Corollary \ref{v(t)Persistence} (persistence property for $v$), we conclude that the RHS of \eqref{EQ62} is finite. Thus, 
the function $\ell \mapsto \| u(\ell)  v(\ell)\|_{H^{1}}$ for $\ell \in (0,t_{n_{0}})$ is integrable and  
 applying the dominated convergence theorem, we conclude that 
$\lim_{\varepsilon \rightarrow 0^{+}} A_{2}(\varepsilon) = 0$ .

Now, notice that 
\[
A_{3}(\varepsilon) =  \int_{0}^{T_{n_{0}}} \chi_{[0, \varepsilon]}(\ell) \|\tilde{u}(\ell)  \tilde{v}(\ell)\|_{H^{1}} d \ell.
\]
To use the reasoning as in the term $A_{2}$, first note that $\tilde{u} \in X_{T_{n_{0}}} := C([0,T_{n_{0}}], H^{1}(M)) \cap L^{p}([0, T_{n_{0}}],L^{\infty}(M))$.
As a consequence of persistence of $\tilde{v}$ we also have $\tilde{v} \in X_{T_{n_{0}}}$.
Therefore, using the dominated convergence theorem, we conclude that 
$\lim_{\varepsilon \rightarrow 0^{+}}  A_{3}(\varepsilon) = 0$.

Next, to show continuity from the left, we must show
\[
 \| \breve{u}(t_{n_{0}} - \varepsilon) - \breve{u}(t_{n_{0}})\|_{H^{1}} \longrightarrow 0,
\]
when $\varepsilon \longrightarrow 0^{+}$. By the definition of $\breve{u}$, we have:
\begin{equation}\label{EQ67}
\breve{u}(t_{n_{0}}) = S(t_{n_{0}}) u_{0} -  i \int_{0}^{t_{n_{0}}} S(t_{n_{0}}-\ell) \tilde{u}(\ell)  \tilde{v}(\ell) d \ell.
\end{equation}
and
\begin{equation}\label{EQ68}
\breve{u}(t_{n_{0}} -\varepsilon) = S(t_{n_{0}} -\varepsilon) u_{0} -  i \int_{0}^{t_{n_{0}} -\varepsilon} S(t_{n_{0}}- \varepsilon -\ell) \tilde{u}(\ell)  \tilde{v}(\ell) d \ell.
\end{equation}
Thus, performing the subtraction  (\ref{EQ67}) $-$ (\ref{EQ68}), one has
\begin{equation}\label{EQ69}
\begin{split}
 \| \breve{u}(t_{n_{0}}) - \breve{u}(t_{n_{0}} - \varepsilon) \|_{H^{1}} &\leq  \|S(t_{n_{0}} -\varepsilon) u_{0} - S(t_{n_{0}}) u_{0}\|_{H^{1}} +   \\
 & \qquad+ \Big\| \int_{0}^{t_{n_{0}} - \varepsilon} S(t_{n_{0}} - \varepsilon - \ell) u(\ell) v(\ell) d \ell -  \int_{0}^{t_{n_{0}}} S(t_{n_{0}}  - \ell) u(\ell) v(\ell) d \ell  \Big\|_{H^{1}}\\
& =:B_{1}(\varepsilon) + B_{2}(\varepsilon). \\
\end{split}
\end{equation}

Clearly, $\lim_{\varepsilon \rightarrow 0^{+}} B_{1}(\varepsilon) = 0$. Now, for the term $B_{2}(\varepsilon)$, we have
\begin{equation}
\begin{split}
B_{2}(\varepsilon) &\leq   \Big\| \int_{0}^{t_{n_{0}}} S(t_{n_{0}} - \varepsilon - \ell) u(\ell) v(\ell) d \ell  - \int_{0}^{t_{n_{0}} - \varepsilon} S(t_{n_{0}} - \varepsilon - \ell) u(\ell) v(\ell) d \ell  \Big\|_{H^{1}}  \\
 & \qquad+  \Big\| \int_{0}^{t_{n_{0}}} S(t_{n_{0}} - \varepsilon - \ell) u(\ell) v(\ell) - S(t_{n_{0}}  - \ell) u(\ell) v(\ell) d \ell  \Big\|_{H^{1}}. \\
\end{split}
\end{equation}

Thus,
\begin{equation}\label{EQ70}
 B_{2}(\varepsilon)\leq  \int_{t_{n_{0}} - \varepsilon}^{t_{n_{0}}} \Big\| u(\ell) v(\ell)  \Big\|_{H^{1}} d \ell
 + \int_{0}^{t_{n_{0}}} \Big\|  S( - \varepsilon ) u(\ell) v(\ell) -  u(\ell) v(\ell) \Big\|_{H^{1}} d \ell.
\end{equation}

Using the dominated convergence theorem, we conclude that $\lim_{\varepsilon\rightarrow 0^{+}}  B_{2}(\varepsilon) = 0 $. Hence,
\[
\lim_{\varepsilon\rightarrow 0^{+}} \| \breve{u}(t_{n_{0}}) - \breve{u}(t_{n_{0}} - \varepsilon) \|_{H^{1}} = 0,
\]
 and consequently
\[
 \breve{u} \in C([0, t_{n_{0}} + T_{n_{0}}], H^{1}(M)) \cap L^{p}([0, t_{n_{0}} + T_{n_{0}}], L^{\infty}(M)).
\]
 This shows that, $\breve{u}$ is a unique solution to the  integro-differential  equation in \eqref{IntegroDifferentialFormulation} on the interval $[0, t_{n_{0}} + T_{n_{0}}]$  which contradicts the maximality of $T^{\ast}$ because in view of our choice of $n_{0}$,
\[ 
 T^{\ast} < t_{n_{0}} + c_{0}
\]  
 and by \eqref{Tn}, we have
\[ 
 T^{\ast} < c_{0} + t_{n_{0}} \leq T_{n_{0}} + t_{n_{0}} .
\]  
 So, we must have that $T^{\ast} = +\infty$, i.e, $u$ is a global solution.
\end{proof}

%%%%%%%%%%%%%%%%%%%%%
%%%%%%%%%%%%%%%%%%%%%%%%

\begin{obs}
In dimension $2$, let us compare our global well-posedness result in $H^{1}(M)$  for  the IVP  $(\ref{SDE12})$ with that of the 
cubic NLS equation \eqref{NLS} in the focusing case, i. e.,
\begin{equation}\label{3FocNLS}
i \partial_{t} u + \Delta u = - |u|^{2} u ; \qquad u(0, x) = u_{0}(x). 
\end{equation}
Recall that, in the defocusing  case using energy conservation law,  the $H^1$--local solution to the cubic NLS equation can be extended globally in time, i.e.,  $u \in C([0, T];H^{1}(M))$ for any bounded $T>0$ (see \cite{BGT}, pg. 571). Nevertheless, in the focusing case, the solution $u(t, \cdot)$ of equation \eqref{3FocNLS} has
the $H^1$-- conserved quantity 
\begin{equation}\label{HamDim2}
E(u(t)) := \frac{1}{2}\|\nabla u(t)\|_{L^{2}}^{2} - \frac{1}{4}\| u(t)\|_{L^{4}}^{4} = E(u_{0}),
\end{equation}
which shows that  $E(u)$ does
not control the $H^{1}$--norm of the solution. In order to overcome this problem, 
note that from \eqref{HamDim2}, one has
\begin{equation}\label{EQ76}
\|\nabla u(t)\|_{L^{2}}^{2} =  \frac{1}{2}\| u(t)\|_{L^{4}}^{4} + 2 E(u_{0}). 
\end{equation}
Now, applying \eqref{EQ35} (which is a consequence of the sharp Gagliardo-Nirenberg inequality \eqref{EQ34}) in \eqref{EQ76}, it follows  that
\begin{equation}\label{EQ77}
\|\nabla u(t)\|_{L^{2}}^{2} \leq A_{0} \|\nabla u(t)\|_{L^{2}}^{2} + B_{0}, 
\end{equation}
where $A_{0} := \frac{A_{opt}}{2} \|u_{0}\|_{L^{2}}^{2}$ and $B_{0} : = 2 B \|u_{0}\|_{L^{2}}^{2} E(u_{0}) $. If we  impose a smallness assumption on the $L^{2}$-norm, namely
\[
A_{0} < 1 \Longleftrightarrow  \|u_{0}\|_{L^{2}} <  \sqrt{\frac{2}{A_{opt}}}, 
\] 
 we can get an $H^1$ {\em a priori} estimate from \eqref{EQ77} and consequently  global well-posedness jn $H^{1}$ for the NLS equation in the focusing case. For a detailed information of this sort of phenomena in $\R^d$ we refer to Theorem 6.2 in \cite{LP-15}.
 
 The discussion above  reveals
a novel phenomena regarding  the global well-posedness of the IVP \eqref{SDE12} in the focusing case.  
The  structure of the nonlinear term of the integro-differential equation  $(\ref{IntegroDifferentialFormulation})$ allowed us
 to obtain the same global well-posedness result for the SD system in the both defocusing and focusing cases without any
smallness hypothesis on the data. This subtle difference between the NLS equation  and the SD system $(\ref{SDE12})$
occurs because the (coupled) evolution of $v(t, \cdot)$ in  \eqref{IntegroDifferentialFormulation} permits to derive an {\em a priori}
estimate for the $H^{1}$ norm of $u$, although we do not have other known conserved quantities (other than the $L^{2}$-norm).
\end{obs}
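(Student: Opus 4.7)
The statement is a comparative remark contrasting the $H^1$ global theory for the cubic focusing NLS on $M^2$ with the global result for the Schr\"odinger-Debye system obtained in Theorem \ref{TeorIntro7}. My plan is to verify the two computational claims it contains: first, that the focusing NLS requires an $L^2$ smallness assumption to close an $H^1$ a priori estimate via the sharp Gagliardo-Nirenberg inequality; second, that no such smallness is needed for the SD system thanks to the coupling with $v$.

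First, I would recall the standard derivation of the energy identity for \eqref{3FocNLS}: multiplying by $\overline{\partial_t u}$, integrating on $M^2$, taking real parts and integrating by parts (using $\partial M = \emptyset$) gives $\frac{d}{dt}E(u(t)) = 0$ with $E$ as in \eqref{HamDim2}, so $E(u(t)) = E(u_0)$ for all times the solution exists. From this, in the defocusing case $+|u|^2 u$ the two terms of $E$ have the same sign and immediately produce $\|\nabla u(t)\|_{L^2}^2 \leq 2E(u_0)$, which together with $L^2$-conservation controls $\|u(t)\|_{H^1}$ and, via the blow-up alternative from Theorem \ref{TeorIntro5}, yields global existence. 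In the focusing case, rearranging \eqref{HamDim2} gives \eqref{EQ76}, and the obstruction is that the $L^4$-term now appears with the good sign on the wrong side: without further control it cannot be absorbed by the left-hand side.

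Second, to convert \eqref{EQ76} into a useful bound I apply Proposition \ref{GNM} with $d=2$, $p=q=2$, $r=4$, which gives $\theta=\tfrac12$ and the estimate \eqref{EQ35}. Substituting this into \eqref{EQ76} and using $L^2$ conservation to replace $\|u(t)\|_{L^2}$ by $\|u_0\|_{L^2}$ produces exactly
\[
\|\nabla u(t)\|_{L^{2}}^{2} \leq \tfrac{A_{opt}}{2}\|u_0\|_{L^{2}}^{2}\|\nabla u(t)\|_{L^{2}}^{2} + 2B\|u_0\|_{L^{2}}^{2} E(u_0) + 2E(u_0),
\]
which is \eqref{EQ77} with the constants $A_0, B_0$ as stated. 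The key observation is that this inequality can be solved for $\|\nabla u(t)\|_{L^2}^2$ only if $A_0 < 1$, which is equivalent to the sharp threshold $\|u_0\|_{L^2} < \sqrt{2/A_{opt}}$; under this smallness condition one gets an $a\ priori$ bound $\|\nabla u(t)\|_{L^2}^2 \leq B_0/(1-A_0)$ and hence global well-posedness by the blow-up alternative. I would also point out why $E(u_0) > 0$ is not needed: $B_0$ can be absorbed into a slightly larger constant since we are really bounding $\|u\|_{L^4}^4$, which is manifestly nonnegative.

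Finally, I would contrast this with the SD system: the a priori estimate \eqref{PrioriEstimate} obtained in Proposition \ref{PropAPrioriEstimate} has the form $\|u(t)\|_{H^1}^2 \lesssim \|u_0\|_{H^1}^2 + \|u_0\|_{L^2}^2 \|v_0\|_{L^2}^2$ with no coefficient in front of $\|u(t)\|_{H^1}^2$ that must be made less than one. Tracing back the proof of Proposition \ref{PropAPrioriEstimate}, this happens because the $L^4$-term in \eqref{EQ47} is integrated in time against the small factor $T/\kappa$ and, combined with the identity \eqref{TimeDerivative} where the quartic term is paired with the coupled evolution of $v$, it can always be absorbed for $T$ sufficiently small in a way that is uniform over time steps. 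I expect the main subtlety to be stating the sharp form of Gagliardo-Nirenberg on a compact manifold carefully enough that the constant $A_{opt}$ really coincides with the one appearing in \eqref{EQ35}, and keeping track of the fact that \eqref{EQ77} is a self-improving inequality only under the threshold condition; this is a purely algebraic point but easy to misstate.
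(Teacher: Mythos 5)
Your proposal reproduces the paper's own reasoning for this remark essentially verbatim: energy conservation for \eqref{3FocNLS}, the rearrangement \eqref{EQ76}, the sharp Gagliardo--Nirenberg bound \eqref{EQ35} with $d=2$, $p=q=2$, $r=4$, the absorption threshold $A_{0}<1$, and the contrast with Proposition \ref{PropAPrioriEstimate} where no such smallness is needed; this is the same approach and it is correct in substance. One small point: substituting \eqref{EQ35} into \eqref{EQ76} actually yields the constant term $\tfrac{B}{2}\|u_{0}\|_{L^{2}}^{4}+2E(u_{0})$ rather than $2B\|u_{0}\|_{L^{2}}^{2}E(u_{0})$ (a slip you inherit from the paper's stated $B_{0}$), but this does not affect the conclusion since only the condition $A_{0}<1$ matters for closing the estimate.
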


\subsection{Growth estimate of the $H^{1}$-solution in the case $\lambda = 1$ and  $v_{0} \geq 0$.}\label{sub-sec4.3}
Suppose $\lambda = 1$ , $v_{0} \geq 0$ and $\kappa =1 $ (to simplify). So, $v(t)\geq 0$ and in this case,
 the estimate   $(\ref{EQ30})$ is reduced to
\begin{equation}\label{EQ71}
\|u(t)\|_{H^{1}}^{2}  \leq \|u_{0}\|_{H^{1}}^{2} + \int_{M} |u_{0}|^{2} v_{0} + \int_{0}^{t} \|u(t')\|_{L^{4}}^{4} d t'.
\end{equation}

Applying the estimate $(\ref{EQ36})$  we obtain
\begin{equation}\label{EQ72}
\|u(t)\|_{H^{1}}^{2}  \leq \|u_{0}\|_{H^{1}}^{2} +\|u_{0}\|_{L^{4}}^{2} \|v_{0}\|_{L^{2}} + \int_{0}^{t}  C_{A_{opt}, B} \|u_{0}\|_{L^{2}}^{2}\|u(t')\|_{H^{1}}^{2} d t'.
\end{equation}
Also using $(\ref{EQ36})$, followed by Young's inequality, we have
\begin{equation}\label{EQ73}
\|u_{0}\|_{L^{4}}^{2} \|v_{0}\|_{L^{2}}   \leq \frac{ C_{A_{opt}, B} \|u_{0}\|_{L^{2}}^{2}}{2} \|v_{0}\|_{L^{2}} + \frac{\|u_{0}\|_{H^{1}}^{2}}{2}  \|v_{0}\|_{L^{2}}.
\end{equation}

Substituting \eqref{EQ73} in \eqref{EQ72}, we get 
\begin{equation}\label{EQ74}
\|u(t)\|_{H^{1}}^{2}  \leq C_{0} + \int_{0}^{t} A_{0}\|u(t')\|_{H^{1}}^{2} d t',
\end{equation}
where
$$
A_{0} : = C_{A_{opt}, B} \|u_{0}\|_{L^{2}}^{2},\; {\mathrm{and}}\; C_{0} :=\|u_{0}\|_{H^{1}}^{2} +  \frac{A_{0}}{2}\|v_{0}\|_{L^{2}}+ \frac{\|u_{0}\|_{H^{1}}^{2}}{2}  \|v_{0}\|_{L^{2}}.
$$

Applying the Gronwall's inequality in $(\ref{EQ74})$ we obtain that for  $0 \leq t < T^{\ast}$
\begin{equation}\label{EQ75}
\|u(t)\|_{H^{1}}^{2}  \leq C_{0} \cdot e^{\int_{0}^{t} A_{0} dt'} = C_{0} \cdot e^{A_{0} t}.
\end{equation}
This shows that the growth of the $H^{1}$ -solution in the case  $\lambda = 1$ and  $v_{0} \geq 0$ is at  most exponential.

\section{Concluding remarks}\label{sec-5}
The additional $L^{\infty}$ condition used on the initial data $v_0$ to obtain local well-posedness result is of technical character. If   working on $\R^d$, one  can take advantage of  the Strichartz estimate without loss. More explicitly, 
for the solutions $u$ of the nonlinear Schrödinger equation
\[
i \partial_{t} u + \Delta_{\mathbb{R}^{d}} u = F(u); \qquad u(0, x) = u_{0},
\]
 one has the following   estimate in the Duhamel's formula
\[
\|u\|_{L^{p_{1}}_{T} L_{x}^{q_{1}}} \lesssim \|u_0\|_{L^{2}} + \|F(u)\|_{L^{p_{2}'}_{T} L_{x}^{q_{2}'}}
\]
where $(p_{1}, q_{1})$ and $(p_{2}, q_{2})$ are admissible pairs,
$1/p_{2} + 1/p_{2}' = 1$ and $1/q_{1} + 1/q_{2}' = 1$.  In this framework, if we work  with the nonlinear term $G$ such as in \eqref{Wintegral}
rather than $F$, the structure of the  mixed Lebesgue norm allows to deal better with the nonlinear term $e^{-t/\kappa }u(t,x) v_{0}(x)$ which leads to remove 
the extra condition on the initial data $v_{0}$.

 If  $M = \T^d$  one may take advantage of the expansion in the Fourier series. In fact  $u \in C_0^{\infty}(\mathbb{R} \times \mathbb{T}^{d})$ can be written as 
 \[
u(t, x) = \sum_{m \in \mathbb{Z}^{d}} e^{2 \pi i x \cdot m} \int_{\mathbb{R}} \widehat{u}(m, \tau) e^{ 2 \pi i t \tau} d \tau,
\] 
where the symbol \, $\widehat{ \cdot}$  \, means  the Fourier transform taken in both  space and time variables.  This property was nicely explored in \cite{BOURG93A} to obtain crucial estimates in the NLS case.

When $(M, g)$ is a general compact Riemannian manifold the situation is the following. As noted in \cite{BGT}, on  $M$ a
natural generalization of Fourier series expansions is of course the spectral decomposition
of $-\Delta_g$. More explicitly, if  
 $u \in C_0^{\infty}(\mathbb{R} \times M^{d})$ we have an analogue of the Fourier expansion  given by 
\[
u(t, x) = \sum_{\lambda \in Spec(-\Delta_{g})} e_{\lambda}(x) \int_{\mathbb{R}} \widehat{P_{\lambda} u}(y, \tau) e^{2 \pi i t \tau} d\tau,
\]
where $e_{\lambda}$ are the eigenfunctions of $- \Delta_{g}$ given by the relation $- \Delta_{g} e_{\lambda} = \lambda e_{\lambda} $ and $P_{\lambda}$ is the spectral projection onto the eigenspace related to  the eigenvalue $\lambda$. Therefore, the space-time Fourier transform  $\widehat{u}(m, \tau)$, in the general context, is replaced by   $P_{\lambda} \widehat{u}(\cdot, \tau)$ which behaves badly in the convolution product.

 Besides, the spectrum and  eigenfunctions of the Laplace-Beltrami operator on arbitrary Riemannian manifolds are not well known in order  to adapt Bourgain's method. However, on compact surfaces like Zoll manifolds one has better knowledge of spectrum and estimates involving eigenfunctions  which is crucial to adapt Bourgain's method. These properties are used very nicely in the case of the NLS equation (see for example \cite{JIANG2011,HANI2012,BGT2004} and references therein). Motivated by these works, we are  adapting Bourgain's space framework to address  the well-posedness issues for the IVP associated to the SD system \eqref{SDE12} posed on compact surfaces like Zoll manifolds. Work in this direction  is in progress.

% -----------------------------------------------------------

\subsection*{Acknowledgment}
The authors would like to thank Prof. Adan Corcho from Federal University of Rio de Janeiro and Prof. Ademir Pastor from University of Campinas for many helpful conversations.

% -----------------------------------------------------------

\bibliographystyle{plain}

\end{document}